\newcommand{\Div}{{\rm Div\,}}
\newcommand{\om}{\omega}
\newcommand{\pa}{\partial}
\newcommand{\ov}{\overline}
\newcommand{\I}{{\rm Im}}
\newcommand{\Rt}{{\rm Re}}
\newcommand{\curl}{{\rm curl\,}}
\newcommand{\dive}{{\rm div\,}}
\newcommand{\wid}{\widetilde}
\newcommand{\na}{\nabla}
\newcommand{\mat}{\mathbb}
\newcommand{\R}{{\mat R}}
\newcommand{\C}{{\mat C}}
\newcommand{\Sp}{{\mat S}}
\newcommand{\be}{\begin{eqnarray}}
\newcommand{\ben}{\begin{eqnarray*}}
\newcommand{\en}{\end{eqnarray}}
\newcommand{\enn}{\end{eqnarray*}}
\newtheorem{remark}[theorem]{Remark}
\begin{document}
\renewcommand{\theequation}{\arabic{section}.\arabic{equation}}

\title{\bf elastic scattering by locally rough interfaces}
\author{
Chengyu Wu\thanks{School of Mathematics and Statistics, Xi'an Jiaotong University,
	Xi'an 710049, Shaanxi, China ({\tt wucy99@stu.xjtu.edu.cn})}
\and
Yushan Xue\thanks{School of Statistics and Mathematics, Central University of Finance and Economics, China ({\tt cnxueyushan@163.com})}
\and
Jiaqing Yang\thanks{School of Mathematics and Statistics, Xi'an Jiaotong University,
	Xi'an 710049, Shaanxi, China ({\tt jiaq.yang@mail.xjtu.edu.cn})}
 }
\date{}
\maketitle

%\vspace{.2in}

\begin{abstract}
  In this paper, we present the first well-posedness result for elastic scattering by locally rough interfaces in both two and three dimensions. Inspired by the Helmholtz decomposition, we discover a fundamental identity for the stress vector, revealing an intrinsic relationship among the generalized stress vector, the Lamé constants and certain tangential differential operators. This identity leads to two key limits for surface integrals involving scattered solutions, from which we deduce the first uniqueness result of direct problem for all frequencies. Through a detailed analysis, applying the steepest descent method, subsequently we derive the existence and uniqueness of the corresponding two-layered Green's tensor along with its explicit expression when the transmission coefficient equals 1. Finally, by leveraging properties of the Green's tensor, we establish the existence of solutions via the variational method and the boundary integral equation, thereby achieving the first well-posedness result for elastic scatteing by rough interfaces. 
\end{abstract}

\begin{keywords}
elastic scattering, locally rough interfaces, two-layered Green's tensor, steepest descent method, well-posedness. 
\end{keywords}

\begin{AMS}
35J08, 35B40, 74B05. 
\end{AMS}

\pagestyle{myheadings}
\thispagestyle{plain}
\markboth{C. Wu, Y. Xue and J. Yang}{Elastic scattering by locally rough interfaces}

\section{Introduction}\label{sec1}
\setcounter{equation}{0}
The present paper concerns the time-harmonic elastic scattering by a penetrable locally rough surface in both two and three dimensions. Such problems can find substantial applications in various fields, such as geophysics, ocean acoustics and seismology. 

Let the scattered interface be denoted by $\Gamma:=\{(x',x_d)\in \R^d:x_d=f(x')\}$ $(d=2,3)$, where $f$ is assumed to be a Lipschitz continuous function with compact support. This means that $\Gamma$ is just a local perturbation of the plannar interface $\Gamma_0:=\{(x',x_d)\in \R^d:x_d=0\}$. The whole space $\R^d$ is then separated by $\Gamma$ into the upper half-space $D^+:=\{(x',x_d)\in \R^d:x_d>f(x')\}$ and the lower half-space $D^-:=\{(x',x_d)\in \R^d:x_d<f(x')\}$. Let $B_r(x)$ denote the open ball centered at $x\in\R^d$ with radius $r>0$. For balls centered at the origin, we abbreviate by $B_r$. 

Suppose that the whole space $\R^d$ is fullfilled with a piecewise isotropic and homogeneous elastic medium characterized by the Lamé constants $\lambda,\mu$ and the mass density $\rho$ satisfying that $\mu>0,d\lambda+2\mu>0$ and $\rho=\rho_\pm>0$ in $D^\pm$ with $\rho_+\neq\rho_-$. Given an incident field $\mathbf{u}^{in}$, consider the time-harmonic elastic scattering by the locally rough interface $\Gamma$, which is governed by the Navier equation 
\begin{align}\label{1.1}
  \Delta^*\mathbf{u}_\pm+\rho_\pm\om^2\mathbf{u}_\pm=0~~~\text{in}~D^\pm, 
\end{align}
where $\Delta^*:=\mu\Delta+(\lambda+\mu)\grad\dive$, $\om>0$ is the angular frequency, $\mathbf{u}_+$ is the scattered field in $D^+$ and $\mathbf{u}_-$ is the transmitted field in $D^-$. 

In order to state the boundary conditions for $\mathbf{u}_\pm$ at the interface $\Gamma$, we introduce the following notations. In two dimensions, for $\mathbf{y}=(y_1,y_2)^\top\in\R^2$, define $\mathbf{y}^\perp:=(y_2,-y_1)^\top$. Further, we will make use of the differential operators 
$$
  \grad^\perp u:=(\pa_2u,-\pa_1u)^\top~~~~\text{and}~~~~\dive^\perp\mathbf{u}:=\pa_2u_1-\pa_1u_2. 
$$
Denote by $\boldsymbol{\nu}$ the unit normal vector pointing out of $D^+$ and let $\boldsymbol{\tau}=\boldsymbol{\nu}^\perp$ be the unit tangential vector if $d=2$. The generalizd stress vector on $\Gamma$ is then defined by 
\begin{align*}
  \mathbf{P}_{\tilde\mu,\tilde\lambda}\mathbf{u}:=
  \left\{
  \begin{array}{ll}
  	(\mu+\tilde{\mu})\pa_{\boldsymbol{\nu}}\mathbf{u}+\tilde{\lambda}\boldsymbol{\nu}\dive\mathbf{u}-\tilde{\mu}\boldsymbol{\tau}\dive^\perp\mathbf{u},~~~&{\rm if}~d=2,\\
  	(\mu+\tilde{\mu})\pa_{\boldsymbol{\nu}}\mathbf{u}+\tilde{\lambda}\boldsymbol{\nu}\dive\mathbf{u}+\tilde{\mu}\boldsymbol{\nu}\times\curl\mathbf{u},~~~&{\rm if}~d=3, 
  \end{array}
  \right.
\end{align*}
where $\tilde{\mu},\tilde{\lambda}$ are real numbers satisfying $\tilde{\mu}+\tilde{\lambda}=\mu+\lambda$. Note that $\mathbf{P}_{\tilde\mu,\tilde\lambda}\mathbf{u}$ is equal to the physical stress vector $\mathbf{T}\mathbf{u}$ for the choice $\tilde{\mu}=\mu$ and $\tilde{\lambda}=\lambda$. The transmission boundary conditions for $\mathbf{u}_\pm$ are then given by 
\begin{align}\label{1.2}
  \mathbf{u}_+-\mathbf{u}_-=-\mathbf{u}^{in}~~~~\text{and}~~~~\mathbf{P}_{\tilde\mu,\tilde\lambda}\mathbf{u}_+-a_0\mathbf{P}_{\tilde\mu,\tilde\lambda}\mathbf{u}_-=-\mathbf{P}_{\tilde\mu,\tilde\lambda}\mathbf{u}^{in}~~~\text{on}~\Gamma, 
\end{align}
where the transmission coefficient $a_0>0$. 

To assure the uniqueness of the problem \eqref{1.1}--\eqref{1.2}, it is necessary to impose radiation conditions on the scattered field $\mathbf{u}_+$ and the transmitted field $\mathbf{u}_-$ at infinity. To this end, we note that the solutions $\mathbf{u}_\pm$ to the Navier equation admit the Helmholtz decomposition 
\begin{align*}
  \mathbf{u}_\pm=
  \left\{
  \begin{array}{ll}
  	\displaystyle-k_{p,\pm}^{-2}\grad\phi_{p,\pm}-k_{s,\pm}^{-2}\grad^\perp\phi_{s,\pm},~~~&{\rm if}~d=2,\\ [1mm]
  	\displaystyle-k_{p,\pm}^{-2}\grad\phi_{p,\pm}+k_{s,\pm}^{-2}\curl\boldsymbol{\phi}_{s,\pm},~~~&{\rm if}~d=3, 
  \end{array}
  \right.
\end{align*}
where the wave numbers $k_{p,\pm}$ and $k_{s,\pm}$ satisfy 
$$
  k_{p,\pm}^2=\frac{\rho_\pm\om^2}{2\mu+\lambda},~~~k_{s,\pm}^2=\frac{\rho_\pm\om^2}{\mu}, 
$$
and the compressional part $\phi_{p,\pm}:=\dive \mathbf{u}_\pm$, the shear part $\phi_{s,\pm}:=\dive^\perp\mathbf{u}_\pm, \boldsymbol{\phi}_{s,\pm}:=\curl\mathbf{u}_\pm$ satisfy 
$$
  \Delta\phi_{a,\pm}+k_{a,\pm}^2\phi_{a,\pm}=0,~a=p,s,~\text{and}~\curl\curl\boldsymbol{\phi}_{s,\pm}-k_{s,\pm}^2\boldsymbol{\phi}_{s,\pm}=0. 
$$
In view of the above equations, we require the following Kupradze radiation conditions on the problem \eqref{1.1}--\eqref{1.2}, 
\begin{subequations}\label{1.3}
	\begin{align}\label{1.3a}
	&d=2:~\pa_r\phi_{a,\pm}-ik_{a,\pm}\phi_{a,\pm}=o(r^{-1/2}),~a=p,s,~ \\
	\label{1.3b}
	&d=3:~\pa_r\phi_{p,\pm}-ik_{p,\pm}\phi_{p,\pm}=o(r^{-1}),~\curl\boldsymbol{\phi}_{s,\pm}\times\mathbf{\hat x}-ik_{s,\pm}\boldsymbol{\phi}_{s,\pm}=o(r^{-1}), ~
	\end{align}
\end{subequations}
uniformly in $\mathbf{\hat x}:=\mathbf{x}/r\in \Sp^{d-1}_\pm$ as $r:=|\mathbf{x}|\rightarrow\infty$, where
$\Sp^{d-1}_\pm:=\Sp^{d-1}\cap\R^d_\pm$ and $\R^d_\pm:=\{(x',x_d)\in \R^d:x_d\gtrless0\}$. Clearly, \eqref{1.3} is a combination of the classical Sommerfeld and Silver-M\"{u}ller radiation condition in the acoustic and electromagnetic scattering (c.f. \cite{FD06,DR13}). 

The elastic scattering problems involving rough surfaces have attracted considerable attention over the last twenty years. We refer to \cite{TA99a,TA99,MIJ11,JG10,GXY18} for periodic structures and locally rough surfaces, and \cite{TA01,TA02,JG12} for general global rough surfaces. However, the majority of the preceding work were only concerning the half-plane problem, while significantly less work existed for the whole space problem, which requires a substantially more elaborate analysis due to the presence of the transmitted wave and the complexity of the stress vector. Therefore, establishing a well-posedness theory for the elastic scattering by general rough interfaces continues to be an open problem. The primary difficulties lie in proving the uniqueness of the solution, a cornerstone of the scattering theory. For bounded scatterers, the Kupradze radiation condition was shown to be equivalent to an radiation condition about the surface integral between scattered solutions, which is easy to be applied for the uniqueness of solutions. Nevertheless, for locally rough surfaces, such equivalence has not been discovered yet, and thus it is hard to utilize the half-plane Kupradze radiation condition when considering the uniqueness of solution. Consequently, the uniqueness results for locally rough surfaces remain distinctly blank till now. 

Inspried by the Helmholtz decomposition, in this paper we discover a fundamental identity for the stress vector, which reveals an intrinsic relationship among the genearlized stress vector, the Lamé constants and certain tangential differential operators. This identity leads to two key limits for surface integrals concerning scattered solutions, which are instrumental in deducing both the uniqueness of solution and its Green's representation. Further, they in some extents indicate the equivalence between two aforementioned radiation conditions in the locally rough surface case. Utilizing these limits, and the uniqueness results in the acoustic and electromagnetic scattering, we prove the first uniqueness theorem of problem \eqref{1.1}--\eqref{1.3} for all frequencies. Furthermore, this framework enables the systematic extension of substantial uniqueness results from bounded elastic scatterers to locally rough surfaces.. 

Beyong its impressive application to rough surfaces, our utilization for the stress vector identity presents a fundamental contribution to the elastic scattering. Since in the definition of the genearlized stress vector, the dependence on the constants $\tilde\lambda$ and $\tilde\mu$ is rather vague, considerable existing works only hold under some constraints on the value of $\tilde\lambda$ and $\tilde\mu$. In contrast, this identity illustrate explicitly the relationship between the genearlized stress vector and $\tilde\lambda, \tilde\mu$. It appears that the term involving $\tilde\lambda, \tilde\mu$ corresponds to some tangential differential operators, and we show that they would vanish when considering the surface integral. This profound insight allows for the removal of these constraints in a wide range of previous results, significantly enhancing their generality and applicability. Moreover, utilizing this identity, it is possible to completely decouple the elastic waves to compressional and shear waves, thus simplifying the study of Lamé system to the investigation of two vectorial Helmholtz systems. 

The second part of this paper is about the two-layered Green's tensor for problem \eqref{1.1}--\eqref{1.3} with $a_0=1$. The two-layered Green's function in the acoustic scattering, which plays a pivotal role in both direct and inverse scattering in two-layered media, has been rigorously studied in \cite{HED05,NB84,LJ24,CP17}. Nevertheless, since the Navier equation has a more complex form than the Helmholtz equation and there is little work concerning the direct problem, to the best of the author's knowledge, there is no rigorous mathematical study about the two-layered elastic Green's tensor till now. In this paper, employing the aforementioned identity about the stress vector, we derive the existence and uniqueness of the corresponding two-layered Green's tensor. Using the Fourier transform and its inverse, we deduce the explicit expression of the Green's tensor in both two and three dimensions by solving several complicated systems of linear equations. Subsequently, following the arguments in \cite{HED05,NB84,LJ24,CP17}, utilizing the steepest descent method, we obtain the asymptotic behaviors of the corresponding expressions in two dimensions, while in three dimensions the analysis is reduced to the two-dimensional case after changing the Fourier transform to the Hankel transform and the asymptotic behaviors follows, which in turn implies the existence and uniqueness of the Green's tensor in both two and three dimensions. 

Finally, similarily as in \cite{GGT18,GXY18}, applying the properties of the two-layered Green's tensor, we propose an equivalent variational formulation to problem \eqref{1.1}--\eqref{1.3} with the Dirichlet-to-Neumann map derived from the integral representation of the scattered field. The variational formulation is then shown to be of Fredholm type, and thus the existence of solutions follows from the uniqueness, which achieves the first well-posedness result for elastic scattering by penetrable rough surfaces at all frequencies. 

The paper is organized as follows. In section \ref{sec2}, we first prove an identity for the stress vector, which implies two significant limits about the surface integral. Applying these limits, we obtain the uniqueness of solutions to problem \eqref{1.1}--\eqref{1.3} for all frequencies. In section \ref{sec3}, we derive the existence and uniqueness of the two-layered Green's tensor for problem \eqref{1.1}--\eqref{1.3} with $a_0=1$ by computing its explicit form and verifying that it satisfies the radiation condition \eqref{1.3}. In section \ref{sec4}, we prove the existence of solutions via the variational method and the boundary integral equaiton.

\section{Uniqueness of solution}\label{sec2}
\setcounter{equation}{0}
In this section, we shall derive the uniqueness result for problem \eqref{1.1}--\eqref{1.3}, which is based on an identity for the stress vector and two significant limits of the surface integral. 

\begin{lemma}\label{lem2.1}
	The following identities hold 
	\begin{align*}
	\mathbf{P}_{\tilde\mu,\tilde\lambda}\mathbf{u}=
	\left\{
	\begin{array}{ll}
		(\mu+\tilde{\mu})(\pa_{\boldsymbol{\tau}}\mathbf{u})^\perp+(2\mu+\lambda)\boldsymbol{\nu}\dive\mathbf{u}+\mu\boldsymbol{\tau}\dive^\perp\mathbf{u},~~~&{\rm if}~d=2,\\
		(\mu+\tilde{\mu})\mathcal{M}_{\boldsymbol{\nu}}(\mathbf{u})+(2\mu+\lambda)\boldsymbol{\nu}\dive\mathbf{u}-\mu\boldsymbol{\nu}\times\curl\mathbf{u},~~~&{\rm if}~d=3, 
	\end{array}
	\right.
	\end{align*}
	where $\mathcal{M}_{\boldsymbol{\nu}}$ is the differential operator defined by 
	\begin{align*}
	  \mathcal{M}_{\boldsymbol{\nu}}(\mathbf{u}):=
	  \left(
	  \begin{array}{l}
	  	\nu_2\pa_1u_2-\nu_1\pa_2u_2+\nu_3\pa_1u_3-\nu_1\pa_3u_3 \\
	  	\nu_1\pa_2u_1-\nu_2\pa_1u_1+\nu_3\pa_2u_3-\nu_2\pa_3u_3 \\
	  	\nu_1\pa_3u_1-\nu_3\pa_1u_1+\nu_2\pa_3u_2-\nu_3\pa_2u_2
	  \end{array}
	  \right)
	\end{align*}
	for $\mathbf{u}=(u_1,u_2,u_3)^\top$ and $\boldsymbol{\nu}=(\nu_1,\nu_2,\nu_3)^\top$. 
\end{lemma}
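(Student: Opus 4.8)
The plan is to verify the claimed identities by purely algebraic manipulation, rewriting each term in the definition of $\mathbf{P}_{\tilde\mu,\tilde\lambda}\mathbf{u}$ in terms of the quantities appearing on the right-hand side. Recall that the definition only involves $\tilde\mu$ through the combination $(\mu+\tilde\mu)\pa_{\boldsymbol\nu}\mathbf{u}$, a term $\tilde\lambda\boldsymbol\nu\dive\mathbf{u}$, and a term $-\tilde\mu\boldsymbol\tau\dive^\perp\mathbf{u}$ (resp. $+\tilde\mu\boldsymbol\nu\times\curl\mathbf{u}$) in $d=2$ (resp. $d=3$); the constraint $\tilde\mu+\tilde\lambda=\mu+\lambda$ is what will allow the $\tilde\mu$-dependence to reorganize into a normal-derivative-free combination. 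The first step is therefore to treat $d=2$: I would express the normal derivative $\pa_{\boldsymbol\nu}\mathbf{u}$ using the orthonormal frame $\{\boldsymbol\nu,\boldsymbol\tau\}$, writing $\na u_j = (\pa_{\boldsymbol\nu}u_j)\boldsymbol\nu + (\pa_{\boldsymbol\tau}u_j)\boldsymbol\tau$ for each component, and then decompose $\dive\mathbf{u}$ and $\dive^\perp\mathbf{u}$ in the same frame. The key computational identity to establish is the pointwise vector relation
\begin{align*}
  \pa_{\boldsymbol\nu}\mathbf{u} = (\pa_{\boldsymbol\tau}\mathbf{u})^\perp + \boldsymbol\nu\,\dive\mathbf{u} - \boldsymbol\tau\,\dive^\perp\mathbf{u},
\end{align*}
which one checks by expanding both sides in coordinates using $\boldsymbol\nu=(\nu_1,\nu_2)^\top$, $\boldsymbol\tau=(\nu_2,-\nu_1)^\top$ and $\nu_1^2+\nu_2^2=1$. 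Granting this, substituting into $(\mu+\tilde\mu)\pa_{\boldsymbol\nu}\mathbf{u}+\tilde\lambda\boldsymbol\nu\dive\mathbf{u}-\tilde\mu\boldsymbol\tau\dive^\perp\mathbf{u}$ and collecting the coefficients of $\boldsymbol\nu\dive\mathbf{u}$ and $\boldsymbol\tau\dive^\perp\mathbf{u}$ gives $(\mu+\tilde\mu+\tilde\lambda)\boldsymbol\nu\dive\mathbf{u} = (2\mu+\lambda)\boldsymbol\nu\dive\mathbf{u}$ and $(-(\mu+\tilde\mu)-\tilde\mu)\,\cdots$ — here I must be careful with signs, and the constraint $\tilde\mu+\tilde\lambda=\mu+\lambda$ is exactly what makes the $\tilde\mu$'s cancel in the $\dive$ term; the $\dive^\perp$ coefficient should collapse to $\mu$ after using the same constraint, yielding the stated two-dimensional formula.

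For $d=3$ the strategy is identical but with the orthonormal frame replaced by the normal $\boldsymbol\nu$ together with the two-dimensional tangent plane, and the role of $(\pa_{\boldsymbol\tau}\mathbf{u})^\perp$ played by the operator $\mathcal{M}_{\boldsymbol\nu}$. The crucial pointwise identity to verify is
\begin{align*}
  \pa_{\boldsymbol\nu}\mathbf{u} = \mathcal{M}_{\boldsymbol\nu}(\mathbf{u}) + \boldsymbol\nu\,\dive\mathbf{u} + \boldsymbol\nu\times\curl\mathbf{u},
\end{align*}
which one proves componentwise: using $|\boldsymbol\nu|=1$, the $j$-th component of $\boldsymbol\nu\dive\mathbf{u}+\boldsymbol\nu\times\curl\mathbf{u}$ can be rearranged (this is essentially the vector identity $\na(\boldsymbol\nu\cdot\mathbf{u})$-type expansion, or equivalently $\boldsymbol\nu\times\curl\mathbf{u} = \na(\boldsymbol\nu\cdot\mathbf{u}) - (\boldsymbol\nu\cdot\na)\mathbf{u}$ applied with $\boldsymbol\nu$ locally constant is \emph{not} quite right since $\boldsymbol\nu$ varies, so instead one simply expands $(\boldsymbol\nu\times\curl\mathbf{u})_1 = \nu_2(\pa_1u_2-\pa_2u_1) - \nu_3(\pa_3u_1-\pa_1u_3)$ directly) into $\nu_1\pa_1u_1 + \nu_2\pa_1u_2 + \nu_3\pa_1u_3 - [\nu_2\pa_2u_1 + \nu_3\pa_3u_1] + \nu_1(\pa_2u_2+\pa_3u_3)\cdot 0$-corrections, and one sees the first three terms reconstruct $\pa_{\boldsymbol\nu}u_1$ after adding $\nu_1^2\pa_1u_1$ via $\nu_1^2+\nu_2^2+\nu_3^2=1$, while the remaining terms assemble into the first component of $\mathcal{M}_{\boldsymbol\nu}(\mathbf{u})$ with the correct signs. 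Substituting this into the definition of $\mathbf{P}_{\tilde\mu,\tilde\lambda}\mathbf{u}$ for $d=3$ and again invoking $\tilde\mu+\tilde\lambda=\mu+\lambda$ makes the $\tilde\mu$-dependence in the $\boldsymbol\nu\dive\mathbf{u}$ term collapse to $(2\mu+\lambda)$ and the $\boldsymbol\nu\times\curl\mathbf{u}$ coefficient collapse to $-\mu$, giving the stated formula.

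The main obstacle is not conceptual but bookkeeping: getting every sign right in the coordinate expansion, particularly the sign conventions built into $\grad^\perp$, $\dive^\perp$, $\mathbf{y}^\perp$, and the orientation of $\boldsymbol\tau=\boldsymbol\nu^\perp$ in the two-dimensional case, and the interplay between the antisymmetric structure of $\curl$ and $\boldsymbol\nu\times(\cdot)$ together with the definition of $\mathcal{M}_{\boldsymbol\nu}$ in three dimensions. I would carry out the $d=2$ computation in full first as a template, then transcribe it to $d=3$ component by component, checking the first component of $\mathcal{M}_{\boldsymbol\nu}$ carefully and appealing to cyclic symmetry in the indices $(1,2,3)$ for the other two. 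A secondary, smaller point worth remarking is that the identity is purely pointwise and algebraic — it requires no differential equation satisfied by $\mathbf{u}$ and no smoothness beyond that needed to make $\pa_{\boldsymbol\nu}\mathbf{u}$ meaningful — so the whole proof is a finite computation with no analytic input.
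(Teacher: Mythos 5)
Your strategy is exactly the paper's: reduce the lemma to a pointwise identity expressing $\pa_{\boldsymbol\nu}\mathbf{u}$ in terms of $\boldsymbol\nu\dive\mathbf{u}$, the tangential/rotational terms, and $(\pa_{\boldsymbol\tau}\mathbf{u})^\perp$ (resp.\ $\mathcal{M}_{\boldsymbol\nu}(\mathbf{u})$), verify it by coordinate expansion, then substitute into the definition of $\mathbf{P}_{\tilde\mu,\tilde\lambda}\mathbf{u}$ and use $\tilde\mu+\tilde\lambda=\mu+\lambda$. However, both of your key identities are stated with the wrong sign, and with those signs the final bookkeeping does not close. The correct identities are
\begin{align*}
\pa_{\boldsymbol\nu}\mathbf{u}=(\pa_{\boldsymbol\tau}\mathbf{u})^\perp+\boldsymbol\nu\dive\mathbf{u}+\boldsymbol\tau\dive^\perp\mathbf{u}\quad(d=2),\qquad
\pa_{\boldsymbol\nu}\mathbf{u}=\mathcal{M}_{\boldsymbol\nu}(\mathbf{u})+\boldsymbol\nu\dive\mathbf{u}-\boldsymbol\nu\times\curl\mathbf{u}\quad(d=3),
\end{align*}
whereas you wrote $-\boldsymbol\tau\dive^\perp\mathbf{u}$ and $+\boldsymbol\nu\times\curl\mathbf{u}$. (Check the first component in $d=2$: $(\pa_{\boldsymbol\tau}\mathbf{u})^\perp_1+\nu_1\dive\mathbf{u}+\nu_2\dive^\perp\mathbf{u}=\nu_2\pa_1u_2-\nu_1\pa_2u_2+\nu_1\pa_1u_1+\nu_1\pa_2u_2+\nu_2\pa_2u_1-\nu_2\pa_1u_2=\nu_1\pa_1u_1+\nu_2\pa_2u_1$; the plus sign is forced.)

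These signs are not cosmetic: with your version, the coefficient of $\boldsymbol\tau\dive^\perp\mathbf{u}$ after substitution is $-(\mu+\tilde\mu)-\tilde\mu=-\mu-2\tilde\mu$, which you yourself write down, and no use of $\tilde\mu+\tilde\lambda=\mu+\lambda$ turns this into $+\mu$; the $\tilde\mu$-dependence does not cancel, so the claimed formula is not obtained. With the correct signs the coefficient is $(\mu+\tilde\mu)-\tilde\mu=\mu$ in $d=2$ and $-(\mu+\tilde\mu)+\tilde\mu=-\mu$ in $d=3$, and note that this cancellation needs no constraint at all — the relation $\tilde\mu+\tilde\lambda=\mu+\lambda$ is used only to turn the $\boldsymbol\nu\dive\mathbf{u}$ coefficient $\mu+\tilde\mu+\tilde\lambda$ into $2\mu+\lambda$. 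Once the signs are corrected, your argument coincides with the paper's proof.
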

\begin{proof}
	From the definition of the generalized stress vector $	\mathbf{P}_{\tilde\mu,\tilde\lambda}\mathbf{u}$, it suffices to show that 
	\begin{align*}
	  \pa_{\boldsymbol{\nu}}\mathbf{u}=
	  \left\{
	  \begin{array}{ll}
	  	(\pa_{\boldsymbol{\tau}}\mathbf{u})^\perp+\boldsymbol{\nu}\dive\mathbf{u}+\boldsymbol{\tau}\dive^\perp\mathbf{u},~~~&{\rm if}~d=2,\\
	  	\mathcal{M}_{\boldsymbol{\nu}}(\mathbf{u})+\boldsymbol{\nu}\dive\mathbf{u}-\boldsymbol{\nu}\times\curl\mathbf{u},~~~&{\rm if}~d=3. 
	  \end{array}
	  \right.
	\end{align*}
	Direct calculation yields that 
	\begin{align*}
	  \begin{aligned}
	  &\;(\pa_{\boldsymbol{\tau}}\mathbf{u})^\perp+\boldsymbol{\nu}\dive\mathbf{u}+\boldsymbol{\tau}\dive^\perp\mathbf{u} \\
	   =&\left(
	  \begin{array}{l}
	  	\nu_2\pa_1u_2-\nu_1\pa_2u_2+\nu_1(\pa_1u_1+\pa_2u_2)+\nu_2(\pa_2u_1-\pa_1u_2) \\
	  	-\nu_2\pa_1u_1+\nu_1\pa_2u_1+\nu_2(\pa_1u_1+\pa_2u_2)-\nu_1(\pa_2u_1-\pa_1u_2)
	  \end{array}
	  \right) \\
	  =&\left(
	  \begin{array}{l}
	  	\nu_1\pa_1u_1+\nu_2\pa_2u_1 \\
	  	\nu_1\pa_1u_2+\nu_2\pa_2u_2
	  \end{array}
	  \right)
	  =\pa_{\boldsymbol{\nu}}\mathbf{u}, 
	  \end{aligned}
	\end{align*}
	and 
	\begin{align*}
	\begin{aligned}
	  &\;\mathcal{M}_{\boldsymbol{\nu}}(\mathbf{u})+\boldsymbol{\nu}\dive\mathbf{u}-\boldsymbol{\nu}\times\curl\mathbf{u} \\
	  =&\left(
	  \begin{array}{l}
	  	\nu_2\pa_1u_2-\nu_1\pa_2u_2+\nu_3\pa_1u_3-\nu_1\pa_3u_3 \\
	  	\nu_1\pa_2u_1-\nu_2\pa_1u_1+\nu_3\pa_2u_3-\nu_2\pa_3u_3 \\
	  	\nu_1\pa_3u_1-\nu_3\pa_1u_1+\nu_2\pa_3u_2-\nu_3\pa_2u_2
	  \end{array}
	  \right)+
	  \left(
	  \begin{array}{l}
	  	\nu_1(\pa_1u_1+\pa_2u_2+\pa_3u_3) \\
	  	\nu_2(\pa_1u_1+\pa_2u_2+\pa_3u_3) \\ 
	  	\nu_3(\pa_1u_1+\pa_2u_2+\pa_3u_3) 
	  \end{array}
	  \right) \\
	   &\;-\left(
	  \begin{array}{l}
	  	\nu_2(\pa_1u_2-\pa_2u_1)-\nu_3(\pa_3u_1-\pa_1u_3) \\
	  	\nu_3(\pa_2u_3-\pa_3u_2)-\nu_1(\pa_1u_2-\pa_2u_1) \\
	  	\nu_1(\pa_3u_1-\pa_1u_3)-\nu_2(\pa_2u_3-\pa_3u_2) 
	  \end{array}
	  \right) \\
	  =&\left(
	  \begin{array}{l}
	  	\nu_1\pa_1u_1+\nu_2\pa_2u_1+\nu_3\pa_3u_1 \\
	  	\nu_1\pa_1u_2+\nu_2\pa_2u_2+\nu_3\pa_3u_2  \\
	  	\nu_1\pa_1u_3+\nu_2\pa_2u_3+\nu_3\pa_3u_3 
	  \end{array}
	  \right) 
	  =\pa_{\boldsymbol{\nu}}\mathbf{u}, 
	  \end{aligned}
	\end{align*}
	which are the desired identities. 
\end{proof}
\begin{remark}\label{rem2.2}
	We note that the three dimensional case has been presented in \cite[Chapter V]{VD79}, while the two dimensional case is first proposed here. Further, it is known that 
	\begin{align*}
	  \boldsymbol{\nu}\times\grad\mathbf{u}=
	  \left(
	  \begin{array}{ccc}
	  	\nu_2\pa_3u_1-\nu_3\pa_2u_1 & \nu_3\pa_1u_1-\nu_1\pa_3u_1 & \nu_1\pa_2u_1-\nu_2\pa_1u_1 \\
	  	\nu_2\pa_3u_2-\nu_3\pa_2u_2 & \nu_3\pa_1u_2-\nu_1\pa_3u_2 & \nu_1\pa_2u_2-\nu_2\pa_1u_2 \\
	  	\nu_2\pa_3u_3-\nu_3\pa_2u_3 & \nu_3\pa_1u_3-\nu_1\pa_3u_3 & \nu_1\pa_2u_3-\nu_2\pa_1u_3 
	  \end{array}
	  \right), 
	\end{align*}
	which implies that $\mathcal{M}_{\boldsymbol{\nu}}(\mathbf{u})$ depends fully on $\boldsymbol{\nu}\times\grad\mathbf{u}$. Therefore, it is deduced that for arbitrarily fixed $\tilde{\mu}$ and $\tilde{\lambda}$, if $\mathbf{u}=\mathbf{P}_{\tilde{\mu},\tilde{\lambda}}\mathbf{u}=0$ on $\Gamma$, then $\pa_{\boldsymbol{\nu}}\mathbf{u}=0$ on $\Gamma$ and hence $\grad\mathbf{u}=0$ on $\Gamma$. That is, problem \eqref{1.1}--\eqref{1.3} with $a_0=1$ is independent of the choice of $\tilde{\mu}$ and $\tilde{\lambda}$. 
\end{remark}
\begin{corollary}\label{cor2.3}
	Suppose $\mathbf{u}^{in}=0$ and $a_0=1$, then we have 
	\begin{align*}
	  &d=2:\left\{
	  \begin{array}{l}
	  	\phi_{p,+}=\phi_{p,-},~\phi_{s,+}=\phi_{s,-}, \\ [1mm]
	  	-k_{p,+}^{-2}\pa_{\boldsymbol{\nu}}\phi_{p,+}+k_{s,+}^{-2}\pa_{\boldsymbol{\tau}}\phi_{s,+}=-k_{p,-}^{-2}\pa_{\boldsymbol{\nu}}\phi_{p,-}+k_{s,-}^{-2}\pa_{\boldsymbol{\tau}}\phi_{s,-}, \\ [1mm]
	   k_{p,+}^{-2}\pa_{\boldsymbol{\tau}}\phi_{p,+}+k_{s,+}^{-2}\pa_{\boldsymbol{\nu}}\phi_{s,+}=k_{p,-}^{-2}\pa_{\boldsymbol{\tau}}\phi_{p,-}+k_{s,-}^{-2}\pa_{\boldsymbol{\nu}}\phi_{s,-},
	  \end{array}
	  \right. \\
	&d=3:\left\{
	\begin{array}{l}
		\phi_{p,+}=\phi_{p,-},~\boldsymbol{\nu}\times\boldsymbol{\phi}_{s,+}=\boldsymbol{\nu}\times\boldsymbol{\phi}_{s,-}, \\ [1mm]
		-k_{p,+}^{-2}\pa_{\boldsymbol{\nu}}\phi_{p,+}+k_{s,+}^{-2}\boldsymbol{\nu}\cdot\curl\boldsymbol{\phi}_{s,+}=-k_{p,-}^{-2}\pa_{\boldsymbol{\nu}}\phi_{p,-}+k_{s,-}^{-2}\boldsymbol{\nu}\cdot\curl\boldsymbol{\phi}_{s,-}, \\ [1mm]
		-k_{p,+}^{-2}\boldsymbol{\nu}\times\grad\phi_{p,+}+k_{s,+}^{-2}\boldsymbol{\nu}\times\curl\boldsymbol{\phi}_{s,+} \\ [1mm]
		=-k_{p,-}^{-2}\boldsymbol{\nu}\times\grad\phi_{p,-}+k_{s,-}^{-2}\boldsymbol{\nu}\times\curl\boldsymbol{\phi}_{s,-}, 
	\end{array}
	\right.
	\end{align*}
	on the boundary $\Gamma$. 
\end{corollary}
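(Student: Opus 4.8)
To prove Corollary~\ref{cor2.3}, the plan is to combine the transmission conditions \eqref{1.2}, specialized to $\mathbf{u}^{in}=0$ and $a_0=1$, with the algebraic identity of Lemma~\ref{lem2.1}, and then to substitute the Helmholtz decomposition of $\mathbf{u}_\pm$. Since the Lam\'e constants $\mu,\lambda$, and hence the operator $\mathbf{P}_{\tilde\mu,\tilde\lambda}$ (which depends only on $\mu,\tilde\mu,\tilde\lambda$ and the geometry of $\Gamma$), do not jump across $\Gamma$, the conditions \eqref{1.2} become $\mathbf{u}_+=\mathbf{u}_-$ and $\mathbf{P}_{\tilde\mu,\tilde\lambda}(\mathbf{u}_+-\mathbf{u}_-)=0$ on $\Gamma$. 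Applying Remark~\ref{rem2.2} to $\mathbf{w}:=\mathbf{u}_+-\mathbf{u}_-$ (note that the remark rests only on Lemma~\ref{lem2.1}, not on the Navier equation) gives $\grad\mathbf{w}=0$, i.e. $\grad\mathbf{u}_+=\grad\mathbf{u}_-$ on $\Gamma$; equivalently, one projects the identity $\mathbf{P}_{\tilde\mu,\tilde\lambda}\mathbf{u}_+=\mathbf{P}_{\tilde\mu,\tilde\lambda}\mathbf{u}_-$ from Lemma~\ref{lem2.1} onto $\boldsymbol{\nu}$ and onto the tangential directions, using that $\mathbf{u}_+=\mathbf{u}_-$ kills the purely tangential-derivative terms $(\pa_{\boldsymbol{\tau}}\mathbf{u})^\perp$ and $\mathcal{M}_{\boldsymbol{\nu}}(\mathbf{u})$ and that $2\mu+\lambda>0$, $\mu>0$. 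Since $\phi_{p,\pm}=\dive\mathbf{u}_\pm$, $\phi_{s,\pm}=\dive^\perp\mathbf{u}_\pm$ (for $d=2$) and $\boldsymbol{\phi}_{s,\pm}=\curl\mathbf{u}_\pm$ (for $d=3$) are first-order constant-coefficient differential expressions in $\mathbf{u}_\pm$, the identity $\grad\mathbf{u}_+=\grad\mathbf{u}_-$ on $\Gamma$ at once yields the first line of each displayed system, namely $\phi_{p,+}=\phi_{p,-}$, $\phi_{s,+}=\phi_{s,-}$ and $\boldsymbol{\nu}\times\boldsymbol{\phi}_{s,+}=\boldsymbol{\nu}\times\boldsymbol{\phi}_{s,-}$ on $\Gamma$.

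For the remaining two lines I would insert the Helmholtz decomposition of $\mathbf{u}_\pm$ into the jump relation $\mathbf{u}_+=\mathbf{u}_-$ on $\Gamma$ and read off its components. For $d=2$, taking the scalar product of $\mathbf{u}_\pm=-k_{p,\pm}^{-2}\grad\phi_{p,\pm}-k_{s,\pm}^{-2}\grad^\perp\phi_{s,\pm}$ with $\boldsymbol{\nu}$ and with $\boldsymbol{\tau}=\boldsymbol{\nu}^\perp$, and using the elementary identities $\boldsymbol{\nu}\cdot\grad\psi=\pa_{\boldsymbol{\nu}}\psi$, $\boldsymbol{\tau}\cdot\grad\psi=\pa_{\boldsymbol{\tau}}\psi$, $\boldsymbol{\nu}\cdot\grad^\perp\psi=-\pa_{\boldsymbol{\tau}}\psi$, $\boldsymbol{\tau}\cdot\grad^\perp\psi=\pa_{\boldsymbol{\nu}}\psi$, the equalities $\boldsymbol{\nu}\cdot\mathbf{u}_+=\boldsymbol{\nu}\cdot\mathbf{u}_-$ and $\boldsymbol{\tau}\cdot\mathbf{u}_+=\boldsymbol{\tau}\cdot\mathbf{u}_-$ become exactly the second and the third lines. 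For $d=3$, substituting $\mathbf{u}_\pm=-k_{p,\pm}^{-2}\grad\phi_{p,\pm}+k_{s,\pm}^{-2}\curl\boldsymbol{\phi}_{s,\pm}$, the normal component $\boldsymbol{\nu}\cdot\mathbf{u}_+=\boldsymbol{\nu}\cdot\mathbf{u}_-$ (together with $\boldsymbol{\nu}\cdot\grad\phi_{p,\pm}=\pa_{\boldsymbol{\nu}}\phi_{p,\pm}$) gives the second line, and $\boldsymbol{\nu}\times\mathbf{u}_+=\boldsymbol{\nu}\times\mathbf{u}_-$ gives the third line.

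The substantive content here is entirely contained in Lemma~\ref{lem2.1} and Remark~\ref{rem2.2}: the real point is that matching Dirichlet and generalized-stress data across $\Gamma$ forces the full gradient---in particular the normal derivative---of $\mathbf{u}$ to match there, and the corollary is just the translation of this fact into the compressional and shear potentials. Accordingly there is no genuine analytic obstacle in the statement itself; what remains is routine bookkeeping, and the only things to watch are the signs generated by the $\perp$- and cross-product operations in the two- and three-dimensional identities, together with the (implicitly assumed) trace regularity of $\mathbf{u}_\pm$ and $\grad\mathbf{u}_\pm$ on $\Gamma$ that makes the restrictions and the quantities $\pa_{\boldsymbol{\nu}}\phi_{a,\pm}$, $\pa_{\boldsymbol{\tau}}\phi_{a,\pm}$ and $\boldsymbol{\nu}\cdot\curl\boldsymbol{\phi}_{s,\pm}$ meaningful.
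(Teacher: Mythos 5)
Your proposal is correct and follows essentially the same route as the paper: the second and third lines come from projecting the Dirichlet jump $\mathbf{u}_+=\mathbf{u}_-$ onto $\boldsymbol{\nu}$ and the tangential directions via the Helmholtz decomposition, and the first line comes from the stress condition together with Lemma \ref{lem2.1}, since equality of traces kills the tangential-derivative terms $(\pa_{\boldsymbol{\tau}}\mathbf{u})^\perp$ and $\mathcal{M}_{\boldsymbol{\nu}}(\mathbf{u})$ and the remaining normal/tangential components are separated by $2\mu+\lambda>0$, $\mu>0$. Your initial framing through Remark \ref{rem2.2} (matching of the full gradient) is a harmless, equivalent repackaging of the same computation.
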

\begin{proof}
	We only prove the two dimensional case, while the three dimensional case follows from a completely similar analog combining Lemma \ref{lem2.1} and Remark \ref{rem2.2}. 
	
	Since $\mathbf{u}^{in}=0$, we have $\mathbf{u}_+-\mathbf{u}_-=0$ on $\Gamma$, which further implies $\boldsymbol{\nu}\cdot(\mathbf{u}_+-\mathbf{u}_-)=\boldsymbol{\tau}\cdot(\mathbf{u}_+-\mathbf{u}_-)=0$ on $\Gamma$. By the Helmholtz decomposition it is seen that 
	\begin{align*}
	  \begin{aligned}
	  -k_{p,+}^{-2}\pa_{\boldsymbol{\nu}}\phi_{p,+}+k_{s,+}^{-2}\pa_{\boldsymbol{\tau}}\phi_{s,+}&=-k_{p,-}^{-2}\pa_{\boldsymbol{\nu}}\phi_{p,-}+k_{s,-}^{-2}\pa_{\boldsymbol{\tau}}\phi_{s,-}, \\
	  k_{p,+}^{-2}\pa_{\boldsymbol{\tau}}\phi_{p,+}+k_{s,+}^{-2}\pa_{\boldsymbol{\nu}}\phi_{s,+}&=k_{p,-}^{-2}\pa_{\boldsymbol{\tau}}\phi_{p,-}+k_{s,-}^{-2}\pa_{\boldsymbol{\nu}}\phi_{s,-}, 
	  \end{aligned}
	\end{align*}
	where we note that $\boldsymbol{\nu}\cdot\grad=\boldsymbol{\tau}\cdot\grad^\perp=\pa_{\boldsymbol{\nu}}$ and $\boldsymbol{\nu}\cdot\grad^\perp=-\boldsymbol{\tau}\cdot\grad=-\pa_{\boldsymbol{\tau}}$. Moreover, it follows from $\mathbf{u}_+-\mathbf{u}_-=0$ that $(\pa_{\boldsymbol{\tau}}\mathbf{u}_+)^\perp-(\pa_{\boldsymbol{\tau}}\mathbf{u}_-)^\perp=0$ on $\Gamma$, which indicates by Lemma \ref{lem2.1} that 
	$$
	  0=\mathbf{P}_{\tilde\mu,\tilde\lambda}\mathbf{u}_+-\mathbf{P}_{\tilde\mu,\tilde\lambda}\mathbf{u}_-=(2\mu+\lambda)\boldsymbol{\nu}(\dive\mathbf{u}_+-\dive\mathbf{u}_-)+\mu\boldsymbol{\tau}(\dive^\perp\mathbf{u}_+-\dive^\perp\mathbf{u}_-)
	$$
	on $\Gamma$. This clearly leads to $\phi_{p,+}=\phi_{p,-},~\phi_{s,+}=\phi_{s,-}$ on $\Gamma$, which ends the proof. 
\end{proof}
\begin{theorem}\label{thm2.4a}
	Assume $d=2$. Suppose $\mathbf{u}$ and $\mathbf{v}$ satisfy the Navier equaiton \eqref{1.1} and the radiation condition \eqref{1.3} in $D^+$ with $\phi_p,\varphi_p$ being the compressional part and $\phi_s,\varphi_s$ being the shear part, respectively. Then we have 
	\begin{align*}
	  &\lim\limits_{R\rightarrow\infty}\int_{\pa B_R^+}\left(\mathbf{P}_{\tilde{\mu},\tilde{\lambda}}\mathbf{u}\cdot\mathbf{v}-\mathbf{P}_{\tilde{\mu},\tilde{\lambda}}\mathbf{v}\cdot\mathbf{u}\right)ds=0, \\
	  &\lim\limits_{R\rightarrow\infty}\left(\I\int_{\pa B_R^+}\mathbf{P}_{\tilde{\mu},\tilde{\lambda}}\mathbf{u}\cdot\mathbf{\ov u}ds-(2\mu+\lambda)k_{p,+}^{-1}\int_{\pa B_R^+}|\phi_p|^2ds-\mu k_{s,+}^{-1}\int_{\pa B_R^+}|\phi_s|^2ds\right)=0, 
	\end{align*}
	where $\pa B_R^\pm:=\pa B_R\cap D^\pm$. 
\end{theorem}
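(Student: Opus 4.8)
The plan is to reduce both limits, by means of Lemma~\ref{lem2.1} and the Helmholtz decomposition, to elementary consequences of the radiation condition \eqref{1.3a} for the scalar potentials. Since $f$ has compact support, for all sufficiently large $R$ the set $\pa B_R^+$ coincides with the upper half-circle $\pa B_R\cap\R^2_+$, on which $\boldsymbol\nu=\mathbf{\hat x}$, $\boldsymbol\tau=\mathbf{\hat x}^\perp$, and the tangential derivative $\pa_{\boldsymbol\tau}=r^{-1}\pa_\theta$ is purely angular.

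First I would use Lemma~\ref{lem2.1} to write, on $\pa B_R^+$,
\[
\mathbf{P}_{\tilde\mu,\tilde\lambda}\mathbf{u}=(\mu+\tilde\mu)(\pa_{\boldsymbol\tau}\mathbf{u})^\perp+(2\mu+\lambda)\,\mathbf{\hat x}\,\phi_p+\mu\,\mathbf{\hat x}^\perp\phi_s,
\]
and substitute the Helmholtz decomposition of $\mathbf{v}$, using $\mathbf{\hat x}\cdot\grad=\mathbf{\hat x}^\perp\cdot\grad^\perp=\pa_r$ and $\mathbf{\hat x}^\perp\cdot\grad=-\mathbf{\hat x}\cdot\grad^\perp=\pa_{\boldsymbol\tau}$ to evaluate $\mathbf{\hat x}\cdot\mathbf{v}$ and $\mathbf{\hat x}^\perp\cdot\mathbf{v}$. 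This produces the pointwise identity
\[
\mathbf{P}_{\tilde\mu,\tilde\lambda}\mathbf{u}\cdot\mathbf{v}=-(2\mu+\lambda)k_{p,+}^{-2}\,\phi_p\,\pa_r\varphi_p-\mu k_{s,+}^{-2}\,\phi_s\,\pa_r\varphi_s+\mathcal R(\mathbf{u},\mathbf{v}),
\]
where $\mathcal R(\mathbf{u},\mathbf{v})=(\mu+\tilde\mu)(\pa_{\boldsymbol\tau}\mathbf{u})^\perp\cdot\mathbf{v}+(2\mu+\lambda)k_{s,+}^{-2}\,\phi_p\,\pa_{\boldsymbol\tau}\varphi_s-\mu k_{p,+}^{-2}\,\phi_s\,\pa_{\boldsymbol\tau}\varphi_p$ gathers all remaining terms, each carrying exactly one tangential derivative; in particular the entire $\tilde\mu,\tilde\lambda$-dependence sits inside $\mathcal R$. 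By the standard far-field asymptotics of radiating Helmholtz solutions and their derivatives, $\phi_a,\varphi_a,\pa_r\phi_a,\pa_r\varphi_a=O(r^{-1/2})$ while the tangential derivatives $\pa_{\boldsymbol\tau}\phi_a=r^{-1}\pa_\theta\phi_a$ and $\pa_{\boldsymbol\tau}\varphi_a$ are $O(r^{-3/2})$, uniformly in $\theta$, and likewise $\pa_{\boldsymbol\tau}\mathbf{u}=O(r^{-3/2})$; hence $\mathcal R(\mathbf{u},\mathbf{v})=O(R^{-2})$ uniformly on $\pa B_R^+$, so that $\int_{\pa B_R^+}|\mathcal R(\mathbf{u},\mathbf{v})|\,ds=O(R^{-1})\to 0$. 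This is the precise mechanism announced in the introduction: with $c:=(2\mu+\lambda)k_{s,+}^{-2}=\mu k_{p,+}^{-2}$, the antisymmetric combination $\mathcal R(\mathbf{u},\mathbf{v})-\mathcal R(\mathbf{v},\mathbf{u})$ is even an exact tangential derivative $\pa_{\boldsymbol\tau}$ of a quadratic expression in $\mathbf{u},\mathbf{v},\phi_p,\phi_s,\varphi_p,\varphi_s$, whose integral over the half-circle collapses to the difference of its two endpoint values and therefore vanishes in the limit.

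Granted the displayed identity, the first assertion follows by subtracting the same identity with $\mathbf{u}$ and $\mathbf{v}$ interchanged: the remainder $\mathcal R(\mathbf{u},\mathbf{v})-\mathcal R(\mathbf{v},\mathbf{u})$ integrates to $o(1)$ as above, while the surviving part equals $-(2\mu+\lambda)k_{p,+}^{-2}(\phi_p\pa_r\varphi_p-\varphi_p\pa_r\phi_p)-\mu k_{s,+}^{-2}(\phi_s\pa_r\varphi_s-\varphi_s\pa_r\phi_s)$; writing $\phi_a\pa_r\varphi_a-\varphi_a\pa_r\phi_a=\phi_a(\pa_r\varphi_a-ik_{a,+}\varphi_a)-\varphi_a(\pa_r\phi_a-ik_{a,+}\phi_a)$ and invoking \eqref{1.3a} together with the $O(r^{-1/2})$ bounds shows each such Wronskian is $o(R^{-1})$, so its integral over $\pa B_R^+$ tends to $0$. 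For the second assertion I would put $\mathbf{v}=\ov{\mathbf{u}}$, hence $\varphi_a=\ov{\phi_a}$, into the displayed identity and take imaginary parts: $\I\mathcal R(\mathbf{u},\ov{\mathbf{u}})$ is still $O(R^{-2})$, while \eqref{1.3a} yields $\I(\phi_a\pa_r\ov{\phi_a})=-\I(\ov{\phi_a}\pa_r\phi_a)=-k_{a,+}|\phi_a|^2+o(r^{-1})$; multiplying these by $(2\mu+\lambda)k_{p,+}^{-2}$ and $\mu k_{s,+}^{-2}$, respectively, and integrating over $\pa B_R^+$ leaves precisely $(2\mu+\lambda)k_{p,+}^{-1}\int_{\pa B_R^+}|\phi_p|^2\,ds+\mu k_{s,+}^{-1}\int_{\pa B_R^+}|\phi_s|^2\,ds$ up to an $o(1)$ error, which is the claimed identity.

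The main obstacle is not any single estimate but making this reduction rigorous. One must justify the $O(r^{-1/2})$ and $O(r^{-3/2})$ decay of $\phi_a$, $\varphi_a$ and their first-order derivatives uniformly up to the equator $x_2=0$, starting only from the half-plane radiation condition \eqref{1.3a} — which is exactly why one appeals to the sharp far-field expansion (and its angular derivatives) for Helmholtz solutions radiating into a half-space. A secondary point is the bookkeeping confirming that every $\tilde\mu,\tilde\lambda$-dependent contribution genuinely lands inside $\mathcal R$; here Lemma~\ref{lem2.1} is indispensable, as it trades the opaque term $(\mu+\tilde\mu)\pa_{\boldsymbol\nu}\mathbf{u}$ for the manifestly tangential $(\mu+\tilde\mu)(\pa_{\boldsymbol\tau}\mathbf{u})^\perp$.
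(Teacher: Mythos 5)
Your overall skeleton coincides with the paper's: use Lemma \ref{lem2.1} and the Helmholtz decomposition to split $\mathbf{P}_{\tilde\mu,\tilde\la}\mathbf{u}\cdot\mathbf{v}$ into radial Wronskian/energy terms plus a tangential remainder $\mathcal{R}$, and you correctly spot that for the first limit the antisymmetric combination $\mathcal{R}(\mathbf{u},\mathbf{v})-\mathcal{R}(\mathbf{v},\mathbf{u})$ is an exact tangential derivative (via $(2\mu+\la)k_{s,+}^{-2}=\mu k_{p,+}^{-2}$), so its integral over the half-circle collapses to two endpoint values. That part is essentially the paper's argument.

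The genuine gap is in your treatment of the second limit. There you dispose of $\I\mathcal{R}(\mathbf{u},\ov{\mathbf{u}})$ by asserting $\pa_{\boldsymbol\tau}\phi_a=O(r^{-3/2})$, $\pa_{\boldsymbol\tau}\mathbf{u}=O(r^{-3/2})$ uniformly up to the equator, ``by the standard far-field asymptotics.'' No such expansion can be invoked here: the only hypothesis is the half-plane condition \eqref{1.3a}, which does not by itself give pointwise $O(r^{-1/2})$ decay of $\phi_a$, let alone an $O(r^{-3/2})$ rate for angular derivatives uniformly down to $x_2=0$; the paper's own two-layered solutions admit only expansions with $O(r^{-3/4})$ remainders near the equator (Theorems \ref{thm3.2}--\ref{thm3.3}), which is exactly why this is the delicate regime. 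You flag this as ``the main obstacle'' but the proposed resolution is not available, so the step fails as written. The missing observation, which is what the paper uses, is that the Hermitian remainder enjoys the same cancellation as the antisymmetric one: since $\I(\phi_s\pa_{\boldsymbol\tau}\ov\phi_p)=-\I(\ov\phi_s\pa_{\boldsymbol\tau}\phi_p)$ and $\I\big((\pa_{\boldsymbol\tau}\mathbf{u})^\perp\cdot\ov{\mathbf{u}}\big)=\I\big(\pa_{\boldsymbol\tau}(\ov u_1u_2)\big)$, one has
\begin{align*}
\I\mathcal{R}(\mathbf{u},\ov{\mathbf{u}})=(\mu+\tilde\mu)\,\I\big(\pa_{\boldsymbol\tau}(\ov u_1u_2)\big)+\frac{(2\mu+\la)\mu}{\rho_+\om^2}\,\I\big(\pa_{\boldsymbol\tau}(\phi_p\ov\phi_s)\big),
\end{align*}
again an exact tangential derivative, so its integral over $\pa B_R^+$ reduces to the difference of values at $(\pm R,0)$ and no uniform angular-derivative decay is needed. (Two smaller points: your Wronskian estimate is stated pointwise as $o(R^{-1})$, which again presumes pointwise decay; the standard route is Cauchy--Schwarz together with the $L^2(\pa B_R^+)$-boundedness of $\phi_a$ that follows from the radiation condition and Green's identity. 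And the vanishing of the endpoint values still requires decay of the fields at the two points $(\pm R,0)$ on the flat part of the interface, a far weaker and localized requirement than the uniform bounds you invoke.)
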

\begin{proof}
	By Lemma \ref{lem2.1} and the Helmholtz decomposition, it is derived that 
	\begin{align}\label{2.1a} 
	  \begin{aligned} 
	  \mathbf{P}_{\tilde{\mu},\tilde{\lambda}}\mathbf{u}\cdot\mathbf{v}=\;&(\mu+\tilde{\mu})(\pa_{\boldsymbol{\tau}}\mathbf{u})^\perp\cdot\mathbf{v}+(2\mu+\lambda)\phi_p(-k_{p,+}^{-2}\pa_{\boldsymbol{\nu}}\varphi_p+k_{s,+}^{-2}\pa_{\boldsymbol{\tau}}\varphi_s) \\
	  &-\mu\phi_s(k_{p,+}^{-2}\pa_{\boldsymbol{\tau}}\varphi_p+k_{s,+}^{-2}\pa_{\boldsymbol{\nu}}\varphi_s), 
	  \end{aligned}
	\end{align}
	which implies that 
	\begin{align*}
	  \begin{aligned}
	  \mathbf{P}_{\tilde{\mu},\tilde{\lambda}}\mathbf{u}\cdot\mathbf{v}-\mathbf{P}_{\tilde{\mu},\tilde{\lambda}}\mathbf{v}\cdot\mathbf{u} 
	  =\;&(\mu+\tilde{\mu})\pa_{\boldsymbol{\tau}}(\mathbf{u}^\perp\cdot\mathbf{v})+\frac{(2\mu+\lambda)\mu}{\rho_+\om^2}\pa_{\boldsymbol{\tau}}(\phi_p\varphi_s-\phi_s\varphi_p) \\
	  &-(2\mu+\lambda)k_{p,+}^{-2}(\phi_p\pa_{\boldsymbol{\nu}}\varphi_p-\varphi_p\pa_{\boldsymbol{\nu}}\phi_p)-\mu k_{s,+}^{-2}(\phi_s\pa_{\boldsymbol{\nu}}\varphi_s-\varphi_s\pa_{\boldsymbol{\nu}}\phi_s). 
	  \end{aligned}
	\end{align*}
	Therefore, for $R>0$ sufficiently large, 
	\ben
	  \begin{aligned}
	  &\int_{\pa B_R^+}\left(\mathbf{P}_{\tilde{\mu},\tilde{\lambda}}\mathbf{u}\cdot\mathbf{v}-\mathbf{P}_{\tilde{\mu},\tilde{\lambda}}\mathbf{v}\cdot\mathbf{u}\right)ds \\
	  =\;&(\mu+\tilde{\mu})[(\mathbf{u}^\perp\cdot\mathbf{v})(R,0)-(\mathbf{u}^\perp\cdot\mathbf{v})(-R,0)]+\frac{(2\mu+\lambda)\mu}{\rho_+\om^2}[(\phi_p\varphi_s)(R,0)-(\phi_p\varphi_s)(-R,0)] \\
	  &-\frac{(2\mu+\lambda)\mu}{\rho_+\om^2}[(\phi_s\varphi_p)(R,0)-(\phi_s\varphi_p)(-R,0)]-(2\mu+\lambda)k_{p,+}^{-2}\int_{\pa B_R^+}(\phi_p\pa_{\boldsymbol{\nu}}\varphi_p-\varphi_p\pa_{\boldsymbol{\nu}}\phi_p)ds \\
	  &-\mu k_{s,+}^{-2}\int_{\pa B_R^+}(\phi_s\pa_{\boldsymbol{\nu}}\varphi_s-\varphi_s\pa_{\boldsymbol{\nu}}\phi_s)ds, 
	  \end{aligned}
	\enn
	which tends to zero as $R\rightarrow\infty$ by the radiation condition \eqref{1.3}. 
	
	Further, again by \eqref{2.1a} we see that 
	  \begin{align*}
	  	&\I(\mathbf{P}_{\tilde{\mu},\tilde{\lambda}}\mathbf{u}\cdot\mathbf{\ov u}) \\
	  =\;&\I(\pa_{\boldsymbol{\tau}}(\ov u_1u_2))+\frac{(2\mu+\lambda)\mu}{\rho_+\om^2}\I(\pa_{\boldsymbol{\tau}}(\phi_p\ov\phi_s))-(2\mu+\lambda)k_{p,+}^{-2}\I[\phi_p(\pa_{\boldsymbol{\nu}}\ov\phi_p+ik_{p,+}\ov\phi_p)] \\
	  &-\mu k_{s,+}^{-2}\I[\phi_s(\pa_{\boldsymbol{\nu}}\ov\phi_s+ik_{s,+}\ov\phi_s)]+(2\mu+\lambda)k_{p,+}^{-1}|\phi_p|^2+\mu k_{s,+}^{-1}|\phi_s|^2, 
	  \end{align*}
	which indicates the desired limit by the radiation condition \eqref{1.3}. 
\end{proof}
\begin{theorem}\label{thm2.5a}
	Assume $d=3$. Suppose $\mathbf{u}$ and $\mathbf{v}$ satisfy the Navier equaiton \eqref{1.1} and the radiation condition \eqref{1.3} in $D^+$ with $\phi_p,\varphi_p$ being the compressional part and $\boldsymbol{\phi}_s,\boldsymbol{\varphi}_s$ being the shear part, respectively. Then we have 
	\begin{align*}
	&\lim\limits_{R\rightarrow\infty}\int_{\pa B_R^+}\left(\mathbf{P}_{\tilde{\mu},\tilde{\lambda}}\mathbf{u}\cdot\mathbf{v}-\mathbf{P}_{\tilde{\mu},\tilde{\lambda}}\mathbf{v}\cdot\mathbf{u}\right)ds=0, \\
	&\lim\limits_{R\rightarrow\infty}\left(\I\int_{\pa B_R^+}\mathbf{P}_{\tilde{\mu},\tilde{\lambda}}\mathbf{u}\cdot\mathbf{\ov u}ds-(2\mu+\lambda)k_{p,+}^{-1}\int_{\pa B_R^+}|\phi_p|^2ds-\mu k_{s,+}^{-1}\int_{\pa B_R^+}|\boldsymbol{\phi}_s|^2ds\right)=0. 
	\end{align*}
\end{theorem}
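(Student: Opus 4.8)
The plan is to follow the blueprint of the two-dimensional case in Theorem~\ref{thm2.4a}, replacing the scalar shear potential by the vectorial one and the differential identity for $\dive^\perp$ by the $\curl$-based version supplied by Lemma~\ref{lem2.1} in three dimensions. First I would substitute the Helmholtz decomposition $\mathbf{u}=-k_{p,+}^{-2}\grad\phi_p+k_{s,+}^{-2}\curl\boldsymbol{\phi}_s$ (and likewise for $\mathbf{v}$) into the formula $\mathbf{P}_{\tilde{\mu},\tilde{\lambda}}\mathbf{u}=(\mu+\tilde\mu)\mathcal{M}_{\boldsymbol{\nu}}(\mathbf{u})+(2\mu+\lambda)\boldsymbol{\nu}\dive\mathbf{u}-\mu\boldsymbol{\nu}\times\curl\mathbf{u}$, obtaining a clean expression for $\mathbf{P}_{\tilde{\mu},\tilde{\lambda}}\mathbf{u}\cdot\mathbf{v}$ analogous to \eqref{2.1a}: the $\mathcal{M}_{\boldsymbol{\nu}}$-term pairs tangentially, the $\boldsymbol{\nu}\dive\mathbf{u}$-term produces $(2\mu+\lambda)\phi_p\,(\boldsymbol{\nu}\cdot\mathbf{v})$ which rewrites in terms of $\pa_{\boldsymbol{\nu}}\varphi_p$ and $\boldsymbol{\nu}\cdot\curl\boldsymbol{\varphi}_s$ by the decomposition and the identities $\boldsymbol{\nu}\cdot\grad=\pa_{\boldsymbol{\nu}}$, $\boldsymbol{\nu}\cdot\curl\curl\boldsymbol{\varphi}_s=k_{s,+}^2\,\boldsymbol{\nu}\cdot\boldsymbol{\varphi}_s$, and the $\boldsymbol{\nu}\times\curl\mathbf{u}$-term rewrites via $\curl\grad\phi_p=0$ and $\curl\curl\boldsymbol{\phi}_s=k_{s,+}^2\boldsymbol{\phi}_s$ as $-\mu\,\boldsymbol{\nu}\times\boldsymbol{\phi}_s$, paired against $\mathbf{v}$.

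Next I would antisymmetrize: forming $\mathbf{P}_{\tilde{\mu},\tilde{\lambda}}\mathbf{u}\cdot\mathbf{v}-\mathbf{P}_{\tilde{\mu},\tilde{\lambda}}\mathbf{v}\cdot\mathbf{u}$, the terms carrying the constants $\tilde\mu,\tilde\lambda$ — i.e.\ those built from $\mathcal{M}_{\boldsymbol{\nu}}$, which by Remark~\ref{rem2.2} depends only on $\boldsymbol{\nu}\times\grad\mathbf{u}$ — must collapse to a tangential (surface-divergence) expression, hence integrate to zero on the closed curve $\pa B_R\cap\Gamma_0=\pa B_R^+\cap\pa B_R^-$ (the equator), using that $f$ has compact support so for $R$ large the boundary of the hemisphere $\pa B_R^+$ is a great circle in $\Gamma_0$ and there are no edge contributions beyond it; similarly the mixed $\phi_p$–$\boldsymbol{\phi}_s$ cross terms assemble into a tangential divergence and vanish. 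What remains is $-(2\mu+\lambda)k_{p,+}^{-2}(\phi_p\pa_{\boldsymbol{\nu}}\varphi_p-\varphi_p\pa_{\boldsymbol{\nu}}\phi_p)$ plus a vectorial shear analogue $\mu k_{s,+}^{-2}\big(\boldsymbol{\phi}_s\cdot(\boldsymbol{\nu}\times\curl\boldsymbol{\varphi}_s)-\boldsymbol{\varphi}_s\cdot(\boldsymbol{\nu}\times\curl\boldsymbol{\phi}_s)\big)$, and each of these surface integrals over $\pa B_R^+$ tends to zero as $R\to\infty$: the compressional term by Green's second identity together with the Sommerfeld-type condition \eqref{1.3b}, and the shear term by the vectorial Green's identity for $\curl\curl-k_{s,+}^2$ together with the Silver–Müller-type condition $\curl\boldsymbol{\phi}_s\times\mathbf{\hat x}-ik_{s,+}\boldsymbol{\phi}_s=o(r^{-1})$. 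This gives the first limit.

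For the second limit I would take $\mathbf{v}=\mathbf{\ov u}$ and compute $\I(\mathbf{P}_{\tilde{\mu},\tilde{\lambda}}\mathbf{u}\cdot\mathbf{\ov u})$ from the same expansion. The $\mathcal{M}_{\boldsymbol{\nu}}$ contribution and the $\phi_p$–$\boldsymbol{\phi}_s$ cross terms again become imaginary parts of tangential divergences, integrating to zero on $\pa B_R^+$ up to equator terms that vanish as $R\to\infty$. The compressional piece contributes $-(2\mu+\lambda)k_{p,+}^{-2}\I\big[\phi_p(\pa_{\boldsymbol{\nu}}\ov\phi_p+ik_{p,+}\ov\phi_p)\big]+(2\mu+\lambda)k_{p,+}^{-1}|\phi_p|^2$, where the first summand is $o(1)$ in $L^1(\pa B_R^+)$ by \eqref{1.3b} and standard boundedness of $\int_{\pa B_R^+}|\phi_p|^2$; the shear piece, using $\boldsymbol{\nu}=\mathbf{\hat x}$ on $\pa B_R$ and $\I\big(\boldsymbol{\phi}_s\cdot(\mathbf{\hat x}\times\curl\ov{\boldsymbol{\phi}}_s)\big)=-\I\big(\ov{\boldsymbol{\phi}}_s\cdot(\curl\boldsymbol{\phi}_s\times\mathbf{\hat x})\big)$, contributes $-\mu k_{s,+}^{-2}\I\big[\ov{\boldsymbol{\phi}}_s\cdot(\curl\boldsymbol{\phi}_s\times\mathbf{\hat x}-ik_{s,+}\boldsymbol{\phi}_s)\big]+\mu k_{s,+}^{-1}|\boldsymbol{\phi}_s|^2$, whose first summand vanishes in the limit by the Silver–Müller condition. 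Collecting terms yields exactly the stated identity.

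The main obstacle I anticipate is purely algebraic rather than analytic: verifying that the $\mathcal{M}_{\boldsymbol{\nu}}$-generated terms and the $\phi_p$–$\boldsymbol{\phi}_s$ cross terms genuinely reorganize into surface-tangential divergences (so that they drop out on the closed great circle and in the limit), which in three dimensions requires careful use of the vector identities $\dive(\mathbf{a}\times\mathbf{b})=\mathbf{b}\cdot\curl\mathbf{a}-\mathbf{a}\cdot\curl\mathbf{b}$ and the surface-divergence theorem on the hemisphere $\pa B_R^+$, keeping track of the equatorial boundary on $\Gamma_0$ and invoking the compact support of $f$ to control it. Once that bookkeeping is done, the passage to the limit is routine given the radiation conditions \eqref{1.3b} and the a priori boundedness of the energy integrals $\int_{\pa B_R^+}|\phi_p|^2\,ds$ and $\int_{\pa B_R^+}|\boldsymbol{\phi}_s|^2\,ds$.
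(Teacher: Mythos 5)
Your proposal follows essentially the same route as the paper: substitute the Helmholtz decomposition into the identity of Lemma \ref{lem2.1}, antisymmetrize so that the $\tilde\mu$-dependent terms and the $\phi_p$--$\boldsymbol{\phi}_s$ cross terms become surface divergences (the paper uses $\mathcal{M}_{\boldsymbol{\nu}}(\mathbf{u})\cdot\mathbf{v}-\mathcal{M}_{\boldsymbol{\nu}}(\mathbf{v})\cdot\mathbf{u}=\boldsymbol{\nu}\cdot\curl(\mathbf{u}\times\mathbf{v})=-\Div(\boldsymbol{\nu}\times(\mathbf{u}\times\mathbf{v}))$), and then treat the remaining compressional and shear terms with the Sommerfeld- and Silver--M\"{u}ller-type conditions \eqref{1.3b} exactly as you describe. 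The one imprecision is the phrase that the tangential-divergence terms ``integrate to zero on the closed curve'': the surface-divergence theorem on the hemisphere leaves a line integral over the equator that is not identically zero but only tends to zero as $R\to\infty$ by the decay of the radiating fields, which is what the paper's terse appeal to ``the Stokes formula and the radiation condition'' amounts to.
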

\begin{proof}
	The proof is similar as Theorem \ref{thm2.4a} with some modifications. It follows from Lemma \ref{lem2.1} and the Helmholtz decomposition that 
	\begin{align*}
	  \begin{aligned}
	  	\mathbf{P}_{\tilde{\mu},\tilde{\lambda}}\mathbf{u}\cdot\mathbf{v}=\;&(\mu+\tilde{\mu})\mathcal{M}_{\boldsymbol{\nu}}(\mathbf{u})\cdot\mathbf{v}+(2\mu+\lambda)\phi_p(-k_{p,+}^{-2}\pa_{\boldsymbol{\nu}}\varphi_p+k_{s,+}^{-2}\boldsymbol{\nu}\cdot\curl\boldsymbol{\varphi}_s) \\
	  &-\mu\boldsymbol{\phi}_s\cdot(-k_{p,+}^{-2}\grad\varphi_p\times\boldsymbol{\nu}+k_{s,+}^{-2}\curl\boldsymbol{\varphi}_s\times\boldsymbol{\nu}). 
	  \end{aligned}
	\end{align*}
	Denote by $\Div$ the surface divergence. By direct calculation we derive that 
	$$
	  \mathcal{M}_{\boldsymbol{\nu}}(\mathbf{u})\cdot\mathbf{v}-\mathcal{M}_{\boldsymbol{\nu}}(\mathbf{v})\cdot\mathbf{u}=\boldsymbol{\nu}\cdot\curl(\mathbf{u}\times\mathbf{v})=-\Div(\boldsymbol{\nu}\times(\mathbf{u}\times\mathbf{v})), 
	$$
	and thus 
	$$
	  \I(\mathcal{M}_{\boldsymbol{\nu}}(\mathbf{u})\cdot\mathbf{\ov u})=\frac{1}{2}\I(\boldsymbol{\nu}\cdot\curl(\mathbf{u}\times\mathbf{\ov u}))=-\frac{1}{2}\I(\Div(\boldsymbol{\nu}\times(\mathbf{u}\times\mathbf{\ov u}))). 
	$$
	Therefore, it is deduced that 
	\ben
	  \begin{aligned}
	  	&\;\mathbf{P}_{\tilde{\mu},\tilde{\lambda}}\mathbf{u}\cdot\mathbf{v}-\mathbf{P}_{\tilde{\mu},\tilde{\lambda}}\mathbf{v}\cdot\mathbf{u} \\
	  =\;&-(\mu+\tilde{\mu})\Div(\boldsymbol{\nu}\times(\mathbf{u}\times\mathbf{v}))-\frac{(2\mu+\lambda)\mu}{\rho_+\om^2}\Div(\boldsymbol{\nu}\times(\phi_p\boldsymbol{\varphi}_s-\varphi_p\boldsymbol{\phi}_s)) \\
	  &-(2\mu+\lambda)k_{p,+}^{-2}(\phi_p\pa_{\boldsymbol{\nu}}\varphi_p-\varphi_p\pa_{\boldsymbol{\nu}}\phi_p)-\mu k_{s,+}^{-2}(\boldsymbol{\phi}_s\cdot\curl\boldsymbol{\varphi}_s\times\boldsymbol{\nu}-\boldsymbol{\varphi}_s\cdot\curl\boldsymbol{\phi}_s\times\boldsymbol{\nu}), 
	  \end{aligned}
	\enn
	and 
	\ben
	  \begin{aligned}
	  	&\;\I(\mathbf{P}_{\tilde{\mu},\tilde{\lambda}}\mathbf{u}\cdot\mathbf{\ov u}) \\
	  =\;&-\frac{(\mu+\tilde{\mu})}{2}\I(\Div(\boldsymbol{\nu}\times(\mathbf{u}\times\mathbf{\ov u})))-\frac{(2\mu+\lambda)\mu}{\rho_+\om^2}\I(\Div(\boldsymbol{\nu}\times(\phi_p\boldsymbol{\ov\phi}_s))) \\
	  &-(2\mu+\lambda)k_{p,+}^{-2}\I(\phi_p(\pa_{\boldsymbol{\nu}}\ov\phi_p+ik_{p,+}\ov\phi_p))-\mu k_{s,+}^{-2}\I(\boldsymbol{\phi}_s\cdot(\curl\boldsymbol{\ov\phi}_s\times\boldsymbol{\nu}+ik_{s,+}\boldsymbol{\ov\phi}_s)) \\
	  &+(2\mu+\lambda)k_{p,+}^{-1}|\phi_p|^2+\mu k_{s,+}^{-1}|\boldsymbol{\phi}_s|^2, 
	  \end{aligned}
	\enn
	which finishes the proof by the Stokes formula and the radiation condition \eqref{1.3}. 
\end{proof}

Clearly, Theorems \ref{thm2.4a} and \ref{thm2.5a} still hold if we replace $D^+$ by $D^-$. Utilizing these theorems, we can easily get the uniqueness result. 

\begin{theorem}\label{thm2.4}
	The problem \eqref{1.1}--\eqref{1.3} has at most one solution. 
\end{theorem}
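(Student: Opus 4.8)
The plan is to establish uniqueness by showing that any solution with zero data must vanish identically. Suppose $\mathbf{u}^{in}=0$ and let $\mathbf{u}_\pm$ be the corresponding solution; the goal is to prove $\mathbf{u}_\pm\equiv 0$ in $D^\pm$. The first step is to reduce the transmission problem with general $a_0>0$ to the case $a_0=1$ by rescaling the transmitted field, so that Corollary \ref{cor2.3} becomes available and the interface conditions decouple into Cauchy-type conditions for the potentials $\phi_{p,\pm},\phi_{s,\pm}$ (or $\boldsymbol{\phi}_{s,\pm}$ in three dimensions). Actually, since the second transmission condition in \eqref{1.2} already carries the factor $a_0$, I would instead argue directly: combine $\mathbf{u}_+ = \mathbf{u}_-$ on $\Gamma$ with the stress identity from Lemma \ref{lem2.1} and the fact (from Remark \ref{rem2.2}) that $\mathbf{u}_+=\mathbf{u}_-$ already forces the tangential-derivative terms $(\pa_{\boldsymbol{\tau}}\mathbf{u})^\perp$ resp. $\mathcal{M}_{\boldsymbol{\nu}}(\mathbf{u})$ to agree on $\Gamma$; hence the stress condition collapses to a relation purely among $\dive\mathbf{u}_\pm$ and $\dive^\perp\mathbf{u}_\pm$ (resp. $\boldsymbol{\nu}\times\curl\mathbf{u}_\pm$), independent of $\tilde\mu,\tilde\lambda$.

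The second step is the energy/flux argument. Apply the second limit of Theorem \ref{thm2.4a} (for $d=2$) or Theorem \ref{thm2.5a} (for $d=3$) to $\mathbf{u}_+$ on $\pa B_R^+$, and the analogous limit on $\pa B_R^-$ for $\mathbf{u}_-$. This gives
\begin{align*}
\I\int_{\pa B_R^+}\mathbf{P}_{\tilde{\mu},\tilde{\lambda}}\mathbf{u}_+\cdot\mathbf{\ov u}_+\,ds &= (2\mu+\lambda)k_{p,+}^{-1}\int_{\pa B_R^+}|\phi_{p,+}|^2ds+\mu k_{s,+}^{-1}\int_{\pa B_R^+}|\phi_{s,+}|^2ds+o(1),
\end{align*}
and similarly on $\pa B_R^-$. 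Meanwhile, applying Betti's (first) integral identity in the bounded domains $D^\pm\cap B_R$, using $\Delta^*\mathbf{u}_\pm = -\rho_\pm\om^2\mathbf{u}_\pm$ so that the volume term $\int(\ldots)$ has real-valued imaginary part zero, one obtains that $\I\int_{\pa B_R^+}\mathbf{P}_{\tilde{\mu},\tilde{\lambda}}\mathbf{u}_+\cdot\mathbf{\ov u}_+\,ds$ plus the matching interface contribution from $\pa(D^-\cap B_R)$ must balance. The decoupled interface conditions from step one (together with $\mathbf{u}_+=\mathbf{u}_-$ on $\Gamma$ and the matching of potentials from Corollary \ref{cor2.3}, suitably adapted to incorporate $a_0$) are designed precisely so that the interface integrals cancel, leaving the sum of the nonnegative radiated-energy integrals over $\pa B_R^+$ and $\pa B_R^-$ equal to $o(1)$. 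Since each term is nonnegative, all of them tend to zero, forcing $\int_{\pa B_R^\pm}|\phi_{a,\pm}|^2\,ds\to 0$ for $a=p,s$.

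The third step converts this decay into vanishing. By Rellich's lemma / the unique continuation principle for the Helmholtz equation satisfied by each scalar potential $\phi_{a,\pm}$ (and the vectorial Helmholtz equation for $\boldsymbol{\phi}_{s,\pm}$ in $d=3$, via the Silver–Müller form of \eqref{1.3b}), the vanishing of the far-field energy $\int_{\pa B_R}|\phi_{a,\pm}|^2\to 0$ implies $\phi_{a,\pm}\equiv 0$ in the exterior region, hence in all of $D^\pm$ by analyticity away from $\Gamma$ and unique continuation across $\Gamma_0\setminus\Gamma$. Here I would invoke, as the excerpt suggests, the known uniqueness results for acoustic scattering by locally rough surfaces (for $\phi_{p,\pm}$) and electromagnetic scattering (for $\boldsymbol{\phi}_{s,\pm}$), since the decoupled system for the potentials is exactly a two-layered acoustic/electromagnetic transmission problem with a local perturbation. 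Finally, the Helmholtz decomposition $\mathbf{u}_\pm = -k_{p,\pm}^{-2}\grad\phi_{p,\pm} \mp(\text{shear term})$ gives $\mathbf{u}_\pm\equiv 0$.

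The main obstacle I anticipate is the bookkeeping in step two: one must verify carefully that, with the transmission coefficient $a_0\neq 1$, the interface integrals over $\Gamma$ arising from Betti's identity on $D^+\cap B_R$ and $D^-\cap B_R$ genuinely cancel — this requires pairing $\mathbf{P}_{\tilde\mu,\tilde\lambda}\mathbf{u}_+\cdot\mathbf{\ov u}_+$ against $a_0\mathbf{P}_{\tilde\mu,\tilde\lambda}\mathbf{u}_-\cdot\mathbf{\ov u}_-$ and exploiting both $\mathbf{u}_+=\mathbf{u}_-$ and the precise decoupled form of the stress condition, rather than a naive cancellation. A secondary subtlety is justifying that the boundary terms at the two "endpoints" $(\pm R,0)$ in the $d=2$ computation (or the surface-divergence terms via Stokes in $d=3$) really vanish in the limit, which relies on the pointwise decay of the potentials built into \eqref{1.3}; this is routine given the radiation conditions but should be stated. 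Once the energy identity is in place, steps one and three are comparatively standard.
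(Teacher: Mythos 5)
Your proposal follows essentially the same route as the paper: take imaginary parts of Betti's identity on $B_R\cap D^\pm$, weight the lower-half contribution by $a_0$ so the interface terms cancel, invoke the second limits of Theorems \ref{thm2.4a} and \ref{thm2.5a} to conclude that the far-field energies of the potentials vanish, and then apply the two-layered Rellich-type uniqueness result (the paper cites \cite[Proposition 2.1]{PC98}) together with the Helmholtz decomposition. The cancellation you flag as the main obstacle is in fact immediate and requires no decoupling of the interface conditions: the transmission conditions give $\mathbf{P}_{\tilde\mu,\tilde\lambda}\mathbf{u}_+\cdot\ov{\mathbf{u}}_+=a_0\,\mathbf{P}_{\tilde\mu,\tilde\lambda}\mathbf{u}_-\cdot\ov{\mathbf{u}}_-$ pointwise on $\Gamma$, which is exactly what the weighted sum of the two Betti identities needs.
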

\begin{proof}
	Suppose $\mathbf{u}^{in}=0$, we aim to show $\mathbf{u}_\pm=0$ in $D^\pm$. Integration by parts over $B_R\cap D^\pm$ with $R>0$ sufficiently large yields that 
	$$
	  \I\int_{\pa B_R^+}\mathbf{P}_{\tilde{\mu},\tilde{\lambda}}\mathbf{u_+}\cdot\mathbf{\ov u_+}ds+a_0\I\int_{\pa B_R^-}\mathbf{P}_{\tilde{\mu},\tilde{\lambda}}\mathbf{u_-}\cdot\mathbf{\ov u_-}ds=0. 
	$$
	Combining Theorems \ref{thm2.4a} and \ref{thm2.5a}, and the uniqueness result \cite[Proposition 2.1]{PC98}, we see that $\phi_{p,\pm}=\phi_{s,\pm}=0$ and $\boldsymbol{\phi}_{s,\pm}=0$ in $D^\pm$. Therefore, $\mathbf{u}_\pm=0$ in $D^\pm$. 
\end{proof}
\begin{remark}\label{rem2.5}
	Evidently, we can extends Theorem {\rm \ref{thm2.4}} to the case that there is a obstacle or an inhomogeneous media embedded in $D^\pm$, and further, $\lambda$ and $\mu$ are different in $D^\pm$. 
	%Moreover, similarly as in this section, it is possible to derive the uniqueness of \eqref{1.1}--\eqref{1.3} if $\Gamma$ is periodic and \eqref{1.3} is replaced by the Rayleigh expansion condition. 
	Moreover, if $\Gamma$ is an impenetrable locally rough surface, the corresponding uniqueness results with respect to kinds of boundary conditions can be deduced analogously. 
\end{remark}
\begin{remark}\label{rem2.6}
	Lemma {\rm \ref{lem2.1}} and Theorems {\rm \ref{thm2.4a}} and {\rm \ref{thm2.5a}} actually bring new insights to the foundation of elastic scattering. Considerable existing results in the elastic scattering hold only under some assumptions on the value of $\tilde\lambda$ and $\tilde\mu$, while it can be seen that Theorems {\rm \ref{thm2.4a}} and {\rm \ref{thm2.5a}} are independent of the value of $\tilde\lambda$ and $\tilde\mu$. The reason is that Lemma {\rm \ref{lem2.1}} provides an explicit relation between the generalized stress vector and $\tilde\lambda, \tilde\mu$. The terms relating to $\tilde\lambda, \tilde\mu$ in $\mathbf{P}_{\tilde{\mu},\tilde{\lambda}}\mathbf{u}$ are only some tangential differential operators $(\pa_{\boldsymbol{\tau}}\mathbf{u})^\perp$ and $\mathcal{M}_{\boldsymbol{\nu}}(\mathbf{u})$, and we discover that they would vanish when considering the surface integral as in the proof of Theorems {\rm \ref{thm2.4a}} and {\rm \ref{thm2.5a}}. Therefore, proceeding similarily we can eliminate the constraints on $\tilde\lambda$ and $\tilde\mu$ of substantial previous work. Further, in {\rm \cite{HJ17}} the elastic waves are completely decoupled to the compressional and shear waves under some geometric conditions, which can be easily deduced without any assumptions utilizing Lemma {\rm \ref{lem2.1}}. 
\end{remark}

\section{Two-layered Green's tensor}\label{sec3}
\setcounter{equation}{0}
In the follwing, we focus on the case that the transmission coefficient $a_0=1$. In this section, we shall prove the existence and uniqueness of the two-layered Green's tensor for problem \eqref{1.1}--\eqref{1.3} with $a_0=1$ by computing its explicit form and verify that it satisfies the radiation condition \eqref{1.3} using the steepest descent method.

\subsection{Two-dimensional case}\label{sec3.1}
\setcounter{equation}{0}
Given any source point $\mathbf{y}\in\R^2_+\cup\R^2_-$, the two-layered Green's tensor $\mathbf{G}(\mathbf{x},\mathbf{y})$ in consideration is the solution to the following scattering problem: 
\be\label{3.1}
  \left\{
  \begin{array}{ll}
  	\Delta^*_{\mathbf{x}}\mathbf{G}(\mathbf{x},\mathbf{y})+\rho_\pm\om^2\mathbf{G}(\mathbf{x},\mathbf{y})=-\delta(\mathbf{x},\mathbf{y})\mathbf{I},~~~&{\rm in}~\R^2_\pm,~\mathbf{x}\neq\mathbf{y}, \\
  	\left[\mathbf{G}(\mathbf{x},\mathbf{y})\right]=0,~\left[\mathbf{P}_{\tilde{\mu},\tilde{\lambda}}^{(\mathbf{x})}(\mathbf{G}(\mathbf{x},\mathbf{y}))\right]=0~~~&{\rm on}~\Gamma_0, \\
  	\lim\limits_{r\rightarrow\infty}\sqrt{r}\left(\pa_r\mathbf{G}_a(\mathbf{x},\mathbf{y})-ik_{a,\pm}\mathbf{G}_a(\mathbf{x},\mathbf{y})\right)=0,~a=p,s,~~~&\mathbf{\hat x}\in\Sp^1_\pm, 
  \end{array}
  \right.
\en
where $\delta(\mathbf{x},\mathbf{y})$ denotes the Dirac delta distribution, $\mathbf{I}$ is the $2\times2$ identity matrix, $[\cdot]$ is the jump across the interface $\Gamma_0$, and $\mathbf{G}_p:=\dive_{\mathbf{x}}\mathbf{G}, \mathbf{G}_s:=\dive^\perp_{\mathbf{x}}\mathbf{G}$. Note that $\Delta^*_{\mathbf{x}}, \mathbf{P}_{\tilde{\mu},\tilde{\lambda}}^{(\mathbf{x})},\dive_{\mathbf{x}}$ and $\dive_{\mathbf{x}}^\perp$ are acting on $\mathbf{G}_j, j=1,2$, where $\mathbf{G}=(\mathbf{G}_1,\mathbf{G}_2)$ with $\mathbf{G}_j$ being the column vectors. Set $G_{p,j}:=\dive_{\mathbf{x}}\mathbf{G}_j$ and $G_{s,j}:=\dive_{\mathbf{x}}^\perp\mathbf{G}_j$ with $j=1,2$. 

The solution $\mathbf{G}$ to problem \eqref{3.1} is clearly unique due to Theorem \ref{thm2.4}. To show the existence, we compute the explicit form of $\mathbf{G}$ and verify that it satisfies \eqref{3.1}. First, it should be pointed out that $\mathbf{G}$ defined by \eqref{3.1} is independent of the choice of $\tilde{\mu}$ and $\tilde{\lambda}$, since by the Helmholtz decomposition and Corollary \ref{cor2.3}, we see that \eqref{3.1} is equivalent to the problem: for $j=1,2$, 
\be\label{3.2}
  \left\{
  \begin{array}{ll}
  	\displaystyle\Delta_\mathbf{x}G_{p,j}(\mathbf{x},\mathbf{y})+k_{p,\pm}^2G_{p,j}(\mathbf{x},\mathbf{y})=-(2\mu+\lambda)^{-1}\dive_{\mathbf{x}}(\delta(\mathbf{x},\mathbf{y})\mathbf{e}_j),~~~&{\rm in}~\R^2_\pm,\\ 
  	\displaystyle\Delta_\mathbf{x}G_{s,j}(\mathbf{x},\mathbf{y})+k_{s,\pm}^2G_{s,j}(\mathbf{x},\mathbf{y})=-\mu^{-1}\dive_{\mathbf{x}}^\perp(\delta(\mathbf{x},\mathbf{y})\mathbf{e}_j),~~~&{\rm in}~\R^2_\pm,\\
  	\left[G_{p,j}(\mathbf{x},\mathbf{y})\right]=\left[G_{s,j}(\mathbf{x},\mathbf{y})\right]=0,~~~&{\rm on}~\Gamma_0, \\
  	\displaystyle\left[k_p^{-2}\pa_{\boldsymbol{\nu}(\mathbf{x})}G_{p,j}(\mathbf{x},\mathbf{y})-k_s^{-2}\pa_{\boldsymbol{\tau}(\mathbf{x})}G_{s,j}(\mathbf{x},\mathbf{y})\right]=0,~~~&{\rm on}~\Gamma_0, \\
  	\displaystyle\left[k_p^{-2}\pa_{\boldsymbol{\tau}(\mathbf{x})}G_{p,j}(\mathbf{x},\mathbf{y})+k_s^{-2}\pa_{\boldsymbol{\nu}(\mathbf{x})}G_{s,j}(\mathbf{x},\mathbf{y})\right]=0,~~~&{\rm on}~\Gamma_0, \\
  	\lim\limits_{r\rightarrow\infty}\sqrt{r}\left(\pa_rG_{a,j}(\mathbf{x},\mathbf{y})-ik_{a,\pm}G_{a,j}(\mathbf{x},\mathbf{y})\right)=0,~a=p,s,~~~&\mathbf{\hat x}\in\Sp^1_\pm, 
  \end{array}
  \right.
\en
where $\mathbf{e}_j$ is the $j$-th standard orthonormal basis of $\R^2$ and $k_a:=k_{a,\pm}$ in $\R^2_\pm$. 

Now we derive the explicit formula for $G_{a,j}$ with $a=p,s$ and $j=1,2$. To this end, we introduce some auxiliary functions $\wid{G}_{a,j}$: 
\be\label{3.3}
\left\{
\begin{array}{ll}
	\displaystyle\Delta_\mathbf{x}\wid G_{p,j}(\mathbf{x},\mathbf{y})+k_{p,\pm}^2\wid G_{p,j}(\mathbf{x},\mathbf{y})=-(2\mu+\lambda)^{-1}\dive_{\mathbf{x}}(\delta(\mathbf{x},\mathbf{y})\mathbf{e}_j),~~~&{\rm in}~\R^2_\pm,\\ 
	\displaystyle\Delta_\mathbf{x}\wid G_{s,j}(\mathbf{x},\mathbf{y})+k_{s,\pm}^2\wid G_{s,j}(\mathbf{x},\mathbf{y})=-\mu^{-1}\dive_{\mathbf{x}}^\perp(\delta(\mathbf{x},\mathbf{y})\mathbf{e}_j),~~~&{\rm in}~\R^2_\pm,\\
	\left[\wid G_{p,j}(\mathbf{x},\mathbf{y})\right]=\left[\wid G_{s,j}(\mathbf{x},\mathbf{y})\right]=0,~~~&{\rm on}~\Gamma_0, \\
	\displaystyle\left[k_p^{-2}\pa_{\boldsymbol{\nu}(\mathbf{x})}\wid G_{p,j}(\mathbf{x},\mathbf{y})\right]=\displaystyle\left[k_s^{-2}\pa_{\boldsymbol{\nu}(\mathbf{x})}\wid G_{s,j}(\mathbf{x},\mathbf{y})\right]=0,~~~&{\rm on}~\Gamma_0, \\ 
	\lim\limits_{r\rightarrow\infty}\sqrt{r}\left(\pa_r\wid G_{a,j}(\mathbf{x},\mathbf{y})-ik_{a,\pm}\wid G_{a,j}(\mathbf{x},\mathbf{y})\right)=0,~a=p,s,~~~&\mathbf{\hat x}\in\Sp^1_\pm. 
\end{array}
\right.
\en
These functions are basically the derivatives of the two-layered Green's function in the acoustic scattering, while the latter has been rigorously studied in \cite{HED05,CP17,LJ24}. Therefore, here we only refer the explicit form of $\wid G_{a,j}$, for more properties, such as the asymptotic behavior, see details in  \cite{HED05,CP17,LJ24}. 

For $a=p,s$, define $\beta_{a,\pm}:=\sqrt{\xi^2-k_{a,\pm}^2}=\sqrt{\xi-k_{a,\pm}}\sqrt{\xi+k_{a,\pm}}$, where the relevant branches are $-3\pi/2\leq\arg(\xi-k_{a,\pm})<\pi/2$ for $\sqrt{\xi-k_{a,\pm}}$ and $-\pi/2\leq\arg(\xi+k_{a,\pm})<3\pi/2$ for $\sqrt{\xi+k_{a,\pm}}$. In particular, for real $\xi$ we have $\beta_{a,\pm}(\xi)=\sqrt{\xi^2-k_{a,\pm}^2}$ if $|\xi|\geq k_{a,\pm}$ and $\beta_{a,\pm}(\xi)=-i\sqrt{k_{a,\pm}^2-\xi^2}$ if $|\xi|<k_{a,\pm}$. The branch cuts and domain of definition of $\beta_{a,\pm}$ are depicted in Figure \ref{fig3.1}. Further, set  
$$
R_a(\beta_{a,+},\beta_{a,-}):=\frac{k_{a,+}^{-2}\beta_{a,+}-k_{a,-}^{-2}\beta_{a,-}}{k_{a,+}^{-2}\beta_{a,+}+k_{a,-}^{-2}\beta_{a,-}},~~~
T_a(\beta_{a,+},\beta_{a,-}):=\frac{2k_{a,+}^{-2}\beta_{a,+}}{k_{a,+}^{-2}\beta_{a,+}+k_{a,-}^{-2}\beta_{a,-}}. 
$$
Denote by $\Phi_{k_{a,\pm}}(\mathbf{x},\mathbf{y}):=\frac{i}{4}H_0^{(1)}(k_{a,\pm}|\mathbf{x}-\mathbf{y}|)$, $a=p,s$,  the fundamental solution for $\Delta+k_{a,\pm}^2$ in $\R^2$. It is known that: 

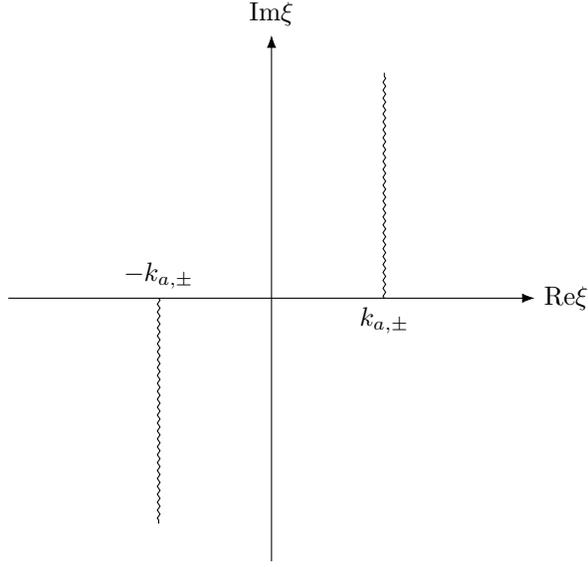
\begin{figure}\label{fig3.1}
	\begin{adjustbox}{margin=3cm 0cm 0cm 0cm}
		\begin{tikzpicture}
			\draw[-Latex] (-3.5,0) -- (3.5,0) node[right]{$\Rt\xi$}; 
			\draw[-Latex] (0,-3.5) -- (0,3.5) node[above]{$\I\xi$}; 
			\draw[decorate,decoration={zigzag, amplitude=0.5pt, segment length=3pt}] (1.5,0) node[below]{$k_{a,\pm}$} -- (1.5,3); 
			\draw[decorate,decoration={zigzag, amplitude=0.5pt, segment length=3pt}] (-1.5,0) node[above]{$-k_{a,\pm}$} -- (-1.5,-3); 
		\end{tikzpicture}
	\end{adjustbox}
	\caption{Branch cuts and domain of definition for the function $\beta_{a,\pm}$.}
\end{figure}

\begin{theorem}\label{thm3.1}
	For $a=p,s$ and $j=1,2$, the function $\wid G_{a,j}$ exists and is unique. Further, they have the forms 
	\begin{align*}
	  &\wid G_{p,1}(\mathbf{x},\mathbf{y})=\frac{1}{2\mu+\lambda} \\
	  &\left\{
	  \begin{array}{ll}
	  	\displaystyle\frac{\pa\Phi_{k_{p,+}}(\mathbf{x},\mathbf{y})}{\pa x_1}+\frac{i}{4\pi}\int_{-\infty}^{+\infty}\frac{\xi}{\beta_{p,+}}R_p(\beta_{p,+},\beta_{p,-})e^{-\beta_{p,+}(x_2+y_2)}e^{i\xi(x_1-y_1)}d\xi,&\mathbf{x}\in\R^2_+,\;\mathbf{y}\in\R^2_+,\\
	  	\displaystyle\frac{i}{4\pi}\int_{-\infty}^{+\infty}\frac{\xi}{\beta_{p,+}}T_p(\beta_{p,+},\beta_{p,-})e^{\beta_{p,-}x_2-\beta_{p,+}y_2}e^{i\xi(x_1-y_1)}d\xi,&\mathbf{x}\in\R^2_-,\;\mathbf{y}\in\R^2_+, \\
	  	\displaystyle-\frac{i}{4\pi}\int_{-\infty}^{+\infty}\frac{\xi}{\beta_{p,-}}(T_p(\beta_{p,+},\beta_{p,-})-2)e^{-\beta_{p,+}x_2+\beta_{p,-}y_2}e^{i\xi(x_1-y_1)}d\xi,&\mathbf{x}\in\R^2_+,\;\mathbf{y}\in\R^2_-, \\
	  	\displaystyle\frac{\pa\Phi_{k_{p,-}}(\mathbf{x},\mathbf{y})}{\pa x_1}-\frac{i}{4\pi}\int_{-\infty}^{+\infty}\frac{\xi}{\beta_{p,-}}R_p(\beta_{p,+},\beta_{p,-})e^{\beta_{p,-}(x_2+y_2)}e^{i\xi(x_1-y_1)}d\xi,&\mathbf{x}\in\R^2_-,\;\mathbf{y}\in\R^2_-, 
	  \end{array}
	  \right.
	\end{align*}
	\begin{align*}
	&\wid G_{s,1}(\mathbf{x},\mathbf{y})=\frac{1}{\mu} \\
	&\left\{
	\begin{array}{ll}
		\displaystyle\frac{\pa\Phi_{k_{s,+}}(\mathbf{x},\mathbf{y})}{\pa x_2}+\frac{1}{4\pi}\int_{-\infty}^{+\infty}R_s(\beta_{s,+},\beta_{s,-})e^{-\beta_{s,+}(x_2+y_2)}e^{i\xi(x_1-y_1)}d\xi,&\mathbf{x}\in\R^2_+,\;\mathbf{y}\in\R^2_+,\\
		\displaystyle\frac{1}{4\pi}\int_{-\infty}^{+\infty}T_s(\beta_{s,+},\beta_{s,-})e^{\beta_{s,-}x_2-\beta_{s,+}y_2}e^{i\xi(x_1-y_1)}d\xi,&\mathbf{x}\in\R^2_-,\;\mathbf{y}\in\R^2_+, \\
		\displaystyle\frac{1}{4\pi}\int_{-\infty}^{+\infty}(T_s(\beta_{s,+},\beta_{s,-})-2)e^{-\beta_{s,+}x_2+\beta_{s,-}y_2}e^{i\xi(x_1-y_1)}d\xi,&\mathbf{x}\in\R^2_+,\;\mathbf{y}\in\R^2_-, \\
		\displaystyle\frac{\pa\Phi_{k_{s,-}}(\mathbf{x},\mathbf{y})}{\pa x_2}+\frac{1}{4\pi}\int_{-\infty}^{+\infty}R_s(\beta_{s,+},\beta_{s,-})e^{\beta_{s,-}(x_2+y_2)}e^{i\xi(x_1-y_1)}d\xi,&\mathbf{x}\in\R^2_-,\;\mathbf{y}\in\R^2_-,
	\end{array}
	\right.
	\end{align*}
	\begin{align*}
	&\wid G_{p,2}(\mathbf{x},\mathbf{y})=\frac{1}{2\mu+\lambda} \\
	&\left\{
	\begin{array}{ll}
		\displaystyle\frac{\pa\Phi_{k_{p,+}}(\mathbf{x},\mathbf{y})}{\pa x_2}+\frac{1}{4\pi}\int_{-\infty}^{+\infty}R_p(\beta_{p,+},\beta_{p,-})e^{-\beta_{p,+}(x_2+y_2)}e^{i\xi(x_1-y_1)}d\xi,&\mathbf{x}\in\R^2_+,\;\mathbf{y}\in\R^2_+,\\
		\displaystyle\frac{1}{4\pi}\int_{-\infty}^{+\infty}T_p(\beta_{p,+},\beta_{p,-})e^{\beta_{p,-}x_2-\beta_{p,+}y_2}e^{i\xi(x_1-y_1)}d\xi,&\mathbf{x}\in\R^2_-,\;\mathbf{y}\in\R^2_+, \\
		\displaystyle\frac{1}{4\pi}\int_{-\infty}^{+\infty}(T_p(\beta_{p,+},\beta_{p,-})-2)e^{-\beta_{p,+}x_2+\beta_{p,-}y_2}e^{i\xi(x_1-y_1)}d\xi,&\mathbf{x}\in\R^2_+,\;\mathbf{y}\in\R^2_-, \\
		\displaystyle\frac{\pa\Phi_{k_{p,-}}(\mathbf{x},\mathbf{y})}{\pa x_2}+\frac{1}{4\pi}\int_{-\infty}^{+\infty}R_p(\beta_{p,+},\beta_{p,-})e^{\beta_{p,-}(x_2+y_2)}e^{i\xi(x_1-y_1)}d\xi,&\mathbf{x}\in\R^2_-,\;\mathbf{y}\in\R^2_-,
	\end{array}
	\right.
	\end{align*}
	and
	\begin{align*}
	&\wid G_{s,2}(\mathbf{x},\mathbf{y})=-\frac{1}{\mu} \\
	&\left\{
	\begin{array}{ll}
		\displaystyle\frac{\pa\Phi_{k_{s,+}}(\mathbf{x},\mathbf{y})}{\pa x_1}+\frac{i}{4\pi}\int_{-\infty}^{+\infty}\frac{\xi}{\beta_{s,+}}R_s(\beta_{s,+},\beta_{s,-})e^{-\beta_{s,+}(x_2+y_2)}e^{i\xi(x_1-y_1)}d\xi,&\mathbf{x}\in\R^2_+,\;\mathbf{y}\in\R^2_+,\\
		\displaystyle\frac{i}{4\pi}\int_{-\infty}^{+\infty}\frac{\xi}{\beta_{s,+}}T_s(\beta_{s,+},\beta_{s,-})e^{\beta_{s,-}x_2-\beta_{s,+}y_2}e^{i\xi(x_1-y_1)}d\xi,&\mathbf{x}\in\R^2_-,\;\mathbf{y}\in\R^2_+, \\
		\displaystyle-\frac{i}{4\pi}\int_{-\infty}^{+\infty}\frac{\xi}{\beta_{s,-}}(T_s(\beta_{s,+},\beta_{s,-})-2)e^{-\beta_{s,+}x_2+\beta_{s,-}y_2}e^{i\xi(x_1-y_1)}d\xi,&\mathbf{x}\in\R^2_+,\;\mathbf{y}\in\R^2_-, \\
		\displaystyle\frac{\pa\Phi_{k_{s,-}}(\mathbf{x},\mathbf{y})}{\pa x_1}-\frac{i}{4\pi}\int_{-\infty}^{+\infty}\frac{\xi}{\beta_{s,-}}R_s(\beta_{s,+},\beta_{s,-})e^{\beta_{s,-}(x_2+y_2)}e^{i\xi(x_1-y_1)}d\xi,&\mathbf{x}\in\R^2_-,\;\mathbf{y}\in\R^2_-. 
	\end{array}
	\right.
	\end{align*}
\end{theorem}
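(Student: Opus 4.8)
We sketch the argument. The plan is to construct each $\wid G_{a,j}$ in closed form by recognising it as a first order derivative of the two-layered \emph{scalar} Green's function, to verify directly that the resulting formula solves \eqref{3.3}, and then to read off the explicit expressions by differentiating the known formula; uniqueness will be inherited from the scalar transmission problem. For $a=p,s$ let $\mathcal{G}_a(\mathbf{x},\mathbf{y})$ denote the two-layered acoustic Green's function, i.e. the unique radiating solution of $\Delta_{\mathbf{x}}\mathcal{G}_a+k_{a,\pm}^2\mathcal{G}_a=-\delta(\mathbf{x},\mathbf{y})$ in $\R^2_\pm$ subject to $[\mathcal{G}_a]=[k_a^{-2}\pa_{\boldsymbol{\nu}(\mathbf{x})}\mathcal{G}_a]=0$ on $\Gamma_0$ and the Sommerfeld condition $\sqrt{r}\,(\pa_r\mathcal{G}_a-ik_{a,\pm}\mathcal{G}_a)\to0$ in $\Sp^1_\pm$; its existence, uniqueness, explicit Sommerfeld-integral form in terms of $R_a$ and $T_a$, and uniform far-field asymptotics are established in \cite{HED05,CP17,LJ24}, and one has $\Phi_{k_{a,\pm}}(\mathbf{x},\mathbf{y})=\frac{1}{4\pi}\int_{-\infty}^{+\infty}\beta_{a,\pm}^{-1}e^{-\beta_{a,\pm}|x_2-y_2|}e^{i\xi(x_1-y_1)}\,d\xi$.

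Since $\dive_{\mathbf{x}}(\delta\,\mathbf{e}_1)=\pa_{x_1}\delta$, $\dive_{\mathbf{x}}(\delta\,\mathbf{e}_2)=\pa_{x_2}\delta$, $\dive^\perp_{\mathbf{x}}(\delta\,\mathbf{e}_1)=\pa_{x_2}\delta$ and $\dive^\perp_{\mathbf{x}}(\delta\,\mathbf{e}_2)=-\pa_{x_1}\delta$, the right-hand sides of \eqref{3.3} are dipole sources, and I would set
\begin{align*}
\wid G_{p,1}=\frac{1}{2\mu+\lambda}\,\pa_{x_1}\mathcal{G}_p,\qquad \wid G_{p,2}=-\frac{1}{2\mu+\lambda}\,\pa_{y_2}\mathcal{G}_p,\\
\wid G_{s,1}=-\frac1\mu\,\pa_{y_2}\mathcal{G}_s,\qquad \wid G_{s,2}=-\frac1\mu\,\pa_{x_1}\mathcal{G}_s.
\end{align*}
One then checks that each of these solves the correct Helmholtz equation with the prescribed dipole source, using $\pa_{y_2}\delta(\mathbf{x}-\mathbf{y})=-\pa_{x_2}\delta(\mathbf{x}-\mathbf{y})$. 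For the transmission conditions: the tangential operator $\pa_{x_1}$ commutes with the one-sided traces on $\Gamma_0$ (by elliptic regularity of $\mathcal{G}_a$ away from $\mathbf{y}$), so $[\pa_{x_1}\mathcal{G}_a]=\pa_{x_1}[\mathcal{G}_a]=0$ and $[k_a^{-2}\pa_{\boldsymbol{\nu}}\pa_{x_1}\mathcal{G}_a]=\pa_{x_1}[k_a^{-2}\pa_{\boldsymbol{\nu}}\mathcal{G}_a]=0$; and $\pa_{y_2}$ differentiates a parameter that remains off $\Gamma_0$, so applying $\pa_{y_2}$ to the identities $[\mathcal{G}_a]_{\mathbf{x}}=[k_a^{-2}\pa_{\boldsymbol{\nu}(\mathbf{x})}\mathcal{G}_a]_{\mathbf{x}}=0$ (valid for every $\mathbf{y}$) again gives $0$. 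Finally $\pa_{x_1}$ and $\pa_{y_2}$ commute with $\pa_r-ik_{a,\pm}$ up to remainders that vanish after multiplication by $\sqrt{r}$, by the uniform far-field expansion of $\mathcal{G}_a$; hence the radiation condition is preserved. This yields existence.

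The explicit formulas then follow by inserting the known expression for $\mathcal{G}_a$: applying $\pa_{x_1}$ brings a factor $i\xi$ into each Sommerfeld integral and turns the free-space term into $\pa\Phi_{k_{a,\pm}}/\pa x_1$, while applying $\pa_{y_2}$ brings a factor $\mp\beta_{a,\pm}$ — the sign being dictated by the $y_2$-exponent in each of the four combinations $\mathbf{x}\in\R^2_\pm$, $\mathbf{y}\in\R^2_\pm$ — which cancels the $\beta_{a,\pm}^{-1}$ weight, and turns the free-space term into $-\pa\Phi_{k_{a,\pm}}/\pa y_2=\pa\Phi_{k_{a,\pm}}/\pa x_2$ (since $\Phi_{k_{a,\pm}}$ depends only on $|\mathbf{x}-\mathbf{y}|$). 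Working through the four cases reproduces the stated expressions; equivalently, one may derive them from scratch by Fourier transforming \eqref{3.3} in $x_1$, solving the one-dimensional transmission problems mode by mode (which produces $R_a$ and $T_a$) and recognising the free-space contribution via the Sommerfeld representation of $\Phi_{k_{a,\pm}}$. Uniqueness of $\wid G_{a,j}$ reduces to uniqueness for the homogeneous scalar two-layered Helmholtz transmission problem with the Sommerfeld condition in each half-space, which is contained in the uniqueness of $\mathcal{G}_a$ proved in \cite{HED05,CP17,LJ24} (or follows from a Rellich-type argument as in the proof of Theorem \ref{thm2.4}).

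The main subtlety — and the reason the normal-source cases $\wid G_{p,2}$, $\wid G_{s,1}$ are handled differently from the tangential ones — is that one may \emph{not} differentiate $\mathcal{G}_a$ in the normal variable $x_2$: the operator $\pa_{x_2}$ destroys the transmission conditions, since only the weighted normal derivative $k_a^{-2}\pa_{\boldsymbol{\nu}}\mathcal{G}_a$, not $\pa_{\boldsymbol{\nu}}\mathcal{G}_a$ itself, is continuous across $\Gamma_0$. Producing the normal-derivative sources by differentiating instead in the source coordinate $y_2$ circumvents this, because $y_2$-differentiation acts on a parameter and leaves the $\mathbf{x}$-transmission conditions intact. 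The remaining work — tracking the signs and $\beta_{a,\pm}^{-1}$ factors across the four region cases, and justifying that $\pa_{x_1}$ and $\pa_{y_2}$ may be interchanged with the radiation limit — is routine given the properties of $\mathcal{G}_a$ recalled above.
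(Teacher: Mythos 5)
Your proposal is correct and follows essentially the same route as the paper, which states Theorem \ref{thm3.1} without proof beyond the remark that the $\wid G_{a,j}$ are ``basically the derivatives of the two-layered Green's function in the acoustic scattering'' together with citations to \cite{HED05,CP17,LJ24}; your formulas $\wid G_{p,1}=(2\mu+\lambda)^{-1}\pa_{x_1}\mathcal{G}_p$, $\wid G_{p,2}=-(2\mu+\lambda)^{-1}\pa_{y_2}\mathcal{G}_p$, $\wid G_{s,1}=-\mu^{-1}\pa_{y_2}\mathcal{G}_s$, $\wid G_{s,2}=-\mu^{-1}\pa_{x_1}\mathcal{G}_s$ do reproduce the stated expressions (including the signs and the $-(T_a-2)$, $-R_a$ coefficients in the source-below cases). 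Your observation that the normal-derivative sources must be produced by differentiating in $y_2$ rather than $x_2$ --- since only the weighted normal derivative $k_a^{-2}\pa_{\boldsymbol{\nu}}\mathcal{G}_a$ is continuous across $\Gamma_0$ --- is a genuine precision that the paper leaves implicit.
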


For $U_{p,j}:=G_{p,j}-\wid G_{p,j}$, $a=p,s$, $j=1,2$, it is then deduced that 
\be\label{3.4}
\left\{
\begin{array}{ll}
	\displaystyle\Delta_\mathbf{x}U_{p,j}(\mathbf{x},\mathbf{y})+k_{p,\pm}^2U_{p,j}(\mathbf{x},\mathbf{y})=0,&{\rm in}\;\R^2_\pm,~ \\ 
	\displaystyle\Delta_\mathbf{x}U_{s,j}(\mathbf{x},\mathbf{y})+k_{s,\pm}^2U_{s,j}(\mathbf{x},\mathbf{y})=0,&{\rm in}\;\R^2_\pm,~ \\
	\left[U_{p,j}(\mathbf{x},\mathbf{y})\right]=\left[U_{s,j}(\mathbf{x},\mathbf{y})\right]=0,&{\rm on}\;\Gamma_0,~ \\
	\displaystyle\left[k_p^{-2}\pa_{\boldsymbol{\nu}(\mathbf{x})}U_{p,j}(\mathbf{x},\mathbf{y})-k_s^{-2}\pa_{\boldsymbol{\tau}(\mathbf{x})}U_{s,j}(\mathbf{x},\mathbf{y})\right]=[k_{s}^{-2}\pa_{\boldsymbol{\tau}(\mathbf{x})}\wid G_{s,j}(\mathbf{x},\mathbf{y})],&{\rm on}\;\Gamma_0,~ \\
	\displaystyle\left[-k_p^{-2}\pa_{\boldsymbol{\tau}(\mathbf{x})}U_{p,j}(\mathbf{x},\mathbf{y})-k_s^{-2}\pa_{\boldsymbol{\nu}(\mathbf{x})}U_{s,j}(\mathbf{x},\mathbf{y})\right]=[k_{p}^{-2}\pa_{\boldsymbol{\tau}(\mathbf{x})}\wid G_{p,j}(\mathbf{x},\mathbf{y})],&{\rm on}\;\Gamma_0,~ \\
	\lim\limits_{r\rightarrow\infty}\sqrt{r}\left(\pa_rU_{a,j}(\mathbf{x},\mathbf{y})-ik_{a,\pm}U_{a,j}(\mathbf{x},\mathbf{y})\right)=0,~a=p,s,&\mathbf{\hat x}\in\Sp^1_\pm.~ 
\end{array}
\right.
\en
Again, $U_{a,j}$ is uniquely defined by Theorem \ref{thm2.4}. We now utilize the Fourier transform and its inverse to derive the expressions of $U_{a,j}$. 

Take the Fourier transform on the equations of \eqref{3.4} in the $x_1$-variable to obtain that 
$$
 \pa_{x_2}^2\hat U_{a,j}(\xi,x_2,y_2)+(k_{a}^2-\xi^2)\hat U_{a,j}(\xi,x_2,y_2)=0
$$
for $a=p,s$ and $j=1,2$. From the basic knowledge of ODEs and the radiation condition, we find that 
\be\label{3.5}
\hat U_{a,j}(\xi,x_2,y_2)=
\left\{
\begin{array}{ll}
	A_{a,j}^+e^{-\beta_{a,+}x_2},~~~&x_2>0,\;y_2>0,\\
	A_{a,j}^-e^{\beta_{a,-}x_2},~~~&x_2<0,\;y_2>0, \\
	B_{a,j}^+e^{-\beta_{a,+}x_2},~~~&x_2>0,\;y_2<0, \\
	B_{a,j}^-e^{\beta_{a,-}x_2},~~~&x_2<0,\;y_2<0. 
\end{array}
\right.
\en
Since $\hat U_{a,j}$ is continuous at $x_2=0$, it is deduced that $A_{a,j}^+=A_{a,j}^-=:A_{a,j}$ and $B_{a,j}^+=B_{a,j}^-=:B_{a,j}$ for $a=p,s$ and $j=1,2$. Further, the boundary conditions concerning the derivatives in \eqref{3.4} after the Fourier transform become 
\begin{align*}
  &\left[k_p^{-2}\pa_{x_2}\hat U_{p,j}(\xi,x_2,y_2)-i\xi k_s^{-2}\hat U_{s,j}(\xi,x_2,y_2)\right]=[i\xi k_{s}^{-2}\hat{\wid G}_{s,j}(\xi,x_2,y_2)], \\
  &\left[i\xi k_p^{-2}\hat U_{p,j}(\xi,x_2,y_2)+k_s^{-2}\pa_{x_2}\hat U_{s,j}(\xi,x_2,y_2)\right]=-[i\xi k_{p}^{-2}\hat{\wid G}_{p,j}(\xi,x_2,y_2)]. 
\end{align*}
Substituting the expressions \eqref{3.5} into the above identites and solving the resulting system of equations for the $A_{a,j}$ and $B_{a,j}$, we readily get 
\ben
  &&A_{p,1}=\frac{C_0\xi}{D}i\left[k_{s,+}^{-2}\beta_{s,+}e^{-\beta_{s,+}y_2}-\frac{(k_{s,+}^{-2}-k_{s,-}^{-2})k_{p,+}^{-2}\xi^2}{k_{p,+}^{-2}\beta_{p,+}+k_{p,-}^{-2}\beta_{p,-}}e^{-\beta_{p,+}y_2}\right], \\
  &&B_{p,1}=\frac{C_0\xi}{D}i\left[-k_{s,-}^{-2}\beta_{s,-}e^{\beta_{s,-}y_2}-\frac{(k_{s,+}^{-2}-k_{s,-}^{-2})k_{p,-}^{-2}\xi^2}{k_{p,+}^{-2}\beta_{p,+}+k_{p,-}^{-2}\beta_{p,-}}e^{\beta_{p,-}y_2}\right], \\
  &&A_{s,1}=\frac{C_0\xi}{D}\left[-\frac{(k_{p,+}^{-2}-k_{p,-}^{-2})k_{s,+}^{-2}\beta_{s,+}\xi }{k_{s,+}^{-2}\beta_{s,+}+k_{s,-}^{-2}\beta_{s,-}}e^{-\beta_{s,+}y_2}+k_{p,+}^{-2}\xi e^{-\beta_{p,+}y_2}\right], \\
  &&B_{s,1}=\frac{C_0\xi}{D}\left[\frac{(k_{p,+}^{-2}-k_{p,-}^{-2})k_{s,-}^{-2}\beta_{s,-}\xi }{k_{s,+}^{-2}\beta_{s,+}+k_{s,-}^{-2}\beta_{s,-}}e^{\beta_{s,-}y_2}+k_{p,-}^{-2}\xi e^{\beta_{p,-}y_2}\right], \\
  &&A_{p,2}=\frac{C_0\xi}{D}\left[k_{s,+}^{-2}\xi e^{-\beta_{s,+}y_2}-\frac{(k_{s,+}^{-2}-k_{s,-}^{-2})k_{p,+}^{-2}\beta_{p,+}\xi }{k_{p,+}^{-2}\beta_{p,+}+k_{p,-}^{-2}\beta_{p,-}}e^{-\beta_{p,+}y_2}\right], \\
  &&B_{p,2}=\frac{C_0\xi}{D}\left[k_{s,-}^{-2}\xi e^{\beta_{s,-}y_2}+\frac{(k_{s,+}^{-2}-k_{s,-}^{-2})k_{p,-}^{-2}\beta_{p,-}\xi }{k_{p,+}^{-2}\beta_{p,+}+k_{p,-}^{-2}\beta_{p,-}}e^{\beta_{p,-}y_2}\right], \\
  &&A_{s,2}=\frac{C_0\xi}{D}i\left[\frac{(k_{p,+}^{-2}-k_{p,-}^{-2})k_{s,+}^{-2}\xi^2}{k_{s,+}^{-2}\beta_{s,+}+k_{s,-}^{-2}\beta_{s,-}}e^{-\beta_{s,+}y_2}-k_{p,+}^{-2}\beta_{p,+}e^{-\beta_{p,+}y_2}\right], \\
  &&B_{s,2}=\frac{C_0\xi}{D}i\left[\frac{(k_{p,+}^{-2}-k_{p,-}^{-2})k_{s,-}^{-2}\xi^2}{k_{s,+}^{-2}\beta_{s,+}+k_{s,-}^{-2}\beta_{s,-}}e^{\beta_{s,-}y_2}+k_{p,-}^{-2}\beta_{p,-}e^{\beta_{p,-}y_2}\right], 
\enn
where the constant $\displaystyle C_0:=\frac{1}{\rho_+\om^2}-\frac{1}{\rho_-\om^2}$ and 
\ben
\begin{aligned}
	D:=&
\left|
\begin{array}{cc}
	i(k_{p,+}^{-2}\beta_{p,+}+k_{p,-}^{-2}\beta_{p,-}) & -(k_{s,+}^{-2}-k_{s,-}^{-2})\xi  \\
	(k_{p,+}^{-2}-k_{p,-}^{-2})\xi  & i(k_{s,+}^{-2}\beta_{s,+}+k_{s,-}^{-2}\beta_{s,-})
\end{array}
\right| \\
=&-(k_{p,+}^{-2}\beta_{p,+}+k_{p,-}^{-2}\beta_{p,-})(k_{s,+}^{-2}\beta_{s,+}+k_{s,-}^{-2}\beta_{s,-})+(k_{p,+}^{-2}-k_{p,-}^{-2})(k_{s,+}^{-2}-k_{s,-}^{-2})\xi^2. 
\end{aligned}
\enn
Note that it is easy to deduce that $D$ have no zero in the real line. The function $U_{a,j}$ is then obtained from its Fourier transform by 
\be\label{3.6}
  U_{a,j}(\mathbf{x},\mathbf{y})=\frac{1}{2\pi}\int_{-\infty}^{+\infty}\hat{U}_{a,j}(\xi,x_2,y_2)e^{i(x_1-y_1)\xi}d\xi 
\en
for $a=p,s$ and $j=1,2$. 

It remains to verify that $U_{a,j}$ given by \eqref{3.6} actually satisfies \eqref{3.4}, while the equations and the boundary conditions in \eqref{3.4} clearly hold. Thus, it suffices to check the radiation condition. To this end, we consider the integral \eqref{3.6} in the whole complex plane and derive the asymptotic behavior of $U_{a,j}$ as $r\rightarrow\infty$ through the standard procedure using the steepest descent method, for which we refer to \cite{HED05,NB84,LJ24,CP17}. The steepest-descents formulas we mainly used are given in \cite[Appendix A]{CP17}. 

The analysis presented here follows closely to \cite[Section 2.3.4]{CP17}. We start with the asymptotic behavior of $U_{p,1}$ when $\mathbf{x},\mathbf{y}\in\R^2_+$. Expressing $\mathbf{x}$ in polar coordinates, i.e., $\mathbf{x}=r\hat{\mathbf{x}}=r(\cos\theta_x,\sin\theta_x)$, $\theta_x\in(0,\pi)$, and letting $\eta=\xi/k_{p,+}$, \eqref{3.6} becomes 
\be\label{3.7}
  U_{p,1}(\mathbf{x},\mathbf{y})=\frac{k_{p,+}}{2\pi}\int_{-\infty}^{+\infty}A_{p,1}(k_{p,+}\eta,y_2)e^{-ik_{p,+}y_1\eta}e^{r\phi(\eta)}d\eta
\en
where $\phi(\eta)=ik_{p,+}(\eta\cos\theta_x+i\sqrt{\eta^2-1}\sin\theta_x)$ and $\sqrt{\eta^2-1}=\beta_{p,+}(k_{p,+}\eta)/k_{p,+}$. Note that $\phi(\eta)$ here is the same as in \cite[Section 2.3.4]{CP17}, thus our analysis concerning the saddle point and the path of steepest descent is also the same. We know from \cite{CP17} that there is only one saddle point $\eta_0=\cos\theta_x$ for $\phi$ with $\phi(\eta_0)=ik_{p,+}$ and $\phi''(\eta_0)=-ik_{p,+}/\sin^2\theta_x$, and the directions of steepest descent at $\eta_0$ are given by $\alpha_0=3\pi/4$ and $\alpha_1=7\pi/4$. Moreover, the path of steepest descent $\mathcal{D}$ approaches the line 
$$
  \I\eta=\frac{\cos\theta_x}{\sin\theta_x}\Rt\eta-\frac{1}{\sin\theta_x}~~{\rm as}~~|\eta|\rightarrow\infty
$$
for $\Rt\eta>0$ and  
$$
\I\eta=-\frac{\cos\theta_x}{\sin\theta_x}\Rt\eta+\frac{1}{\sin\theta_x}~~{\rm as}~~|\eta|\rightarrow\infty
$$
for $\Rt\eta<0$, while it intersects the real axis at two points $\eta=\cos\theta_x,1/\cos\theta_x$. The paths of steepest descent with $x_1>0$ and $x_1<0$ in one case are depicted in Figure 3.2. 

\begin{figure}\label{fig3.2}
\begin{adjustbox}{margin=-0.5cm 0cm 0cm 0cm}
\begin{tikzpicture}
	\tikzmath{\value=sqrt(3)/20;}
	\draw[-Latex] (-6,0) -- (6,0) node[right]{$\Rt\eta$}; 
	\draw[-Latex] (0,-4.5) -- (0,4.5) node[above]{$\I\eta$}; 
	\draw[decorate,decoration={zigzag, amplitude=0.5pt, segment length=3pt}] (1.25,0) node[font=\tiny, below]{$\frac{k_{p,-}}{k_{p,+}}$} -- (1.25,3.5); 
	\draw[decorate,decoration={zigzag, amplitude=0.5pt, segment length=3pt}] (-1.25,0) node[font=\tiny,above]{$-\frac{k_{p,-}}{k_{p,+}}$} -- (-1.25,-3.5); 
	\draw[decorate,decoration={zigzag, amplitude=0.5pt, segment length=3pt}] (2.5,0) node[below]{$1$} -- (2.5,3.5); 
	\draw[decorate,decoration={zigzag, amplitude=0.5pt, segment length=3pt}] (-2.5,0) node[above]{$-1$} -- (-2.5,-3.5); 
	\draw[decorate,decoration={zigzag, amplitude=0.5pt, segment length=3pt}] (3.75,0) node[font=\tiny,below]{$\frac{k_{s,-}}{k_{p,+}}$} -- (3.75,3.5); 
	\draw[decorate,decoration={zigzag, amplitude=0.5pt, segment length=3pt}] (-3.75,0) node[font=\tiny,above]{$-\frac{k_{s,-}}{k_{p,+}}$} -- (-3.75,-3.5); 
	\draw[decorate,decoration={zigzag, amplitude=0.5pt, segment length=3pt}] (5,0) node[font=\tiny,below]{$\frac{k_{s,+}}{k_{p,+}}$} -- (5,3.5); 
	\draw[decorate,decoration={zigzag, amplitude=0.5pt, segment length=3pt}] (-5,0) node[font=\tiny,above]{$-\frac{k_{s,+}}{k_{p,+}}$} -- (-5,-3.5); 
	\draw[red, smooth, -Latex] (-0.8,3.4) -- (0.7,0) node[font=\tiny, black, below left]{$\cos\theta_x$} to[out=-atan(34/15), in=180] (2.3,-1.4) to[out=0,in=180+atan(9/4)] (4.6,0) node[font=\tiny,black, above left]{$\frac{1}{\cos\theta_x}$} -- (4.95,0.7875) -- (4.95,\value) arc (120:420:0.1) -- (5.05,1.0125) -- (6.2,3.4) node[black,above]{($x_1>0$)} node[black, below right]{$\mathcal{D}$}; 
	\draw[red, smooth,Latex-] (0.8,-3.4) node[black, above right]{$\mathcal{D}$} -- (-0.7,0) node[font=\tiny, black, above right]{$\cos\theta_x$} to[out=180-atan(34/15), in=0] (-2.3,1.4) to[out=180,in=atan(9/4)] (-4.6,0) node[font=\tiny, black, below right]{$\frac{1}{\cos\theta_x}$} -- (-4.95,-0.7875) -- (-4.95,-\value) arc (-60:240:0.1) -- (-5.05,-1.0125) -- (-6.2,-3.4) node[black,below]{$(x_1<0)$}; 
\end{tikzpicture}
\end{adjustbox}
\caption{Paths of descent for the integrals \eqref{3.7} in the case that $\sin\theta_x>0$ and $|\cos\theta_x|<\frac{k_{p,-}}{k_{p,+}}<1$,  $\frac{k_{s,-}}{k_{p,+}}<\left|\frac{1}{\cos\theta_x}\right|<\frac{k_{s,+}}{k_{p,+}}$.}
\end{figure}
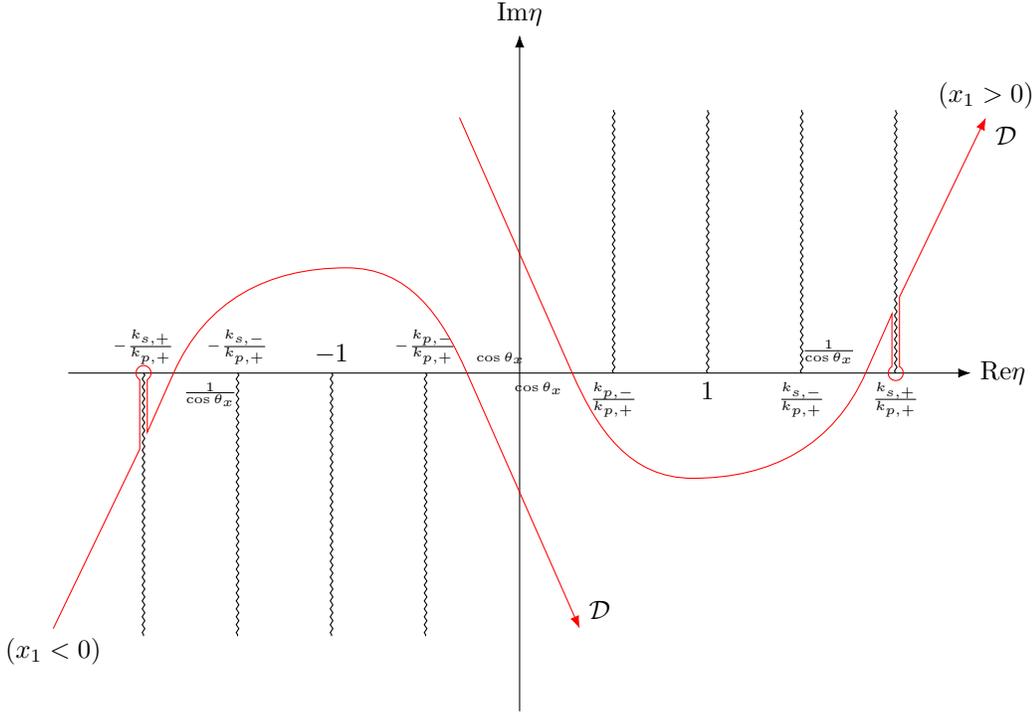

There are three possible contributions to the asymptotic expansion of \eqref{3.7}. The first contribution comes from the poles of $A_{p,1}$. We need to calculate the residues corresponding to the poles of $A_{p,1}$ when deforming the oringinal contour to the path of steepest descent $\mathcal{D}$. Since by \cite[Section 2.3.4]{CP17} we know that $k_{a,+}^{-2}\beta_{a,+}+k_{a,-}^{-2}\beta_{a,-},a=p,s$ have no zero in the complex plane, the poles can only be the zeros of $D$. Let $\wid\eta$ be a zero of $D(k_{p,+}\eta)$ lying in the region enclosed by the real line and the path $\mathcal{D}$, then by direct calculation the corresponding residue would be of $O(|f(r)|e^{r\Rt\phi(\wid\eta)})$ as $r\rightarrow\infty$, where $f$ is a polynomial. Since $\wid\eta$ is in the region between the real line and the path of steepest descent, we can connect $\eta_0$ and $\wid\eta$ by first moving along the path $\mathcal{D}$ and then moving horizontally without intersecting with the branch cut of $\sqrt{\eta^2-1}$, while from \cite[Section 7.2]{NB84} it is known that through this path the real part of $\phi$ is non-increasing, which implies that $\Rt\phi(\wid\eta)<\Rt\phi(\eta_0)=0$. Thus, the residue in consideration decays exponentially. 

The second is the saddle point contribution, which is calculated by $(A.2)$ or $(A.3)$ in \cite{CP17}. If the saddle point $\eta_0=\pm k/k_{p,+}$ with $k\in\{k_{p,\pm},k_{s,\pm}\}$ and $k<k_{p,+}$, it can be shown that the integrand $A_{p,1}$ can be expressed as $p(\eta)+q(\eta)(\eta-\eta_0)^{1/2}$, where $p$ and $q$ are analytic in a neighborhood of $\eta_0$. Then the saddle point contribution to the asymptotic expansion of \eqref{3.7} as $r\rightarrow\infty$, calculated by $(A.3)$ in \cite{CP17}, is given by 
\be\label{3.8}
  e^{-i\frac{\pi}{4}}\sqrt{\frac{k_{p,+}}{2\pi}}\frac{e^{ik_{p,+}r}}{\sqrt{r}}\sin\theta_xA_{p,1}(k_{p,+}\cos\theta_x,y_2)e^{-ik_{p,+}y_1\cos\theta_x}+O\left(r^{-\frac{3}{4}}\right). 
\en
In other cases, we can directly apply $(A.2)$ in \cite{CP17} to obtain that the corresponding contribution is the same as \eqref{3.8}, except that the remainder term becomes $O(r^{-3/2})$. 

The last possible contribution, given by $(A.4)$ in \cite{CP17}, arises when the path of steepest descent has to be locally deformed around the branch cuts, as it is shown in Figure 3.2. Suppose that the path $\mathcal{D}$ cross the branch cut starting at the branch point $\eta=k/k_{p,+}$ with $k\in\{k_{p,\pm},k_{s,\pm}\}$ and $k\neq k_{p,+}$. Following the lines in \cite[Section 2.3.4]{CP17}, by patient but straightforword calculation, we see that the jump of $A_{p,1}$ across the branch cut stemming from $\eta=k/k_{p,+}$ is $C(\eta-k/k_{p,+})^{1/2}$ as $\eta\rightarrow k/k_{p,+}$, where $C$ is independent of $\eta$. Therefore, in view of $(A.4)$ in \cite{CP17}, the corresponding contribution would be of $O(r^{-3/2})$ as $r\rightarrow\infty$. 

Combining the above three aspects, we finally obtain that the asymptotic expansion of \eqref{3.7} is given by \eqref{3.8}. Applying the similar analysis to all $U_{a,j}$ and $\nabla_{\mathbf{y}}U_{a,j}$, we can obtain the following results. 
\begin{theorem}\label{thm3.2}
	For $a=p,s$ and $j=1,2$, if $\mathbf{y}=(y_1,y_2)\in\R^2_+\cup\R^2_-$ and $\mathbf{x}=r\hat{\mathbf{x}}=r(\cos\theta_x,\sin\theta_x)\in \R^2_+$ with $\theta_x\in(0,\pi)$, we have the asymptotic behaviors
	\begin{align*}
	  U_{a,j}(\mathbf{x},\mathbf{y})&=\frac{e^{ik_{a,+}r}}{\sqrt{r}}U_{a,j}^\infty(\hat{\mathbf{x}},\mathbf{y})+O(r^{-\frac{3}{4}}), \\
	  \nabla_{\mathbf{y}}U_{a,j}(\mathbf{x},\mathbf{y})&=\frac{e^{ik_{a,+}r}}{\sqrt{r}}\nabla_{\mathbf{y}}U_{a,j}^\infty(\hat{\mathbf{x}},\mathbf{y})+O(r^{-\frac{3}{4}}), 
	\end{align*}
	where $U_{a,j}^\infty$ are defined by 
	\ben
	U_{a,j}^\infty(\hat{\mathbf{x}},\mathbf{y}):=e^{-i\frac{\pi}{4}}\sqrt{\frac{k_{a,+}}{2\pi}}
	\left\{
	\begin{array}{ll}
		\sin\theta_xA_{a,j}(k_{a,+}\cos\theta_x,y_2)e^{-ik_{a,+}y_1\cos\theta_x},\;&\mathbf{x}\in\R^2_+,\;\mathbf{y}\in\R^2_+,\\
		\sin\theta_xB_{a,j}(k_{a,+}\cos\theta_x,y_2)e^{-ik_{a,+}y_1\cos\theta_x},\;&\mathbf{x}\in\R^2_+,\;\mathbf{y}\in\R^2_-. 
	\end{array}
	\right.
	\enn
\end{theorem}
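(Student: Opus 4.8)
\textbf{Proof proposal for Theorem \ref{thm3.2}.}

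The plan is to carry out the steepest-descent analysis that was sketched in detail for $U_{p,1}$ (with $\mathbf{x},\mathbf{y}\in\R^2_+$) for every remaining case: all eight amplitudes $A_{a,j},B_{a,j}$ with $a=p,s$, $j=1,2$, the source either above or below $\Gamma_0$, and finally for the $\mathbf{y}$-derivatives. The starting point in each case is the inverse Fourier representation \eqref{3.6}; after substituting the explicit expression for $\hat U_{a,j}$ from \eqref{3.5} and rescaling $\eta=\xi/k_{a,+}$ (note that the relevant wave number is now $k_{a,+}$, not $k_{p,+}$, so the saddle-point value becomes $\phi(\eta_0)=ik_{a,+}$ and the branch points sit at $\eta=k/k_{a,+}$ for $k\in\{k_{p,\pm},k_{s,\pm}\}$), one obtains an integral of exactly the form \eqref{3.7} with the same phase function $\phi$ up to the replacement $k_{p,+}\to k_{a,+}$. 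Consequently the geometry of the saddle point $\eta_0=\cos\theta_x$, the directions of steepest descent $\alpha_0=3\pi/4,\alpha_1=7\pi/4$, the asymptotes of $\mathcal{D}$, and the intersection points $\cos\theta_x,1/\cos\theta_x$ are inherited verbatim from \cite[Section 2.3.4]{CP17}, so no new contour analysis is needed.

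Next I would re-run the three-contribution argument for each amplitude. First, the pole contribution: the poles of $A_{a,j},B_{a,j}$ are again the zeros $\wid\eta$ of $D(k_{a,+}\eta)$ (since $k_{p,\pm}^{-2}\beta_{p,\pm}+k_{p,-}^{-2}\beta_{p,-}$ and $k_{s,+}^{-2}\beta_{s,+}+k_{s,-}^{-2}\beta_{s,-}$ have no zeros in the complex plane by \cite[Section 2.3.4]{CP17}), and the same monotonicity of $\Rt\phi$ along a path from $\eta_0$ to $\wid\eta$ that stays in the region between $\mathbb{R}$ and $\mathcal{D}$ (following \cite[Section 7.2]{NB84}) shows $\Rt\phi(\wid\eta)<\Rt\phi(\eta_0)=0$, so all residues decay exponentially. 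Second, the saddle-point contribution: one checks that near $\eta_0$ each amplitude has a local representation $p(\eta)+q(\eta)(\eta-\eta_0)^{1/2}$ with $p,q$ analytic (the only possible obstruction is when $\eta_0=\pm k/k_{a,+}$ for some $k<k_{a,+}$, handled by $(A.3)$ in \cite{CP17}, otherwise $(A.2)$ applies and the remainder improves to $O(r^{-3/2})$), giving the leading term $e^{-i\pi/4}\sqrt{k_{a,+}/2\pi}\,r^{-1/2}e^{ik_{a,+}r}\sin\theta_x\,\mathcal{A}(k_{a,+}\cos\theta_x,y_2)e^{-ik_{a,+}y_1\cos\theta_x}$ with $\mathcal{A}=A_{a,j}$ if $\mathbf{y}\in\R^2_+$ and $\mathcal{A}=B_{a,j}$ if $\mathbf{y}\in\R^2_-$, which is exactly the claimed $U_{a,j}^\infty$. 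Third, the branch-cut contribution: the jump of the amplitude across the cut emanating from $\eta=k/k_{a,+}$ ($k\ne k_{a,+}$) is again of the form $C(\eta-k/k_{a,+})^{1/2}$ as $\eta\to k/k_{a,+}$, so by $(A.4)$ in \cite{CP17} it contributes $O(r^{-3/2})$. Adding the three pieces gives the first asymptotic formula. For $\nabla_{\mathbf{y}}U_{a,j}$ the representation \eqref{3.6} may be differentiated under the integral sign in $y_1$ (bringing down a factor $-i\xi$) and in $y_2$ (the amplitudes depend on $y_2$ only through the explicit exponentials $e^{\mp\beta_{a,\pm}y_2}$ and $e^{\mp\beta_{p,\pm}y_2}$, which stay bounded on the contour), and the identical three-contribution argument yields the second formula with $U_{a,j}^\infty$ replaced by $\nabla_{\mathbf{y}}U_{a,j}^\infty$.

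The main obstacle is the purely computational verification, for each of the sixteen-plus cases, that the amplitude genuinely has the required local structure $p(\eta)+q(\eta)(\eta-\eta_0)^{1/2}$ near the saddle and a pure square-root jump across each branch cut: the explicit formulas for $A_{a,j},B_{a,j}$ involve $\beta_{p,\pm}$, $\beta_{s,\pm}$, and the denominator $D$ simultaneously, so one must track how four different square roots $\sqrt{\eta^2-(k_{a,\pm}/k_{b,+})^2}$ interact, confirm that $D(k_{a,+}\eta)$ does not vanish on $\mathbb{R}$ (stated in the excerpt) and has no branch-point-accumulated zeros on $\mathcal{D}$, and verify that factors such as $\xi/\beta_{a,+}$ and $(k_{s,+}^{-2}-k_{s,-}^{-2})\xi^2/(k_{p,+}^{-2}\beta_{p,+}+k_{p,-}^{-2}\beta_{p,-})$ contribute only analytic-plus-square-root singularities. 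These are the "patient but straightforward" calculations already signalled in the text for $U_{p,1}$; the rest of the proof is bookkeeping, since the contour $\mathcal{D}$, the phase $\phi$, and the steepest-descent lemmas $(A.2)$–$(A.4)$ of \cite{CP17} are reused without change.
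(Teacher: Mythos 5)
Your proposal is correct and follows essentially the same route as the paper: the paper carries out the steepest-descent analysis in detail only for $U_{p,1}$ with $\mathbf{x},\mathbf{y}\in\R^2_+$ (deriving \eqref{3.8} via the pole, saddle-point, and branch-cut contributions) and then asserts that the remaining amplitudes and the $\mathbf{y}$-derivatives are handled by the identical argument, which is precisely what you spell out, including the correct rescaling $\eta=\xi/k_{a,+}$ and the resulting leading term matching $U_{a,j}^\infty$.
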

\begin{theorem}\label{thm3.3}
	For $a=p,s$ and $j=1,2$, if $\mathbf{y}=(y_1,y_2)\in\R^2_+\cup\R^2_-$ and $\mathbf{x}=r\hat{\mathbf{x}}=r(\cos\theta_x,\sin\theta_x)\in \R^2_-$ with $\theta_x\in(\pi,2\pi)$, we have the asymptotic behaviors 
	\begin{align*}
	U_{a,j}(\mathbf{x},\mathbf{y})&=\frac{e^{ik_{a,-}r}}{\sqrt{r}}U_{a,j}^\infty(\hat{\mathbf{x}},\mathbf{y})+O(r^{-\frac{3}{4}}), \\
	\nabla_{\mathbf{y}}U_{a,j}(\mathbf{x},\mathbf{y})&=\frac{e^{ik_{a,-}r}}{\sqrt{r}}\nabla_{\mathbf{y}}U_{a,j}^\infty(\hat{\mathbf{x}},\mathbf{y})+O(r^{-\frac{3}{4}}), 
	\end{align*}
	where $U_{a,j}^\infty$ are defined by 
	\ben
	U_{a,j}^\infty(\hat{\mathbf{x}},\mathbf{y}):=e^{i\frac{3\pi}{4}}\sqrt{\frac{k_{a,-}}{2\pi}}
	\left\{
	\begin{array}{ll}
		\sin\theta_xA_{a,j}(k_{a,-}\cos\theta_x,y_2)e^{-ik_{a,-}y_1\cos\theta_x},\;&\mathbf{x}\in\R^2_-,\;\mathbf{y}\in\R^2_+,\\
		\sin\theta_xB_{a,j}(k_{a,-}\cos\theta_x,y_2)e^{-ik_{a,-}y_1\cos\theta_x},\;&\mathbf{x}\in\R^2_-,\;\mathbf{y}\in\R^2_-. 
	\end{array}
	\right.
	\enn
\end{theorem}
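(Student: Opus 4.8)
The plan is to run, for $\mathbf{x}\in\R^2_-$, exactly the steepest-descent argument carried out before Theorem \ref{thm3.2} for $\mathbf{x}\in\R^2_+$. Fix $a\in\{p,s\}$ and $j\in\{1,2\}$. When $x_2=r\sin\theta_x<0$, the representation \eqref{3.5}--\eqref{3.6} reads $U_{a,j}(\mathbf{x},\mathbf{y})=\frac1{2\pi}\int_{-\infty}^{+\infty}A_{a,j}(\xi,y_2)e^{\beta_{a,-}(\xi)x_2}e^{i(x_1-y_1)\xi}\,d\xi$ for $\mathbf{y}\in\R^2_+$, and the same with $B_{a,j}$ in place of $A_{a,j}$ for $\mathbf{y}\in\R^2_-$. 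Substituting $\xi=k_{a,-}\eta$, so that the outgoing wave carries the wave number $k_{a,-}$ as claimed, turns this into
\[
  U_{a,j}(\mathbf{x},\mathbf{y})=\frac{k_{a,-}}{2\pi}\int_{-\infty}^{+\infty}A_{a,j}(k_{a,-}\eta,y_2)\,e^{-ik_{a,-}y_1\eta}\,e^{r\psi(\eta)}\,d\eta,\qquad \psi(\eta)=k_{a,-}\big(\sqrt{\eta^2-1}\,\sin\theta_x+i\eta\cos\theta_x\big),
\]
where $\sqrt{\eta^2-1}=\beta_{a,-}(k_{a,-}\eta)/k_{a,-}$, whose branch cuts are those of $\beta_{a,-}$ after rescaling. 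Since $\theta_x\in(\pi,2\pi)$ forces $\sin\theta_x<0$, the function $\psi$ coincides, after relabelling the constant $k_{a,-}$ and writing $\sin\theta_x=-|\sin\theta_x|$, with the phase $\phi$ used before Theorem \ref{thm3.2} and in \cite[Section 2.3.4]{CP17}. Hence $\psi$ has the single saddle point $\eta_0=\cos\theta_x$ with $\psi(\eta_0)=ik_{a,-}$ and $\psi''(\eta_0)=-ik_{a,-}/\sin^2\theta_x$, the steepest-descent directions at $\eta_0$ are $3\pi/4$ and $7\pi/4$, and the path of steepest descent $\mathcal{D}$ has precisely the shape shown in Figure 3.2, approaching the two asymptotic lines and crossing some of the branch cuts, now issuing from $\pm k/k_{a,-}$ with $k\in\{k_{p,\pm},k_{s,\pm}\}$.

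I would then estimate the three contributions as in the proof of Theorem \ref{thm3.2}. (i) Any zeros of $D(k_{a,-}\eta)$ caught between the real axis and $\mathcal{D}$ produce residues of the form $f(r)e^{r\Rt\psi(\wid\eta)}$ with $f$ a polynomial; joining $\wid\eta$ to $\eta_0$ first along $\mathcal{D}$ and then horizontally, without meeting the branch cut of $\sqrt{\eta^2-1}$, the real part of $\psi$ is non-increasing (cf. \cite[Section 7.2]{NB84}), so $\Rt\psi(\wid\eta)<\Rt\psi(\eta_0)=0$ and this term is exponentially small. (ii) The saddle contribution is read off from $(A.2)$--$(A.3)$ of \cite{CP17}: near $\eta_0$ the integrand is analytic, or of the form $p(\eta)+q(\eta)(\eta-\eta_0)^{1/2}$ when $\eta_0$ coincides with a branch point $k/k_{a,-}$ with $k<k_{a,-}$, and in either case the leading term is
\[
  \frac{e^{ik_{a,-}r}}{\sqrt r}\,e^{i\frac{3\pi}{4}}\sqrt{\frac{k_{a,-}}{2\pi}}\,\sin\theta_x\,A_{a,j}(k_{a,-}\cos\theta_x,y_2)\,e^{-ik_{a,-}y_1\cos\theta_x}+O(r^{-3/4}),
\]
with remainder $O(r^{-3/2})$ in the analytic case; here the prefactor $e^{i3\pi/4}$, as opposed to $e^{-i\pi/4}$ in Theorem \ref{thm3.2}, is simply $e^{-i\pi/4}$ with $|\sin\theta_x|$ rewritten as $-\sin\theta_x$. (iii) Each local detour of $\mathcal{D}$ around a branch cut issuing from $\eta=k/k_{a,-}$ contributes $O(r^{-3/2})$ by $(A.4)$ of \cite{CP17}, since the jump of the integrand across that cut is $O\big((\eta-k/k_{a,-})^{1/2}\big)$ as before. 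Summing (i)--(iii) gives the asserted expansion of $U_{a,j}$; the case $\mathbf{y}\in\R^2_-$ is identical with $B_{a,j}$ replacing $A_{a,j}$.

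For the gradient I would differentiate \eqref{3.6} under the integral sign, which is legitimate because, for $x_2\ne 0$, the integrand of \eqref{3.6} and its $\mathbf{y}$-derivatives decay exponentially in $\xi$ (the $y_1$-derivative only brings down $-ik_{a,-}\eta$, the $y_2$-derivative only differentiates the slowly varying exponentials in $y_2$). Applying the same steepest-descent analysis to each resulting integral, the saddle contribution of $\partial_{y_\ell}U_{a,j}$ is exactly $\partial_{y_\ell}$ of that of $U_{a,j}$, since the $\mathbf{y}$-dependence sits entirely in the slowly varying prefactor; this gives $\nabla_{\mathbf{y}}U_{a,j}^\infty$ with the same error order. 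Together with Theorem \ref{thm3.2}, these asymptotics show that $U_{a,j}$ and $\nabla_{\mathbf y}U_{a,j}$ satisfy the Sommerfeld condition in \eqref{3.4}, so that, invoking the uniqueness Theorem \ref{thm2.4}, the two-layered Green's tensor is well defined in both half-spaces.

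The step that genuinely requires care — and the only real obstacle — is the branch-cut and orientation bookkeeping for $\theta_x\in(\pi,2\pi)$: one must pin down the correct branch of $\sqrt{\eta_0^2-1}$ at the saddle (it equals $-i|\sin\theta_x|=i\sin\theta_x$ here, the opposite sign from the upper half-space), track the induced orientation of $\mathcal{D}$, and, for each admissible ordering of the ratios $k_{p,-}/k_{a,-},\,1,\,k_{s,-}/k_{a,-},\,k_{s,+}/k_{a,-}$, decide which of the branch points $\pm k/k_{a,-}$ the path $\mathcal{D}$ actually encircles, since these configurations are not the ones met in the $\R^2_+$ analysis. Once the geometry of $\mathcal{D}$ relative to the rescaled branch cuts is laid out, however, no new phenomenon appears and the three estimates above go through verbatim.
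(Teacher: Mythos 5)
Your proposal is correct and follows essentially the same route as the paper: the paper proves Theorem \ref{thm3.3} only by remarking that the steepest-descent analysis carried out for $U_{p,1}$ with $\mathbf{x}\in\R^2_+$ "applies similarly," and your argument supplies exactly those details for $\mathbf{x}\in\R^2_-$, with the correct phase $\psi$, saddle point $\eta_0=\cos\theta_x$, the same three contributions (poles of $D$, saddle, branch-cut detours), and the right bookkeeping identifying $e^{i3\pi/4}\sin\theta_x$ with $e^{-i\pi/4}|\sin\theta_x|$. Your closing caveat about tracking which rescaled branch points $\pm k/k_{a,-}$ the deformed contour actually crosses is a genuine point of care that the paper itself glosses over, but it does not change the outcome.
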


From Theorems \ref{thm3.2} and \ref{thm3.3}, clearly all $U_{a,j}$ satisfy the Sommerfeld radiation condition, thus we finally obtain the existence of the Green's tensor. 
\begin{theorem}\label{thm3.4}
	The two-layered Green's tensor $\mathbf{G}(\mathbf{x},\mathbf{y})$ defined by \eqref{3.1} exists and is unique. Moreover, it has the explicit form $\mathbf{G}(\mathbf{x},\mathbf{y})=(\mathbf{G}_1(\mathbf{x},\mathbf{y}),\mathbf{G}_2(\mathbf{x},\mathbf{y}))$ with the column vector $\mathbf{G}_j=-k_p^{-2}\grad_{\mathbf{x}}G_{p,j}-k_s^{-2}\grad_{\mathbf{x}}^\perp G_{s,j}$, $j=1,2$, where $G_{a,j}=\wid G_{a,j}+U_{a,j}$ with $\wid G_{a,j}$ and $U_{a,j}$ given by Theorem {\rm \ref{thm3.1}} and \eqref{3.6}, respectively. 
\end{theorem}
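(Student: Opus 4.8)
The plan is to deduce uniqueness from the result already established and to obtain existence by assembling the explicit ingredients constructed in Theorems~\ref{thm3.1}--\ref{thm3.3}. \emph{Uniqueness} is immediate: \eqref{3.1} is exactly problem \eqref{1.1}--\eqref{1.3} with $a_0=1$ and the incident field in the $j$-th column replaced by the point source $\delta(\cdot,\mathbf{y})\mathbf{e}_j$, so Theorem~\ref{thm2.4}, applied columnwise, forces the solution to be unique if it exists. For \emph{existence}, I would first use the equivalence between \eqref{3.1} and the decoupled scalar transmission problem \eqref{3.2}: by the Helmholtz decomposition together with Corollary~\ref{cor2.3}, read with the jump $[\,\cdot\,]$ across $\Gamma_0$ in place of the trace difference, a solution of \eqref{3.1} is in one-to-one correspondence with a pair $G_{p,j},G_{s,j}$ solving \eqref{3.2}; in particular this makes the construction independent of $\tilde\mu,\tilde\lambda$. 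Hence it is enough to solve \eqref{3.2} and then set $\mathbf{G}_j:=-k_p^{-2}\grad_{\mathbf{x}}G_{p,j}-k_s^{-2}\grad^\perp_{\mathbf{x}}G_{s,j}$, $\mathbf{G}=(\mathbf{G}_1,\mathbf{G}_2)$.

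To solve \eqref{3.2}, split $G_{a,j}=\wid G_{a,j}+U_{a,j}$, where $\wid G_{a,j}$ are the explicit derivatives of the two-layered acoustic Green's function furnished by Theorem~\ref{thm3.1}; these carry the Dirac singularity at $\mathbf{x}=\mathbf{y}$ and dispose of the source equations together with the $\pa_{\boldsymbol{\nu}}$-transmission conditions, leaving the homogeneous transmission problem \eqref{3.4} for $U_{a,j}$. The latter is produced by the partial Fourier transform in $x_1$: the equations in \eqref{3.4} reduce to $\pa_{x_2}^2\hat U_{a,j}+(k_a^2-\xi^2)\hat U_{a,j}=0$, the decay/radiation selection forces the form \eqref{3.5}, continuity at $x_2=0$ identifies the upper and lower coefficients, and the two transformed transmission conditions become a $2\times2$ linear system whose determinant $D$ has no real zero, hence uniquely determines $A_{a,j},B_{a,j}$; inverting the transform yields \eqref{3.6}, which defines $U_{a,j}$ together with its $\mathbf{x}$-derivatives as absolutely convergent integrals (the exponential factors decay in $\xi$ on each of the four regions $\R^2_\pm\times\R^2_\pm$), so that $\mathbf{G}(\cdot,\mathbf{y})$ is a well-defined smooth function on $\R^2\setminus\{\mathbf{y}\}$. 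It then remains to check that $U_{a,j}$ solves \eqref{3.4}: the Helmholtz equations and all transmission conditions hold by construction, while the Sommerfeld radiation condition is precisely Theorems~\ref{thm3.2} and~\ref{thm3.3}, which were obtained by deforming \eqref{3.6} onto the path of steepest descent and collecting the saddle-point, residue and branch-cut contributions. Thus $G_{a,j}=\wid G_{a,j}+U_{a,j}$ solves \eqref{3.2}.

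The step requiring the most care -- and where I would focus the write-up -- is confirming that the reassembled $\mathbf{G}$ meets \emph{every} requirement of \eqref{3.1}. The transmission conditions on $\Gamma_0$ and the Kupradze condition \eqref{1.3} transfer back from \eqref{3.2} by reversing the computations in Lemma~\ref{lem2.1}/Corollary~\ref{cor2.3} and by inheriting the scalar Sommerfeld decay of $G_{p,j},G_{s,j}$ and the known far-field of $\wid G_{a,j}$; away from the diagonal the Navier equation in \eqref{3.1} is immediate from \eqref{3.2} and the identities $\Delta^*\grad\phi=(2\mu+\lambda)\grad\Delta\phi$ and $\Delta^*\grad^\perp\psi=\mu\grad^\perp\Delta\psi$ together with $k_{p,\pm}^2=\rho_\pm\om^2/(2\mu+\lambda)$, $k_{s,\pm}^2=\rho_\pm\om^2/\mu$. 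The genuinely delicate part is the behavior at $\mathbf{x}=\mathbf{y}$: one must track, in the sense of distributions, how the compressional source $-(2\mu+\lambda)^{-1}\dive(\delta\mathbf{e}_j)$ and the shear source $-\mu^{-1}\dive^\perp(\delta\mathbf{e}_j)$ in \eqref{3.2} combine under $-k_p^{-2}\grad_{\mathbf{x}}(\cdot)-k_s^{-2}\grad^\perp_{\mathbf{x}}(\cdot)$, using the two-dimensional identity relating $\grad\dive$, $\grad^\perp\dive^\perp$ and $\Delta$, so as to reproduce the point source on the right-hand side of \eqref{3.1}. This distributional bookkeeping at the source -- not the (already completed) steepest-descent asymptotics -- is the main obstacle; once it is settled, the theorem follows by collecting the pieces, with uniqueness from Theorem~\ref{thm2.4}.
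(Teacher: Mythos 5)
Your proposal follows essentially the same route as the paper: uniqueness from Theorem \ref{thm2.4}, reduction of \eqref{3.1} to the scalar transmission problem \eqref{3.2} via the Helmholtz decomposition and Corollary \ref{cor2.3}, the splitting $G_{a,j}=\wid G_{a,j}+U_{a,j}$ with $\wid G_{a,j}$ from Theorem \ref{thm3.1} and $U_{a,j}$ obtained by the Fourier transform in \eqref{3.5}--\eqref{3.6}, and the steepest-descent verification of the radiation condition in Theorems \ref{thm3.2} and \ref{thm3.3}. The distributional bookkeeping at the source that you flag as the main remaining obstacle is a genuine subtlety, but it is precisely what the paper absorbs into its asserted equivalence of \eqref{3.1} and \eqref{3.2}, so on that point your write-up would, if anything, be more explicit than the paper's.
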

\begin{remark}\label{rem3.5}
	By the explicit expression of $G_{a,j}$, it can be verified that $\mathbf{G}(\mathbf{x},\mathbf{y})=\mathbf{G}(\mathbf{y},\mathbf{x})^\top$ for $\mathbf{x},\mathbf{y}\in\R^2_\pm$. 
\end{remark}

\subsection{Three-dimensional case}\label{sec3.2}
For any source point $\mathbf{y}\in\R^3_+\cup\R^3_-$, the two-layered Green's tensor $\mathbf{G}(\mathbf{x},\mathbf{y})$ in three dimensions is defined by: 
\be\label{3.9}
   \left\{
  \begin{array}{ll}
  	\Delta^*_{\mathbf{x}}\mathbf{G}(\mathbf{x},\mathbf{y})+\rho_\pm\om^2\mathbf{G}(\mathbf{x},\mathbf{y})=-\delta(\mathbf{x},\mathbf{y})\mathbf{I},~~~&{\rm in}~\R^3_\pm,~\mathbf{x}\neq\mathbf{y}\\
  	\left[\mathbf{G}(\mathbf{x},\mathbf{y})\right]=0,~\left[\mathbf{P}_{\tilde{\mu},\tilde{\lambda}}^{(\mathbf{x})}(\mathbf{G}(\mathbf{x},\mathbf{y}))\right]=0~~~&{\rm on}~\Gamma_0, \\
  	\lim\limits_{r\rightarrow\infty}r\left(\pa_r\mathbf{G}_p(\mathbf{x},\mathbf{y})-ik_{p,\pm}\mathbf{G}_p(\mathbf{x},\mathbf{y})\right)=0,~~~&\mathbf{\hat x}\in\Sp^2_\pm, \\
  	\lim\limits_{r\rightarrow\infty}r(\curl\mathbf{G}_s(\mathbf{x},\mathbf{y})\times\hat{\mathbf{x}}-ik_{s,\pm}\mathbf{G}_s(\mathbf{x},\mathbf{y}))=0,~~~&\mathbf{\hat x}\in\Sp^2_\pm, 
  \end{array}
  \right.
\en
where $\delta(\mathbf{x},\mathbf{y})$ denotes the Dirac delta distribution, $\mathbf{I}$ is the $3\times3$ identity matrix, and $\mathbf{G}_p:=\dive_{\mathbf{x}}\mathbf{G}, \mathbf{G}_s:=\curl_{\mathbf{x}}\mathbf{G}$. Still, $\Delta^*_{\mathbf{x}}, \mathbf{P}_{\tilde{\mu},\tilde{\lambda}}^{(\mathbf{x})},\dive_{\mathbf{x}}$ and $\curl_{\mathbf{x}}$ are acting on $\mathbf{G}_j, j=1,2,3$, where $\mathbf{G}=(\mathbf{G}_1,\mathbf{G}_2,\mathbf{G}_3)$ with $\mathbf{G}_j$ being the column vectors. Set $G_{p,j}:=\dive_{\mathbf{x}}\mathbf{G}_j$ and $\mathbf{G}_{s,j}:=\curl_{\mathbf{x}}\mathbf{G}_j=(G_{s,j}^{(1)},G_{s,j}^{(2)},G_{s,j}^{(3)})^\top$ with $j=1,2,3$. 

From Theorem \ref{thm2.4}, $\mathbf{G}$ defined above is unique. Again by the Helmholtz decomposition and Corollary \ref{cor2.3}, we can equivalently reduce \eqref{3.9} to the following: for $j=1,2,3$, 
\be\label{3.10}
\left\{
\begin{array}{ll}
	\displaystyle\Delta_\mathbf{x}G_{p,j}(\mathbf{x},\mathbf{y})+k_p^2G_{p,j}(\mathbf{x},\mathbf{y})=-(2\mu+\lambda)^{-1}\dive_{\mathbf{x}}(\delta(\mathbf{x},\mathbf{y})\mathbf{e}_j),~~~&{\rm in}~\R^3,\\ 
	\displaystyle\curl_\mathbf{x}\curl_{\mathbf{x}}\mathbf{G}_{s,j}(\mathbf{x},\mathbf{y})-k_s^2\mathbf{G}_{s,j}(\mathbf{x},\mathbf{y})=\mu^{-1}\curl_{\mathbf{x}}(\delta(\mathbf{x},\mathbf{y})\mathbf{e}_j),~~~&{\rm in}~\R^3,\\
	\left[G_{p,j}(\mathbf{x},\mathbf{y})\right]=0,~\left[\boldsymbol{\nu}\times \mathbf{G}_{s,j}(\mathbf{x},\mathbf{y})\right]=0,~~~&{\rm on}~\Gamma_0, \\
	\displaystyle\left[k_p^{-2}\pa_{\boldsymbol{\nu}(\mathbf{x})}G_{p,j}(\mathbf{x},\mathbf{y})-k_s^{-2}\boldsymbol{\nu}\cdot\curl_{\mathbf{x}}\mathbf{G}_{s,j}(\mathbf{x},\mathbf{y})\right]=0,~~~&{\rm on}~\Gamma_0, \\
	\displaystyle\left[k_p^{-2}\boldsymbol{\nu}\times\grad_{\mathbf{x}}G_{p,j}(\mathbf{x},\mathbf{y})-k_s^{-2}\boldsymbol{\nu}\times\curl_{\mathbf{x}}\mathbf{G}_{s,j}(\mathbf{x},\mathbf{y})\right]=0,~~~&{\rm on}~\Gamma_0, \\
	\lim\limits_{r\rightarrow\infty}r\left(\pa_rG_{p,j}(\mathbf{x},\mathbf{y})-ik_{p,\pm}G_{p,j}(\mathbf{x},\mathbf{y})\right)=0,~~~&\mathbf{\hat x}\in\Sp^2_\pm, \\
	\lim\limits_{r\rightarrow\infty}r(\curl\mathbf{G}_{s,j}(\mathbf{x},\mathbf{y})\times\hat{\mathbf{x}}-ik_{s,\pm}\mathbf{G}_{s,j}(\mathbf{x},\mathbf{y}))=0,~~~&\mathbf{\hat x}\in\Sp^2_\pm. 
\end{array}
\right.
\en
Note that here $\mathbf{e}_j$ is the $j$-th standard orthonormal basis of $\R^3$ and $k_a:=k_{a,\pm}$ in $\R^3_\pm$. We will derive the explicit expressions for $G_{p,j}$ and $\mathbf{G}_{s,j}$, $j=1,2,3$, for which we also introduce some auxiliary functions: 
\be\label{3.11}
\left\{
\begin{array}{ll}
	\displaystyle\Delta_\mathbf{x}\wid G_{p,j}(\mathbf{x},\mathbf{y})+k_p^2\wid G_{p,j}(\mathbf{x},\mathbf{y})=-(2\mu+\lambda)^{-1}\dive_{\mathbf{x}}(\delta(\mathbf{x},\mathbf{y})\mathbf{e}_j),~~~&{\rm in}~\R^3,\\ 
	\displaystyle\curl_\mathbf{x}\curl_{\mathbf{x}}\mathbf{\wid G}_{s,j}(\mathbf{x},\mathbf{y})-k_s^2 \mathbf{\wid G}_{s,j}(\mathbf{x},\mathbf{y})=\mu^{-1}\curl_{\mathbf{x}}(\delta(\mathbf{x},\mathbf{y})\mathbf{e}_j),~~~&{\rm in}~\R^3,\\
	\left[\wid G_{p,j}(\mathbf{x},\mathbf{y})\right]=0,~\left[\boldsymbol{\nu}\times\mathbf{\wid G}_{s,j}(\mathbf{x},\mathbf{y})\right]=0,~~~&{\rm on}~\Gamma_0, \\
	\displaystyle\left[k_p^{-2}\pa_{\boldsymbol{\nu}(\mathbf{x})}\wid G_{p,j}(\mathbf{x},\mathbf{y})\right]=0,~\displaystyle\left[k_s^{-2}\boldsymbol{\nu}\times\curl_{\mathbf{x}}\mathbf{\wid G}_{s,j}(\mathbf{x},\mathbf{y})\right]=0,~~~&{\rm on}~\Gamma_0, \\ 
	\lim\limits_{r\rightarrow\infty}r\left(\pa_r\wid G_{p,j}(\mathbf{x},\mathbf{y})-ik_{p,\pm}\wid G_{p,j}(\mathbf{x},\mathbf{y})\right)=0,~~~&\mathbf{\hat x}\in\Sp^2_\pm, \\
	\lim\limits_{r\rightarrow\infty}r(\curl\mathbf{\wid G}_{s,j}(\mathbf{x},\mathbf{y})\times\hat{\mathbf{x}}-ik_{s,\pm}\mathbf{\wid G}_{s,j}(\mathbf{x},\mathbf{y}))=0,~~~&\mathbf{\hat x}\in\Sp^2_\pm. 
\end{array}
\right.
\en

Set $\boldsymbol{\zeta}=(\zeta_1,\zeta_2,0)$, $\mathbf{x}'=(x_1,x_2,0)$ and $\mathbf{y}'=(y_1,y_2,0)$. For a moment, replace $\xi^2$ by $|\boldsymbol{\zeta}|^2=\zeta_1^2+\zeta_2^2$ in the definition of $\beta_{a,\pm}$, $a=p,s$. As in the two-dimensional case, utilizing the Fourier transform and its inverse, solving the resulting systems of linear equations, we can obtain the explicit expressions of $\wid G_{p,j}$ and $\mathbf{\wid G}_{s,j}$. 

Still denote the fundamental solution for $\Delta+k_{a,\pm}^2$ in $\R^3$ by $\Phi_{k_{a,\pm}}(\mathbf{x},\mathbf{y}):=\exp(ik_{a,\pm}|\mathbf{x}-\mathbf{y}|)/(4\pi|\mathbf{x}-\mathbf{y}|)$. Then we have that 
\begin{align*}
&\wid G_{p,j}(\mathbf{x},\mathbf{y})=\frac{1}{2\mu+\lambda} \\
&\left\{
\begin{array}{ll}
	\displaystyle\frac{\pa\Phi_{k_{p,+}}(\mathbf{x},\mathbf{y})}{\pa x_j}+\frac{1}{(2\pi)^2}\int_{-\infty}^{+\infty}\int_{-\infty}^{+\infty}A_{p,j}^+\zeta_je^{-\beta_{p,+}x_3}e^{i\boldsymbol{\zeta}\cdot(\mathbf{x}'-\mathbf{y}')}d\zeta_1d\zeta_2,&\mathbf{x}\in\R^3_+,\;\mathbf{y}\in\R^3_+,\\
	\displaystyle\frac{1}{(2\pi)^2}\int_{-\infty}^{+\infty}\int_{-\infty}^{+\infty}A_{p,j}^-\zeta_je^{\beta_{p,-}x_3}e^{i\boldsymbol{\zeta}\cdot(\mathbf{x}'-\mathbf{y}')}d\zeta_1d\zeta_2,&\mathbf{x}\in\R^3_-,\;\mathbf{y}\in\R^3_+, \\
	\displaystyle\frac{1}{(2\pi)^2}\int_{-\infty}^{+\infty}\int_{-\infty}^{+\infty}B_{p,j}^+\zeta_je^{-\beta_{p,+}x_3}e^{i\boldsymbol{\zeta}\cdot(\mathbf{x}'-\mathbf{y}')}d\zeta_1d\zeta_2,&\mathbf{x}\in\R^3_+,\;\mathbf{y}\in\R^3_-, \\
	\displaystyle\frac{\pa\Phi_{k_{p,-}}(\mathbf{x},\mathbf{y})}{\pa x_j}+\frac{1}{(2\pi)^2}\int_{-\infty}^{+\infty}\int_{-\infty}^{+\infty}B_{p,j}^-\zeta_je^{\beta_{p,-}x_3}e^{i\boldsymbol{\zeta}\cdot(\mathbf{x}'-\mathbf{y}')}d\zeta_1d\zeta_2,&\mathbf{x}\in\R^3_-,\;\mathbf{y}\in\R^3_-, 
\end{array}
\right.
\end{align*}
for $j=1,2$, and 
\begin{align*}
&\wid G_{p,3}(\mathbf{x},\mathbf{y})=\frac{1}{2\mu+\lambda} \\
&\left\{
\begin{array}{ll}
	\displaystyle\frac{\pa\Phi_{k_{p,+}}(\mathbf{x},\mathbf{y})}{\pa x_3}+\frac{1}{(2\pi)^2}\int_{-\infty}^{+\infty}\int_{-\infty}^{+\infty}A_{p,3}^+e^{-\beta_{p,+}x_3}e^{i\boldsymbol{\zeta}\cdot(\mathbf{x}'-\mathbf{y}')}d\zeta_1d\zeta_2,&\mathbf{x}\in\R^3_+,\;\mathbf{y}\in\R^3_+,\\
	\displaystyle\frac{1}{(2\pi)^2}\int_{-\infty}^{+\infty}\int_{-\infty}^{+\infty}A_{p,3}^-e^{\beta_{p,-}x_3}e^{i\boldsymbol{\zeta}\cdot(\mathbf{x}'-\mathbf{y}')}d\zeta_1d\zeta_2,&\mathbf{x}\in\R^3_-,\;\mathbf{y}\in\R^3_+, \\
	\displaystyle\frac{1}{(2\pi)^2}\int_{-\infty}^{+\infty}\int_{-\infty}^{+\infty}B_{p,3}^+e^{-\beta_{p,+}x_3}e^{i\boldsymbol{\zeta}\cdot(\mathbf{x}'-\mathbf{y}')}d\zeta_1d\zeta_2,&\mathbf{x}\in\R^3_+,\;\mathbf{y}\in\R^3_-, \\
	\displaystyle\frac{\pa\Phi_{k_{p,-}}(\mathbf{x},\mathbf{y})}{\pa x_j}+\frac{1}{(2\pi)^2}\int_{-\infty}^{+\infty}\int_{-\infty}^{+\infty}B_{p,3}^-e^{\beta_{p,-}x_3}e^{i\boldsymbol{\zeta}\cdot(\mathbf{x}'-\mathbf{y}')}d\zeta_1d\zeta_2,&\mathbf{x}\in\R^3_-,\;\mathbf{y}\in\R^3_-, 
\end{array}
\right.
\end{align*}
where 
\begin{align*}
&A_{p,1}^+=A_{p,2}^+=\frac{i}{2\beta_{p,+}}R_p(\beta_{p,+},\beta_{p,-})e^{-\beta_{p,+}y_3}, \\ &A_{p,1}^-=A_{p,2}^-=\frac{i}{2\beta_{p,+}}T_p(\beta_{p,+},\beta_{p,-})e^{-\beta_{p,+}y_3}, \\
&B_{p,1}^+=B_{p,2}^+=-\frac{i}{2\beta_{p,-}}(T_p(\beta_{p,+},\beta_{p,-})-2)e^{\beta_{p,-}y_3}, \\
&B_{p,1}^-=B_{p,2}^-=-\frac{i}{2\beta_{p,-}}R_p(\beta_{p,+},\beta_{p,-})e^{\beta_{p,-}y_3}, 
\end{align*}
and 
\begin{align*}
&A_{p,3}^+=\frac{1}{2}R_p(\beta_{p,+},\beta_{p,-})e^{-\beta_{p,+}y_3}, \\
&A_{p,3}^-=\frac{1}{2}T_p(\beta_{p,+},\beta_{p,-})e^{\beta_{p,+}y_3}, \\
&B_{p,3}^+=\frac{1}{2}(T_p(\beta_{p,+},\beta_{p,-}-2))e^{\beta_{p,-}y_3}, \\
&B_{p,3}^-=\frac{1}{2}R_p(\beta_{p,+},\beta_{p,-})e^{\beta_{p,-}y_3}. 
\end{align*}

For $\mathbf{\wid G}_{s,j}$, we note that the corresponding Maxwell equation in \eqref{3.11} is equivalent to 
$$
  \Delta_{\mathbf{x}}\mathbf{\wid G}_{s,j}(\mathbf{x},\mathbf{y})+k_s^2\mathbf{\wid G}_{s,j}(\mathbf{x},\mathbf{y})=-\mu^{-1}\curl_{\mathbf{x}}(\delta(\mathbf{x},\mathbf{y})\mathbf{e}_j)
  ~\text{and}~\dive_{\mathbf{x}}\mathbf{\wid G}_{s,j}(\mathbf{x},\mathbf{y})=0. 
$$
Therefore, we derive that 
\begin{align*}
&\wid G_{s,1}^{(1)}(\mathbf{x},\mathbf{y})=\frac{1}{\mu} \\
&\left\{
\begin{array}{ll}
	\displaystyle\frac{1}{(2\pi)^2}\int_{-\infty}^{+\infty}\int_{-\infty}^{+\infty}A_{s,1}^{(1)}e^{-\beta_{s,+}x_3}\zeta_1\zeta_2e^{i\boldsymbol{\zeta}\cdot(\mathbf{x}'-\mathbf{y}')}d\zeta_1d\zeta_2,&\mathbf{x}\in\R^3_+,\;\mathbf{y}\in\R^3_+,\\
	\displaystyle\frac{1}{(2\pi)^2}\int_{-\infty}^{+\infty}\int_{-\infty}^{+\infty}A_{s,1}^{(1)}e^{\beta_{s,-}x_3}\zeta_1\zeta_2e^{i\boldsymbol{\zeta}\cdot(\mathbf{x}'-\mathbf{y}')}d\zeta_1d\zeta_2,&\mathbf{x}\in\R^3_-,\;\mathbf{y}\in\R^3_+, \\
	\displaystyle\frac{1}{(2\pi)^2}\int_{-\infty}^{+\infty}\int_{-\infty}^{+\infty}B_{s,1}^{(1)}e^{-\beta_{s,+}x_3}\zeta_1\zeta_2e^{i\boldsymbol{\zeta}\cdot(\mathbf{x}'-\mathbf{y}')}d\zeta_1d\zeta_2,&\mathbf{x}\in\R^3_+,\;\mathbf{y}\in\R^3_-, \\
	\displaystyle\frac{1}{(2\pi)^2}\int_{-\infty}^{+\infty}\int_{-\infty}^{+\infty}B_{s,1}^{(1)}e^{\beta_{s,-}x_3}\zeta_1\zeta_2e^{i\boldsymbol{\zeta}\cdot(\mathbf{x}'-\mathbf{y}')}d\zeta_1d\zeta_2,&\mathbf{x}\in\R^3_-,\;\mathbf{y}\in\R^3_-, 
\end{array}
\right.
\end{align*}
and 
\begin{align*}
&\wid G_{s,1}^{(2)}(\mathbf{x},\mathbf{y})=\frac{1}{\mu} \\
&\left\{
\begin{array}{ll}
	\displaystyle\frac{\pa\Phi_{k_{p,+}}(\mathbf{x},\mathbf{y})}{\pa x_3}+\frac{1}{(2\pi)^2}\int_{-\infty}^{+\infty}\int_{-\infty}^{+\infty}(A_{s,1}^{(2)+}+\hat A_{s,1}^{(2)}\zeta_1^2)e^{-\beta_{s,+}x_3}e^{i\boldsymbol{\zeta}\cdot(\mathbf{x}'-\mathbf{y}')}d\zeta_1d\zeta_2,&\mathbf{x}\in\R^3_+,\;\mathbf{y}\in\R^3_+,\\
	\displaystyle\frac{1}{(2\pi)^2}\int_{-\infty}^{+\infty}\int_{-\infty}^{+\infty}(A_{s,1}^{(2)-}+\hat A_{s,1}^{(2)}\zeta_1^2)e^{\beta_{s,-}x_3}e^{i\boldsymbol{\zeta}\cdot(\mathbf{x}'-\mathbf{y}')}d\zeta_1d\zeta_2,&\mathbf{x}\in\R^3_-,\;\mathbf{y}\in\R^3_+, \\
	\displaystyle\frac{1}{(2\pi)^2}\int_{-\infty}^{+\infty}\int_{-\infty}^{+\infty}(B_{s,1}^{(2)+}+\hat B_{s,1}^{(2)}\zeta_1^2)e^{-\beta_{s,+}x_3}e^{i\boldsymbol{\zeta}\cdot(\mathbf{x}'-\mathbf{y}')}d\zeta_1d\zeta_2,&\mathbf{x}\in\R^3_+,\;\mathbf{y}\in\R^3_-, \\
	\displaystyle\frac{\pa\Phi_{k_{p,-}}(\mathbf{x},\mathbf{y})}{\pa x_j}+\frac{1}{(2\pi)^2}\int_{-\infty}^{+\infty}\int_{-\infty}^{+\infty}(B_{s,1}^{(2)-}+\hat B_{s,1}^{(2)}\zeta_1^2)e^{\beta_{s,-}x_3}e^{i\boldsymbol{\zeta}\cdot(\mathbf{x}'-\mathbf{y}')}d\zeta_1d\zeta_2,&\mathbf{x}\in\R^3_-,\;\mathbf{y}\in\R^3_-, 
\end{array}
\right.
\end{align*}
and 
\begin{align*}
&\wid G_{s,1}^{(3)}(\mathbf{x},\mathbf{y})=-\frac{1}{\mu} \\
&\left\{
\begin{array}{ll}
	\displaystyle\frac{\pa\Phi_{k_{p,+}}(\mathbf{x},\mathbf{y})}{\pa x_2}+\frac{1}{(2\pi)^2}\int_{-\infty}^{+\infty}\int_{-\infty}^{+\infty}A_{s,1}^{(3)+}e^{-\beta_{s,+}x_3}\zeta_2e^{i\boldsymbol{\zeta}\cdot(\mathbf{x}'-\mathbf{y}')}d\zeta_1d\zeta_2,&\mathbf{x}\in\R^3_+,\;\mathbf{y}\in\R^3_+,\\
	\displaystyle\frac{1}{(2\pi)^2}\int_{-\infty}^{+\infty}\int_{-\infty}^{+\infty}A_{s,1}^{(3)-}e^{\beta_{s,-}x_3}\zeta_2e^{i\boldsymbol{\zeta}\cdot(\mathbf{x}'-\mathbf{y}')}d\zeta_1d\zeta_2,&\mathbf{x}\in\R^3_-,\;\mathbf{y}\in\R^3_+, \\
	\displaystyle\frac{1}{(2\pi)^2}\int_{-\infty}^{+\infty}\int_{-\infty}^{+\infty}B_{s,1}^{(3)+}e^{-\beta_{s,+}x_3}\zeta_2e^{i\boldsymbol{\zeta}\cdot(\mathbf{x}'-\mathbf{y}')}d\zeta_1d\zeta_2,&\mathbf{x}\in\R^3_+,\;\mathbf{y}\in\R^3_-, \\
	\displaystyle\frac{\pa\Phi_{k_{p,-}}(\mathbf{x},\mathbf{y})}{\pa x_2}+\frac{1}{(2\pi)^2}\int_{-\infty}^{+\infty}\int_{-\infty}^{+\infty}B_{s,1}^{(3)-}e^{\beta_{s,-}x_3}\zeta_2e^{i\boldsymbol{\zeta}\cdot(\mathbf{x}'-\mathbf{y}')}d\zeta_1d\zeta_2,&\mathbf{x}\in\R^3_-,\;\mathbf{y}\in\R^3_-, 
\end{array}
\right.
\end{align*}
where 
\begin{align*}
  &A_{s,1}^{(1)}=-\hat A_{s,1}^{(2)}=\frac{1}{k_{s,+}^{-2}\beta_{s,+}+k_{s,-}^{-2}\beta_{s,-}}\frac{k_{s,-}^{-2}-k_{s,+}^{-2}}{\beta_{s,+}+\beta_{s,-}}e^{-\beta_{s,+}y_3}, \\
  &B_{s,1}^{(1)}=-\hat B_{s,1}^{(2)}=\frac{1}{k_{s,+}^{-2}\beta_{s,+}+k_{s,-}^{-2}\beta_{s,-}}\frac{k_{s,-}^{-2}-k_{s,+}^{-2}}{\beta_{s,+}+\beta_{s,-}}e^{\beta_{s,-}y_3}, 
\end{align*}
and 
\begin{align*}
  &A_{s,1}^{(2)+}=i\beta_{s,+}A_{s,1}^{(3)+}=\left(\frac{1}{2}-\frac{\beta_{s,+}}{\beta_{s,+}+\beta_{s,-}}\right)e^{-\beta_{s,+}y_3}, \\
  &A_{s,1}^{(2)-}=-i\beta_{s,-}A_{s,1}^{(3)-}=\frac{\beta_{s,-}}{\beta_{s,+}+\beta_{s,-}}e^{-\beta_{s,+}y_3}, \\
  &B_{s,1}^{(2)+}=i\beta_{s,+}B_{s,1}^{(3)+}=-\frac{\beta_{s,+}}{\beta_{s,+}+\beta_{s,-}}e^{\beta_{s,-}y_3}, \\
  &B_{s,1}^{(2)-}=-i\beta_{s,-}B_{s,1}^{(3)-}=-\left(\frac{1}{2}-\frac{\beta_{s,-}}{\beta_{s,+}+\beta_{s,-}}\right)e^{\beta_{s,-}y_3}. 
\end{align*}

Further, it is deduced that 
\begin{align*}
&\wid G_{s,2}^{(1)}(\mathbf{x},\mathbf{y})=-\frac{1}{\mu} \\
&\left\{
\begin{array}{ll}
	\displaystyle\frac{\pa\Phi_{k_{p,+}}(\mathbf{x},\mathbf{y})}{\pa x_3}+\frac{1}{(2\pi)^2}\int_{-\infty}^{+\infty}\int_{-\infty}^{+\infty}(A_{s,2}^{(1)+}+\hat A_{s,2}^{(1)}\zeta_2^2)e^{-\beta_{s,+}x_3}e^{i\boldsymbol{\zeta}\cdot(\mathbf{x}'-\mathbf{y}')}d\zeta_1d\zeta_2,&\mathbf{x}\in\R^3_+,\;\mathbf{y}\in\R^3_+,\\
	\displaystyle\frac{1}{(2\pi)^2}\int_{-\infty}^{+\infty}\int_{-\infty}^{+\infty}(A_{s,2}^{(1)-}+\hat A_{s,2}^{(1)}\zeta_2^2)e^{\beta_{s,-}x_3}e^{i\boldsymbol{\zeta}\cdot(\mathbf{x}'-\mathbf{y}')}d\zeta_1d\zeta_2,&\mathbf{x}\in\R^3_-,\;\mathbf{y}\in\R^3_+, \\
	\displaystyle\frac{1}{(2\pi)^2}\int_{-\infty}^{+\infty}\int_{-\infty}^{+\infty}(B_{s,2}^{(1)+}+\hat B_{s,2}^{(1)}\zeta_2^2)e^{-\beta_{s,+}x_3}e^{i\boldsymbol{\zeta}\cdot(\mathbf{x}'-\mathbf{y}')}d\zeta_1d\zeta_2,&\mathbf{x}\in\R^3_+,\;\mathbf{y}\in\R^3_-, \\
	\displaystyle\frac{\pa\Phi_{k_{p,-}}(\mathbf{x},\mathbf{y})}{\pa x_j}+\frac{1}{(2\pi)^2}\int_{-\infty}^{+\infty}\int_{-\infty}^{+\infty}(B_{s,2}^{(1)-}+\hat B_{s,2}^{(1)}\zeta_2^2)e^{\beta_{s,-}x_3}e^{i\boldsymbol{\zeta}\cdot(\mathbf{x}'-\mathbf{y}')}d\zeta_1d\zeta_2,&\mathbf{x}\in\R^3_-,\;\mathbf{y}\in\R^3_-, 
\end{array}
\right.
\end{align*}
and 
\begin{align*}
&\wid G_{s,2}^{(2)}(\mathbf{x},\mathbf{y})=\frac{1}{\mu} \\
&\left\{
\begin{array}{ll}
	\displaystyle\frac{1}{(2\pi)^2}\int_{-\infty}^{+\infty}\int_{-\infty}^{+\infty}A_{s,2}^{(2)}e^{-\beta_{s,+}x_3}\zeta_1\zeta_2e^{i\boldsymbol{\zeta}\cdot(\mathbf{x}'-\mathbf{y}')}d\zeta_1d\zeta_2,&\mathbf{x}\in\R^3_+,\;\mathbf{y}\in\R^3_+,\\
	\displaystyle\frac{1}{(2\pi)^2}\int_{-\infty}^{+\infty}\int_{-\infty}^{+\infty}A_{s,2}^{(2)}e^{\beta_{s,-}x_3}\zeta_1\zeta_2e^{i\boldsymbol{\zeta}\cdot(\mathbf{x}'-\mathbf{y}')}d\zeta_1d\zeta_2,&\mathbf{x}\in\R^3_-,\;\mathbf{y}\in\R^3_+, \\
	\displaystyle\frac{1}{(2\pi)^2}\int_{-\infty}^{+\infty}\int_{-\infty}^{+\infty}B_{s,2}^{(2)}e^{-\beta_{s,+}x_3}\zeta_1\zeta_2e^{i\boldsymbol{\zeta}\cdot(\mathbf{x}'-\mathbf{y}')}d\zeta_1d\zeta_2,&\mathbf{x}\in\R^3_+,\;\mathbf{y}\in\R^3_-, \\
	\displaystyle\frac{1}{(2\pi)^2}\int_{-\infty}^{+\infty}\int_{-\infty}^{+\infty}B_{s,2}^{(2)}e^{\beta_{s,-}x_3}\zeta_1\zeta_2e^{i\boldsymbol{\zeta}\cdot(\mathbf{x}'-\mathbf{y}')}d\zeta_1d\zeta_2,&\mathbf{x}\in\R^3_-,\;\mathbf{y}\in\R^3_-, 
\end{array}
\right.
\end{align*}
and 
\begin{align*}
&\wid G_{s,2}^{(3)}(\mathbf{x},\mathbf{y})=\frac{1}{\mu} \\
&\left\{
\begin{array}{ll}
	\displaystyle\frac{\pa\Phi_{k_{p,+}}(\mathbf{x},\mathbf{y})}{\pa x_1}+\frac{1}{(2\pi)^2}\int_{-\infty}^{+\infty}\int_{-\infty}^{+\infty}A_{s,2}^{(3)+}e^{-\beta_{s,+}x_3}\zeta_1e^{i\boldsymbol{\zeta}\cdot(\mathbf{x}'-\mathbf{y}')}d\zeta_1d\zeta_2,&\mathbf{x}\in\R^3_+,\;\mathbf{y}\in\R^3_+,\\
	\displaystyle\frac{1}{(2\pi)^2}\int_{-\infty}^{+\infty}\int_{-\infty}^{+\infty}A_{s,2}^{(3)-}e^{\beta_{s,-}x_3}\zeta_1e^{i\boldsymbol{\zeta}\cdot(\mathbf{x}'-\mathbf{y}')}d\zeta_1d\zeta_2,&\mathbf{x}\in\R^3_-,\;\mathbf{y}\in\R^3_+, \\
	\displaystyle\frac{1}{(2\pi)^2}\int_{-\infty}^{+\infty}\int_{-\infty}^{+\infty}B_{s,2}^{(3)+}e^{-\beta_{s,+}x_3}\zeta_1e^{i\boldsymbol{\zeta}\cdot(\mathbf{x}'-\mathbf{y}')}d\zeta_1d\zeta_2,&\mathbf{x}\in\R^3_+,\;\mathbf{y}\in\R^3_-, \\
	\displaystyle\frac{\pa\Phi_{k_{p,-}}(\mathbf{x},\mathbf{y})}{\pa x_1}+\frac{1}{(2\pi)^2}\int_{-\infty}^{+\infty}\int_{-\infty}^{+\infty}B_{s,2}^{(3)-}e^{\beta_{s,-}x_3}\zeta_1e^{i\boldsymbol{\zeta}\cdot(\mathbf{x}'-\mathbf{y}')}d\zeta_1d\zeta_2,&\mathbf{x}\in\R^3_-,\;\mathbf{y}\in\R^3_-, 
\end{array}
\right.
\end{align*}
where 
\begin{align*}
  &A_{s,2}^{(1)+}=i\beta_{s,+}A_{s,2}^{(3)+}=A_{s,1}^{(2)+}, \\
  &\hat A_{s,2}^{(1)}=A_{s,2}^{(2)}=\hat A_{s,1}^{(2)}, \\
  &A_{s,2}^{(1)-}=-i\beta_{s,-}A_{s,2}^{(3)-}=A_{s,1}^{(2)-}, 
\end{align*}
and 
\begin{align*}
  &B_{s,2}^{(1)+}=i\beta_{s,+}B_{s,2}^{(3)+}=B_{s,1}^{(2)+}, \\
  &\hat B_{s,2}^{(1)}=B_{s,2}^{(2)}=\hat B_{s,1}^{(2)}, \\
  &B_{s,2}^{(1)-}=-i\beta_{s,-}B_{s,2}^{(3)-}=B_{s,1}^{(2)-}, 
\end{align*}

Moreover, we obtain the following 
\begin{align*}
&\wid G_{s,3}^{(1)}(\mathbf{x},\mathbf{y})=\frac{1}{\mu} \\
&\left\{
\begin{array}{ll}
	\displaystyle\frac{\pa\Phi_{k_{s,+}}(\mathbf{x},\mathbf{y})}{\pa x_2}+\frac{1}{(2\pi)^2}\int_{-\infty}^{+\infty}\int_{-\infty}^{+\infty}A_{s,3}^{(1)+}e^{-\beta_{s,+}x_3}\zeta_2e^{i\boldsymbol{\zeta}\cdot(\mathbf{x}'-\mathbf{y}')}d\zeta_1d\zeta_2,&\mathbf{x}\in\R^3_+,\;\mathbf{y}\in\R^3_+,\\
	\displaystyle\frac{1}{(2\pi)^2}\int_{-\infty}^{+\infty}\int_{-\infty}^{+\infty}A_{s,3}^{(1)-}e^{\beta_{s,-}x_3}\zeta_2e^{i\boldsymbol{\zeta}\cdot(\mathbf{x}'-\mathbf{y}')}d\zeta_1d\zeta_2,&\mathbf{x}\in\R^3_-,\;\mathbf{y}\in\R^3_+, \\
	\displaystyle\frac{1}{(2\pi)^2}\int_{-\infty}^{+\infty}\int_{-\infty}^{+\infty}B_{s,3}^{(1)+}e^{-\beta_{s,+}x_3}\zeta_2e^{i\boldsymbol{\zeta}\cdot(\mathbf{x}'-\mathbf{y}')}d\zeta_1d\zeta_2,&\mathbf{x}\in\R^3_+,\;\mathbf{y}\in\R^3_-, \\
	\displaystyle\frac{\pa\Phi_{k_{s,-}}(\mathbf{x},\mathbf{y})}{\pa x_2}+\frac{1}{(2\pi)^2}\int_{-\infty}^{+\infty}\int_{-\infty}^{+\infty}B_{s,3}^{(1)-}e^{\beta_{s,-}x_3}\zeta_2e^{i\boldsymbol{\zeta}\cdot(\mathbf{x}'-\mathbf{y}')}d\zeta_1d\zeta_2,&\mathbf{x}\in\R^3_-,\;\mathbf{y}\in\R^3_-, 
\end{array}
\right.
\end{align*}
and 
\begin{align*}
&\wid G_{s,3}^{(2)}(\mathbf{x},\mathbf{y})=-\frac{1}{\mu} \\
&\left\{
\begin{array}{ll}
	\displaystyle\frac{\pa\Phi_{k_{s,+}}(\mathbf{x},\mathbf{y})}{\pa x_1}+\frac{1}{(2\pi)^2}\int_{-\infty}^{+\infty}\int_{-\infty}^{+\infty}A_{s,3}^{(2)+}e^{-\beta_{s,+}x_3}\zeta_1e^{i\boldsymbol{\zeta}\cdot(\mathbf{x}'-\mathbf{y}')}d\zeta_1d\zeta_2,&\mathbf{x}\in\R^3_+,\;\mathbf{y}\in\R^3_+,\\
	\displaystyle\frac{1}{(2\pi)^2}\int_{-\infty}^{+\infty}\int_{-\infty}^{+\infty}A_{s,3}^{(2)-}e^{\beta_{s,-}x_3}\zeta_1e^{i\boldsymbol{\zeta}\cdot(\mathbf{x}'-\mathbf{y}')}d\zeta_1d\zeta_2,&\mathbf{x}\in\R^3_-,\;\mathbf{y}\in\R^3_+, \\
	\displaystyle\frac{1}{(2\pi)^2}\int_{-\infty}^{+\infty}\int_{-\infty}^{+\infty}B_{s,3}^{(2)+}e^{-\beta_{s,+}x_3}\zeta_1e^{i\boldsymbol{\zeta}\cdot(\mathbf{x}'-\mathbf{y}')}d\zeta_1d\zeta_2,&\mathbf{x}\in\R^3_+,\;\mathbf{y}\in\R^3_-, \\
	\displaystyle\frac{\pa\Phi_{k_{s,-}}(\mathbf{x},\mathbf{y})}{\pa x_1}+\frac{1}{(2\pi)^2}\int_{-\infty}^{+\infty}\int_{-\infty}^{+\infty}B_{s,3}^{(2)-}e^{\beta_{s,-}x_3}\zeta_1e^{i\boldsymbol{\zeta}\cdot(\mathbf{x}'-\mathbf{y}')}d\zeta_1d\zeta_2,&\mathbf{x}\in\R^3_-,\;\mathbf{y}\in\R^3_-, 
\end{array}
\right.
\end{align*}
and $\wid G_{s,3}^{(3)}(\mathbf{x},\mathbf{y})=0$, where 
\begin{align*}
&A_{s,3}^{(1)+}=A_{s,3}^{(2)+}=\frac{i}{2\beta_{s,+}}R_s(\beta_{s,+},\beta_{s,-})e^{-\beta_{s,+}y_3}, \\ &A_{s,3}^{(1)-}=A_{s,3}^{(2)-}=\frac{i}{2\beta_{s,+}}T_s(\beta_{s,+},\beta_{s,-})e^{-\beta_{s,+}y_3}, \\
&B_{s,3}^{(1)+}=B_{s,3}^{(2)+}=-\frac{i}{2\beta_{s,-}}(T_s(\beta_{s,+},\beta_{s,-})-2)e^{\beta_{s,-}y_3}, \\
&B_{s,3}^{(1)-}=B_{s,3}^{(2)-}=-\frac{i}{2\beta_{s,-}}R_s(\beta_{s,+},\beta_{s,-})e^{\beta_{s,-}y_3}. 
\end{align*}

For $U_{p,j}:=G_{p,j}-\wid G_{p,j}$ and $\mathbf{U}_{s,j}:=\mathbf{G}_{s,j}-\mathbf{\wid G}_{s,j}$ with $j=1,2,3$, it is easily verified that 
\ben
\left\{
\begin{array}{ll}
	\displaystyle\Delta_\mathbf{x}U_{p,j}(\mathbf{x},\mathbf{y})+k_p^2U_{p,j}(\mathbf{x},\mathbf{y})=0,&{\rm in}\;\R^3,~ \\ 
	\displaystyle\curl_\mathbf{x}\curl_{\mathbf{x}}\mathbf{U}_{s,j}(\mathbf{x},\mathbf{y})-k_s^2\mathbf{U}_{s,j}(\mathbf{x},\mathbf{y})=0,&{\rm in}\;\R^3,~ \\
	\left[U_{p,j}(\mathbf{x},\mathbf{y})\right]=0,~\left[\boldsymbol{\nu}\times \mathbf{U}_{s,j}(\mathbf{x},\mathbf{y})\right]=0,&{\rm on}\;\Gamma_0,~ \\
	\displaystyle\left[k_p^{-2}\pa_{\boldsymbol{\nu}(\mathbf{x})}U_{p,j}(\mathbf{x},\mathbf{y})-k_s^{-2}\boldsymbol{\nu}\cdot\curl_{\mathbf{x}}\mathbf{U}_{s,j}(\mathbf{x},\mathbf{y})\right]=f_j,&{\rm on}\;\Gamma_0,~ \\
	\displaystyle\left[-k_p^{-2}\boldsymbol{\nu}\times\grad_{\mathbf{x}}U_{p,j}(\mathbf{x},\mathbf{y})+k_s^{-2}\boldsymbol{\nu}\times\curl_{\mathbf{x}}\mathbf{U}_{s,j}(\mathbf{x},\mathbf{y})\right]=\mathbf{F}_j,&{\rm on}\;\Gamma_0,~ \\
	\lim\limits_{r\rightarrow\infty}r\left(\pa_rU_{p,j}(\mathbf{x},\mathbf{y})-ik_{p,\pm}U_{p,j}(\mathbf{x},\mathbf{y})\right)=0,~~~&\mathbf{\hat x}\in\Sp^2_\pm, \\
	\lim\limits_{r\rightarrow\infty}r(\curl\mathbf{U}_{s,j}(\mathbf{x},\mathbf{y})\times\hat{\mathbf{x}}-ik_{s,\pm}\mathbf{U}_{s,j}(\mathbf{x},\mathbf{y}))=0,~~~&\mathbf{\hat x}\in\Sp^2_\pm, 
\end{array}
\right.
\enn
where $f_j=[k_{s}^{-2}\boldsymbol{\nu}\cdot\curl_{\mathbf{x}}\mathbf{\wid G}_{s,j}(\mathbf{x},\mathbf{y})]$ and $\mathbf{F}_j=[k_{p}^{-2}\boldsymbol{\nu}\times\grad_{\mathbf{x}}\wid G_{p,j}(\mathbf{x},\mathbf{y})]$. Again, applying the Fourier transform and using the forms of $\wid G_{p,j}$ and $\mathbf{\wid G}_{s,j}$, by long but straightforward calculations, we can get the expressions of $U_{p,j}$ and $\mathbf{U}_{s,j}$. 

In particular, we have for $j=1,2$, 
\begin{align*}
&U_{p,j}(\mathbf{x},\mathbf{y})=\\
&\left\{
\begin{array}{ll}
	\displaystyle\frac{1}{(2\pi)^2}\int_{-\infty}^{+\infty}\int_{-\infty}^{+\infty}R_{p,j}e^{-\beta_{p,+}x_3}\zeta_je^{i\boldsymbol{\zeta}\cdot(\mathbf{x}'-\mathbf{y}')}d\zeta_1d\zeta_2,&\mathbf{x}\in\R^3_+,\;\mathbf{y}\in\R^3_+,\\
	\displaystyle\frac{1}{(2\pi)^2}\int_{-\infty}^{+\infty}\int_{-\infty}^{+\infty}R_{p,j}e^{\beta_{p,-}x_3}\zeta_je^{i\boldsymbol{\zeta}\cdot(\mathbf{x}'-\mathbf{y}')}d\zeta_1d\zeta_2,&\mathbf{x}\in\R^3_-,\;\mathbf{y}\in\R^3_+, \\
	\displaystyle\frac{1}{(2\pi)^2}\int_{-\infty}^{+\infty}\int_{-\infty}^{+\infty}T_{p,j}e^{-\beta_{p,+}x_3}\zeta_je^{i\boldsymbol{\zeta}\cdot(\mathbf{x}'-\mathbf{y}')}d\zeta_1d\zeta_2,&\mathbf{x}\in\R^3_+,\;\mathbf{y}\in\R^3_-, \\
	\displaystyle\frac{1}{(2\pi)^2}\int_{-\infty}^{+\infty}\int_{-\infty}^{+\infty}T_{p,j}e^{\beta_{p,-}x_3}\zeta_je^{i\boldsymbol{\zeta}\cdot(\mathbf{x}'-\mathbf{y}')}d\zeta_1d\zeta_2,&\mathbf{x}\in\R^3_-,\;\mathbf{y}\in\R^3_-, 
\end{array}
\right.
\end{align*}
and 
\begin{align*}
&U_{p,3}(\mathbf{x},\mathbf{y})=\\
&\left\{
\begin{array}{ll}
	\displaystyle\frac{1}{(2\pi)^2}\int_{-\infty}^{+\infty}\int_{-\infty}^{+\infty}R_{p,3}e^{-\beta_{p,+}x_3}e^{i\boldsymbol{\zeta}\cdot(\mathbf{x}'-\mathbf{y}')}d\zeta_1d\zeta_2,&\mathbf{x}\in\R^3_+,\;\mathbf{y}\in\R^3_+,\\
	\displaystyle\frac{1}{(2\pi)^2}\int_{-\infty}^{+\infty}\int_{-\infty}^{+\infty}R_{p,3}e^{\beta_{p,-}x_3}e^{i\boldsymbol{\zeta}\cdot(\mathbf{x}'-\mathbf{y}')}d\zeta_1d\zeta_2,&\mathbf{x}\in\R^3_-,\;\mathbf{y}\in\R^3_+, \\
	\displaystyle\frac{1}{(2\pi)^2}\int_{-\infty}^{+\infty}\int_{-\infty}^{+\infty}T_{p,3}e^{-\beta_{p,+}x_3}e^{i\boldsymbol{\zeta}\cdot(\mathbf{x}'-\mathbf{y}')}d\zeta_1d\zeta_2,&\mathbf{x}\in\R^3_+,\;\mathbf{y}\in\R^3_-, \\
	\displaystyle\frac{1}{(2\pi)^2}\int_{-\infty}^{+\infty}\int_{-\infty}^{+\infty}T_{p,3}e^{\beta_{p,-}x_3}e^{i\boldsymbol{\zeta}\cdot(\mathbf{x}'-\mathbf{y}')}d\zeta_1d\zeta_2,&\mathbf{x}\in\R^3_-,\;\mathbf{y}\in\R^3_-, 
\end{array}
\right.
\end{align*}
where 
\begin{align*}
  &R_{p,1}=R_{p,2}=\frac{C_0}{D}i\left[\frac{(k_{s,+}^{-2}\beta_{s,+}+k_{s,-}^{-2}\beta_{s,-})\beta_{s,-}+(k_{s,+}^{-2}-k_{s,-}^{-2})|\boldsymbol{\zeta}|^2}{\beta_{s,+}+\beta_{s,-}}e^{-\beta_{s,+}y_3}\right. \\
  &\qquad\qquad\qquad\qquad\quad\left.-\frac{(k_{s,+}^{-2}-k_{s,-}^{-2})k_{p,+}^{-2}|\boldsymbol{\zeta}|^2}{k_{p,+}^{-2}\beta_{p,+}+k_{p,-}^{-2}\beta_{p,-}}e^{-\beta_{p,+}y_3}\right], \\
  &T_{p,1}=T_{p,2}=\frac{C_0}{D}i\left[\frac{-(k_{s,+}^{-2}\beta_{s,+}+k_{s,-}^{-2}\beta_{s,-})\beta_{s,+}+(k_{s,+}^{-2}-k_{s,-}^{-2})|\boldsymbol{\zeta}|^2}{\beta_{s,+}+\beta_{s,-}}e^{\beta_{s,-}y_3}\right. \\
  &\qquad\qquad\qquad\qquad\quad\left.-\frac{(k_{s,+}^{-2}-k_{s,-}^{-2})k_{p,-}^{-2}|\boldsymbol{\zeta}|^2}{k_{p,+}^{-2}\beta_{p,+}+k_{p,-}^{-2}\beta_{p,-}}e^{\beta_{p,-}y_3}\right], \\
  &R_{p,3}=\frac{C_0|\boldsymbol{\zeta}|^2}{D}\left[k_{s,+}^{-2}e^{-\beta_{s,+}y_3}-\frac{(k_{s,+}^{-2}-k_{s,-}^{-2})k_{p,+}^{-2}\beta_{p,+}}{k_{p,+}^{-2}\beta_{p,+}+k_{p,-}^{-2}\beta_{p,-}}e^{-\beta_{p,+}y_3}\right], \\
  &T_{p,3}=\frac{C_0|\boldsymbol{\zeta}|^2}{D}\left[k_{s,-}^{-2}e^{\beta_{s,-}y_3}+\frac{(k_{s,+}^{-2}-k_{s,-}^{-2})k_{p,-}^{-2}\beta_{p,-}}{k_{p,+}^{-2}\beta_{p,+}+k_{p,-}^{-2}\beta_{p,-}}e^{\beta_{p,-}y_3}\right]. 
\end{align*}
Note that $C_0$ and $D$ here are the same as in the two-dimensional case. 

For $\mathbf{U}_{s,j}$, we obtain that  
\begin{align*}
&U_{s,1}^{(1)}(\mathbf{x},\mathbf{y})=\\
&\left\{
\begin{array}{ll}
	\displaystyle\frac{1}{(2\pi)^2}\int_{-\infty}^{+\infty}\int_{-\infty}^{+\infty}R_{s,1}^{(1)}e^{-\beta_{s,+}x_3}\zeta_1\zeta_2e^{i\boldsymbol{\zeta}\cdot(\mathbf{x}'-\mathbf{y}')}d\zeta_1d\zeta_2,&\mathbf{x}\in\R^3_+,\;\mathbf{y}\in\R^3_+,\\
	\displaystyle\frac{1}{(2\pi)^2}\int_{-\infty}^{+\infty}\int_{-\infty}^{+\infty}R_{s,1}^{(1)}e^{\beta_{s,-}x_3}\zeta_1\zeta_2e^{i\boldsymbol{\zeta}\cdot(\mathbf{x}'-\mathbf{y}')}d\zeta_1d\zeta_2,&\mathbf{x}\in\R^3_-,\;\mathbf{y}\in\R^3_+, \\
	\displaystyle\frac{1}{(2\pi)^2}\int_{-\infty}^{+\infty}\int_{-\infty}^{+\infty}T_{s,1}^{(1)}e^{-\beta_{s,+}x_3}\zeta_1\zeta_2e^{i\boldsymbol{\zeta}\cdot(\mathbf{x}'-\mathbf{y}')}d\zeta_1d\zeta_2,&\mathbf{x}\in\R^3_+,\;\mathbf{y}\in\R^3_-, \\
	\displaystyle\frac{1}{(2\pi)^2}\int_{-\infty}^{+\infty}\int_{-\infty}^{+\infty}T_{s,1}^{(1)}e^{\beta_{s,-}x_3}\zeta_1\zeta_2e^{i\boldsymbol{\zeta}\cdot(\mathbf{x}'-\mathbf{y}')}d\zeta_1d\zeta_2,&\mathbf{x}\in\R^3_-,\;\mathbf{y}\in\R^3_-, 
\end{array}
\right.
\end{align*}
and 
\begin{align*}
&U_{s,1}^{(2)}(\mathbf{x},\mathbf{y})=\\
&\left\{
\begin{array}{ll}
	\displaystyle\frac{1}{(2\pi)^2}\int_{-\infty}^{+\infty}\int_{-\infty}^{+\infty}R_{s,1}^{(2)}e^{-\beta_{s,+}x_3}\zeta_1^2e^{i\boldsymbol{\zeta}\cdot(\mathbf{x}'-\mathbf{y}')}d\zeta_1d\zeta_2,&\mathbf{x}\in\R^3_+,\;\mathbf{y}\in\R^3_+,\\
	\displaystyle\frac{1}{(2\pi)^2}\int_{-\infty}^{+\infty}\int_{-\infty}^{+\infty}R_{s,1}^{(2)}e^{\beta_{s,-}x_3}\zeta_1^2e^{i\boldsymbol{\zeta}\cdot(\mathbf{x}'-\mathbf{y}')}d\zeta_1d\zeta_2,&\mathbf{x}\in\R^3_-,\;\mathbf{y}\in\R^3_+, \\
	\displaystyle\frac{1}{(2\pi)^2}\int_{-\infty}^{+\infty}\int_{-\infty}^{+\infty}T_{s,1}^{(2)}e^{-\beta_{s,+}x_3}\zeta_1^2e^{i\boldsymbol{\zeta}\cdot(\mathbf{x}'-\mathbf{y}')}d\zeta_1d\zeta_2,&\mathbf{x}\in\R^3_+,\;\mathbf{y}\in\R^3_-, \\
	\displaystyle\frac{1}{(2\pi)^2}\int_{-\infty}^{+\infty}\int_{-\infty}^{+\infty}T_{s,1}^{(2)}e^{\beta_{s,-}x_3}\zeta_1^2e^{i\boldsymbol{\zeta}\cdot(\mathbf{x}'-\mathbf{y}')}d\zeta_1d\zeta_2,&\mathbf{x}\in\R^3_-,\;\mathbf{y}\in\R^3_-, 
\end{array}
\right.
\end{align*}
and $U_{s,1}^{(3)}(\mathbf{x},\mathbf{y})=0$, where 
\ben
  \begin{aligned}
  	R_{s,1}^{(1)}=-R_{s,1}^{(2)}=&\;\frac{C_0}{D}\left[(k_{p,+}^{-2}-k_{p,-}^{-2})\frac{(k_{s,+}^{-2}\beta_{s,+}+k_{s,-}^{-2}\beta_{s,-})\beta_{s,-}+(k_{s,+}^{-2}-k_{s,-}^{-2})|\boldsymbol{\zeta}|^2}{(k_{s,+}^{-2}\beta_{s,+}+k_{s,-}^{-2}\beta_{s,-})(\beta_{s,+}+\beta_{s,-})}\right. \\
  &\qquad\left.e^{-\beta_{s,+}y_3}-k_{p,+}^{-2}e^{-\beta_{p,+}y_3}\right], \\
  T_{s,1}^{(1)}=-T_{s,1}^{(2)}=&\;\frac{C_0}{D}\left[-(k_{p,+}^{-2}-k_{p,-}^{-2})\frac{(k_{s,+}^{-2}\beta_{s,+}+k_{s,-}^{-2}\beta_{s,-})\beta_{s,+}-(k_{s,+}^{-2}-k_{s,-}^{-2})|\boldsymbol{\zeta}|^2}{(k_{s,+}^{-2}\beta_{s,+}+k_{s,-}^{-2}\beta_{s,-})(\beta_{s,+}+\beta_{s,-})}\right. \\
  &\qquad\left.e^{\beta_{s,-}y_3}-k_{p,-}^{-2}e^{\beta_{p,-}y_3}\right]. 
  \end{aligned}
\enn

Further, we have 
\begin{align*}
&U_{s,2}^{(1)}(\mathbf{x},\mathbf{y})=\\
&\left\{
\begin{array}{ll}
	\displaystyle\frac{1}{(2\pi)^2}\int_{-\infty}^{+\infty}\int_{-\infty}^{+\infty}R_{s,2}^{(1)}e^{-\beta_{s,+}x_3}\zeta_2^2e^{i\boldsymbol{\zeta}\cdot(\mathbf{x}'-\mathbf{y}')}d\zeta_1d\zeta_2,&\mathbf{x}\in\R^3_+,\;\mathbf{y}\in\R^3_+,\\
	\displaystyle\frac{1}{(2\pi)^2}\int_{-\infty}^{+\infty}\int_{-\infty}^{+\infty}R_{s,2}^{(1)}e^{\beta_{s,-}x_3}\zeta_2^2e^{i\boldsymbol{\zeta}\cdot(\mathbf{x}'-\mathbf{y}')}d\zeta_1d\zeta_2,&\mathbf{x}\in\R^3_-,\;\mathbf{y}\in\R^3_+, \\
	\displaystyle\frac{1}{(2\pi)^2}\int_{-\infty}^{+\infty}\int_{-\infty}^{+\infty}T_{s,2}^{(1)}e^{-\beta_{s,+}x_3}\zeta_2^2e^{i\boldsymbol{\zeta}\cdot(\mathbf{x}'-\mathbf{y}')}d\zeta_1d\zeta_2,&\mathbf{x}\in\R^3_+,\;\mathbf{y}\in\R^3_-, \\
	\displaystyle\frac{1}{(2\pi)^2}\int_{-\infty}^{+\infty}\int_{-\infty}^{+\infty}T_{s,2}^{(1)}e^{\beta_{s,-}x_3}\zeta_2^2e^{i\boldsymbol{\zeta}\cdot(\mathbf{x}'-\mathbf{y}')}d\zeta_1d\zeta_2,&\mathbf{x}\in\R^3_-,\;\mathbf{y}\in\R^3_-, 
\end{array}
\right.
\end{align*}
and 
\begin{align*}
&U_{s,2}^{(2)}(\mathbf{x},\mathbf{y})=\\
&\left\{
\begin{array}{ll}
	\displaystyle\frac{1}{(2\pi)^2}\int_{-\infty}^{+\infty}\int_{-\infty}^{+\infty}R_{s,2}^{(2)}e^{-\beta_{s,+}x_3}\zeta_1\zeta_2e^{i\boldsymbol{\zeta}\cdot(\mathbf{x}'-\mathbf{y}')}d\zeta_1d\zeta_2,&\mathbf{x}\in\R^3_+,\;\mathbf{y}\in\R^3_+,\\
	\displaystyle\frac{1}{(2\pi)^2}\int_{-\infty}^{+\infty}\int_{-\infty}^{+\infty}R_{s,2}^{(2)}e^{\beta_{s,-}x_3}\zeta_1\zeta_2e^{i\boldsymbol{\zeta}\cdot(\mathbf{x}'-\mathbf{y}')}d\zeta_1d\zeta_2,&\mathbf{x}\in\R^3_-,\;\mathbf{y}\in\R^3_+, \\
	\displaystyle\frac{1}{(2\pi)^2}\int_{-\infty}^{+\infty}\int_{-\infty}^{+\infty}T_{s,2}^{(2)}e^{-\beta_{s,+}x_3}\zeta_1\zeta_2e^{i\boldsymbol{\zeta}\cdot(\mathbf{x}'-\mathbf{y}')}d\zeta_1d\zeta_2,&\mathbf{x}\in\R^3_+,\;\mathbf{y}\in\R^3_-, \\
	\displaystyle\frac{1}{(2\pi)^2}\int_{-\infty}^{+\infty}\int_{-\infty}^{+\infty}T_{s,2}^{(2)}e^{\beta_{s,-}x_3}\zeta_1\zeta_2e^{i\boldsymbol{\zeta}\cdot(\mathbf{x}'-\mathbf{y}')}d\zeta_1d\zeta_2,&\mathbf{x}\in\R^3_-,\;\mathbf{y}\in\R^3_-, 
\end{array}
\right.
\end{align*}
and $U_{s,2}^{(3)}(\mathbf{x},\mathbf{y})=0$, where 
$
  R_{s,2}^{(l)}=R_{s,1}^{(l)},~T_{s,2}^{(l)}=T_{s,1}^{(l)} 
$
for $l=1,2$. 

Finally, it is yielded that 
\begin{align*}
&U_{s,3}^{(1)}(\mathbf{x},\mathbf{y})=\\
&\left\{
\begin{array}{ll}
	\displaystyle\frac{1}{(2\pi)^2}\int_{-\infty}^{+\infty}\int_{-\infty}^{+\infty}R_{s,3}^{(1)}e^{-\beta_{s,+}x_3}\zeta_2e^{i\boldsymbol{\zeta}\cdot(\mathbf{x}'-\mathbf{y}')}d\zeta_1d\zeta_2,&\mathbf{x}\in\R^3_+,\;\mathbf{y}\in\R^3_+,\\
	\displaystyle\frac{1}{(2\pi)^2}\int_{-\infty}^{+\infty}\int_{-\infty}^{+\infty}R_{s,3}^{(1)}e^{\beta_{s,-}x_3}\zeta_2e^{i\boldsymbol{\zeta}\cdot(\mathbf{x}'-\mathbf{y}')}d\zeta_1d\zeta_2,&\mathbf{x}\in\R^3_-,\;\mathbf{y}\in\R^3_+, \\
	\displaystyle\frac{1}{(2\pi)^2}\int_{-\infty}^{+\infty}\int_{-\infty}^{+\infty}T_{s,3}^{(1)}e^{-\beta_{s,+}x_3}\zeta_2e^{i\boldsymbol{\zeta}\cdot(\mathbf{x}'-\mathbf{y}')}d\zeta_1d\zeta_2,&\mathbf{x}\in\R^3_+,\;\mathbf{y}\in\R^3_-, \\
	\displaystyle\frac{1}{(2\pi)^2}\int_{-\infty}^{+\infty}\int_{-\infty}^{+\infty}T_{s,3}^{(1)}e^{\beta_{s,-}x_3}\zeta_2e^{i\boldsymbol{\zeta}\cdot(\mathbf{x}'-\mathbf{y}')}d\zeta_1d\zeta_2,&\mathbf{x}\in\R^3_-,\;\mathbf{y}\in\R^3_-, 
\end{array}
\right.
\end{align*}
and 
\begin{align*}
&U_{s,3}^{(2)}(\mathbf{x},\mathbf{y})=\\
&\left\{
\begin{array}{ll}
	\displaystyle\frac{1}{(2\pi)^2}\int_{-\infty}^{+\infty}\int_{-\infty}^{+\infty}R_{s,3}^{(2)}e^{-\beta_{s,+}x_3}\zeta_1e^{i\boldsymbol{\zeta}\cdot(\mathbf{x}'-\mathbf{y}')}d\zeta_1d\zeta_2,&\mathbf{x}\in\R^3_+,\;\mathbf{y}\in\R^3_+,\\
	\displaystyle\frac{1}{(2\pi)^2}\int_{-\infty}^{+\infty}\int_{-\infty}^{+\infty}R_{s,3}^{(2)}e^{\beta_{s,-}x_3}\zeta_1e^{i\boldsymbol{\zeta}\cdot(\mathbf{x}'-\mathbf{y}')}d\zeta_1d\zeta_2,&\mathbf{x}\in\R^3_-,\;\mathbf{y}\in\R^3_+, \\
	\displaystyle\frac{1}{(2\pi)^2}\int_{-\infty}^{+\infty}\int_{-\infty}^{+\infty}T_{s,3}^{(2)}e^{-\beta_{s,+}x_3}\zeta_1e^{i\boldsymbol{\zeta}\cdot(\mathbf{x}'-\mathbf{y}')}d\zeta_1d\zeta_2,&\mathbf{x}\in\R^3_+,\;\mathbf{y}\in\R^3_-, \\
	\displaystyle\frac{1}{(2\pi)^2}\int_{-\infty}^{+\infty}\int_{-\infty}^{+\infty}T_{s,3}^{(2)}e^{\beta_{s,-}x_3}\zeta_1e^{i\boldsymbol{\zeta}\cdot(\mathbf{x}'-\mathbf{y}')}d\zeta_1d\zeta_2,&\mathbf{x}\in\R^3_-,\;\mathbf{y}\in\R^3_-, 
\end{array}
\right.
\end{align*}
and $U_{s,3}^{(3)}(\mathbf{x},\mathbf{y})=0$, where 
\ben
  \begin{aligned}
  	R_{s,3}^{(1)}=-R_{s,3}^{(2)}=&\;\frac{C_0}{D}i\left[-\frac{(k_{p,+}^{-2}-k_{p,-}^{-2})k_{s,+}^{-2}|\boldsymbol{\zeta}|^2}{k_{s,+}^{-2}\beta_{s,+}+k_{s,-}^{-2}\beta_{s,-}}e^{-\beta_{s,+}y_3}+k_{p,+}^{-2}\beta_{p,+}e^{-\beta_{p,+}y_3}\right], \\
  T_{s,3}^{(1)}=-T_{s,3}^{(2)}=&\;\frac{C_0}{D}i\left[-\frac{(k_{p,+}^{-2}-k_{p,-}^{-2})k_{s,-}^{-2}|\boldsymbol{\zeta}|^2}{k_{s,+}^{-2}\beta_{s,+}+k_{s,-}^{-2}\beta_{s,-}}e^{\beta_{s,-}y_3}-k_{p,-}^{-2}\beta_{p,-}e^{\beta_{p,-}y_3}\right]. 
  \end{aligned}
\enn

Next we aim to verify that all the expressions above satisfy the radiation condition. To this end, we change the corresponding Fourier transform to the Hankel transform. Denote by $J_m$ and $H_m^{(1)}$ the Bessel and Hankel function of the first kind of order $m$. 
\begin{lemma}\label{lem3.5}
	For $\alpha\in[0,2\pi]$, the following hold 
	\begin{align*}
	  &\frac{1}{2\pi}\int_{0}^{2\pi}e^{it\cos(\gamma-\alpha)}d\gamma=J_0(t), \\
	  &\frac{1}{2\pi}\int_{0}^{2\pi}\cos\gamma e^{it\cos(\gamma-\alpha)}d\gamma=iJ_1(t)\cos\alpha, \\
	  &\frac{1}{2\pi}\int_{0}^{2\pi}\sin\gamma e^{it\cos(\gamma-\alpha)}d\gamma=iJ_1(t)\sin\alpha, \\
	  &\frac{1}{2\pi}\int_{0}^{2\pi}\sin\gamma\cos\gamma e^{it\cos(\gamma-\alpha)}d\gamma=-J_2(t)\sin\alpha\cos\alpha, \\
	  &\frac{1}{2\pi}\int_{0}^{2\pi}\cos^2\gamma e^{it\cos(\gamma-\alpha)}d\gamma=\frac{1}{2}\left(J_0(t)-J_2(t)\cos2\alpha\right), \\
	  &\frac{1}{2\pi}\int_{0}^{2\pi}\sin^2\gamma e^{it\cos(\gamma-\alpha)}d\gamma=\frac{1}{2}\left(J_0(t)+J_2(t)\cos2\alpha\right). 
	\end{align*}
\end{lemma}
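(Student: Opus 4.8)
\emph{Proof proposal.} The plan is to reduce all six identities to the Jacobi--Anger expansion together with elementary trigonometric identities. Recall that
\[
  e^{it\cos\theta}=\sum_{n=-\infty}^{\infty}i^nJ_n(t)e^{in\theta},
\]
the series converging uniformly in $\theta$ on bounded sets, so term-by-term integration over $[0,2\pi]$ is legitimate. Combining this with $J_{-n}(t)=(-1)^nJ_n(t)$ and the orthogonality of $\{e^{in\theta}\}$, one first establishes the two building blocks
\[
  \frac{1}{2\pi}\int_0^{2\pi}\cos(n\theta)\,e^{it\cos\theta}\,d\theta=i^nJ_n(t),
  \qquad
  \frac{1}{2\pi}\int_0^{2\pi}\sin(n\theta)\,e^{it\cos\theta}\,d\theta=0
\]
for every integer $n\ge0$ (the second also follows at once from the oddness of $\sin(n\theta)e^{it\cos\theta}$ over a full period, using $i^{-n}(-1)^n=i^n$ to check the first). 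In particular the cases $n=0,1,2$ give $J_0(t)$, $iJ_1(t)$, and $i^2J_2(t)=-J_2(t)$.

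Next I would perform the substitution $\gamma=\theta+\alpha$ in each of the six integrals; since the integrand is $2\pi$-periodic in $\gamma$, the interval remains $[0,2\pi]$, and every integral takes the form $\frac{1}{2\pi}\int_0^{2\pi}p(\theta+\alpha)\,e^{it\cos\theta}\,d\theta$ with $p$ one of $1,\cos\gamma,\sin\gamma,\sin\gamma\cos\gamma,\cos^2\gamma,\sin^2\gamma$. Using the angle-addition formulas together with $\sin\gamma\cos\gamma=\tfrac12\sin2\gamma$, $\cos^2\gamma=\tfrac12(1+\cos2\gamma)$, $\sin^2\gamma=\tfrac12(1-\cos2\gamma)$, I expand $p(\theta+\alpha)$ as a finite combination of $1,\cos\theta,\sin\theta,\cos2\theta,\sin2\theta$ with $\alpha$-dependent coefficients; for instance $\cos(\theta+\alpha)=\cos\alpha\cos\theta-\sin\alpha\sin\theta$ and $\cos^2(\theta+\alpha)=\tfrac12+\tfrac12\cos2\alpha\cos2\theta-\tfrac12\sin2\alpha\sin2\theta$.

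Finally I would apply the two building blocks mode by mode: all $\sin n\theta$ contributions vanish, and the surviving constant, $\cos\theta$, and $\cos2\theta$ terms contribute $J_0(t)$, $iJ_1(t)$, and $-J_2(t)$ respectively with the appropriate $\alpha$-coefficients, which collapses each expanded integral to the claimed right-hand side (e.g. the $\cos^2\gamma$ case gives $\tfrac12J_0(t)+\tfrac12\cos2\alpha\cdot(-J_2(t))=\tfrac12(J_0(t)-J_2(t)\cos2\alpha)$, and the remaining five are entirely analogous). No genuine obstacle arises; the only points deserving a word of care are the justification of the term-by-term integration of the Jacobi--Anger series on the compact $\theta$-range and the bookkeeping of signs and which Fourier modes survive — the computation itself is routine.
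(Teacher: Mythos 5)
Your proof is correct, and the checks of the coefficients (e.g. $i^{-n}(-1)^n=i^n$, and the $\cos^2\gamma$ case collapsing to $\tfrac12(J_0-J_2\cos2\alpha)$) all go through. The overall skeleton is the same as the paper's: both reduce everything, via the shift $\gamma=\theta+\alpha$ and the angle-addition/double-angle identities, to the base integrals $\frac{1}{2\pi}\int_0^{2\pi}\cos(n\theta)e^{it\cos\theta}\,d\theta=i^nJ_n(t)$ and $\frac{1}{2\pi}\int_0^{2\pi}\sin(n\theta)e^{it\cos\theta}\,d\theta=0$ for $n=0,1,2$. Where you differ is in how those base integrals are obtained: you invoke the Jacobi--Anger expansion, orthogonality of $\{e^{in\theta}\}$, and $J_{-n}=(-1)^nJ_n$, which treats all modes uniformly and makes the vanishing of the sine modes automatic; the paper instead starts from the integral representation $J_0(t)=\frac{1}{2\pi}\int_0^{2\pi}e^{it\cos\gamma}d\gamma$ and generates the $n=1,2$ cases by differentiating under the integral via the recurrences $J_1=-J_0'$ and $J_2=J_0-2J_1'$, with the sine integrals killed by an oddness argument. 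Your route buys a cleaner, more systematic derivation valid for every $n$ at the mild cost of justifying term-by-term integration of the series (which, as you note, is immediate from uniform convergence on the compact $\theta$-range); the paper's route uses only the single classical integral representation and elementary calculus. Both are complete proofs.
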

\begin{proof}
	By basic calculation we see that 
	\ben
	  \begin{aligned}
	  	\frac{1}{2\pi}\int_{0}^{2\pi}e^{it\cos(\gamma-\alpha)}d\gamma=&\;\frac{1}{2\pi}\left(\int_{-\alpha}^{0}+\int_{0}^{2\pi-\alpha}\right)e^{it\cos\gamma}d\gamma \\
	  =&\;\frac{1}{2\pi}\int_{0}^{2\pi}e^{it\cos\gamma}d\gamma=J_0(t), 
	  \end{aligned}
	\enn
	where the last equality comes from the integral representation of the Bessel function \cite{GN95}. From the recurrence formula of the Bessel functions \cite{GN95}, it follows that 
	\begin{align*}
	  J_1(t)=-J_0'(t)=\frac{-i}{2\pi}\int_{0}^{2\pi}\cos\gamma e^{it\cos\gamma}d\gamma. 
	\end{align*}
	Further, it is easily deduced that 
	$$
	  \int_{0}^{2\pi}\sin\gamma e^{it\cos\gamma}d\gamma=\int_{-\pi}^{\pi}\sin\gamma e^{it\cos\gamma}d\gamma=0. 
	$$
	Therefore, 
	\ben
	  \begin{aligned}
	  	&\;\frac{1}{2\pi}\int_{0}^{2\pi}\cos\gamma e^{it\cos(\gamma-\alpha)}d\gamma \\
	  =&\;\frac{1}{2\pi}\int_{-\alpha}^{2\pi-\alpha}\cos(\gamma+\alpha)e^{it\cos\gamma}d\gamma \\
	  =&\;\frac{1}{2\pi}\left(\cos\alpha\int_{-\alpha}^{2\pi-\alpha}\cos\gamma e^{it\cos\gamma}d\gamma-\sin\alpha\int_{-\alpha}^{2\pi-\alpha}\sin\gamma e^{it\cos\gamma}d\gamma\right) \\
	  =&\;\frac{1}{2\pi}\left(\cos\alpha\int_{0}^{2\pi}\cos\gamma e^{it\cos\gamma}d\gamma-\sin\alpha\int_{0}^{2\pi}\sin\gamma e^{it\cos\gamma}d\gamma\right) \\
	  =&\;iJ_1(t)\cos\alpha, 
	  \end{aligned}
	\enn
	and 
	\ben
	  \begin{aligned}
	  	&\;\frac{1}{2\pi}\int_{0}^{2\pi}\sin\gamma e^{it\cos(\gamma-\alpha)}d\gamma \\
	  =&\;\frac{1}{2\pi}\int_{-\alpha}^{2\pi-\alpha}\sin(\gamma+\alpha)e^{it\cos\gamma}d\gamma \\
	  =&\;\frac{1}{2\pi}\left(\sin\alpha\int_{0}^{2\pi}\cos\gamma e^{it\cos\gamma}d\gamma+\cos\alpha\int_{0}^{2\pi}\sin\gamma e^{it\cos\gamma}d\gamma\right) \\
	  =&\;iJ_1(t)\sin\alpha. 
	  \end{aligned}
	\enn
	Finally, it can be verified that 
	$$
	  J_2(t)=J_0(t)-2J_1'(t)=\frac{-1}{2\pi}\int_{0}^{2\pi}\cos2\gamma e^{it\cos\gamma}d\gamma
	$$
	and
	$$
	  \int_{0}^{2\pi}\sin2\gamma e^{it\cos\gamma}d\gamma=0. 
	$$
	Thus, following the similar procedure we can obtain the left three equalities and finish the proof. 
\end{proof}
\begin{theorem}\label{thm3.6}
	For $f=f(|\boldsymbol{\zeta}|^2,x_3,y_3)$, under the change of variables 
	$$
	  x_1-y_1=\rho\cos\alpha,~x_2-y_2=\rho\sin\alpha,~\alpha\in[0,2\pi], 
	$$
	and
	$$
	  \zeta_1=\xi\cos\gamma,~\zeta_2=\xi\sin\gamma,~\gamma\in[0,2\pi], 
	$$
	we have 
	\begin{align*}
	  &\frac{1}{(2\pi)^2}\int_{-\infty}^{+\infty}\int_{-\infty}^{+\infty}f(|\boldsymbol{\zeta}|^2,x_3,y_3)e^{i\boldsymbol{\zeta}\cdot(\mathbf{x}'-\mathbf{y}')}d\zeta_1d\zeta_2=\frac{1}{4\pi}\int_{\mathcal{C}}f(\xi^2,x_3,y_3)H_0^{(1)}(\xi\rho)\xi d\xi, \\
	  &\frac{1}{(2\pi)^2}\int_{-\infty}^{+\infty}\int_{-\infty}^{+\infty}f(|\boldsymbol{\zeta}|^2,x_3,y_3)\zeta_1e^{i\boldsymbol{\zeta}\cdot(\mathbf{x}'-\mathbf{y}')}d\zeta_1d\zeta_2=\frac{i}{4\pi}\cos\alpha\int_{\mathcal{C}}f(\xi^2,x_3,y_3)H_1^{(1)}(\xi\rho)\xi^2d\xi, \\
	  &\frac{1}{(2\pi)^2}\int_{-\infty}^{+\infty}\int_{-\infty}^{+\infty}f(|\boldsymbol{\zeta}|^2,x_3,y_3)\zeta_2e^{i\boldsymbol{\zeta}\cdot(\mathbf{x}'-\mathbf{y}')}d\zeta_1d\zeta_2=\frac{i}{4\pi}\sin\alpha\int_{\mathcal{C}}f(\xi^2,x_3,y_3)H_1^{(1)}(\xi\rho)\xi^2d\xi, \\ 
	  &\frac{1}{(2\pi)^2}\int_{-\infty}^{+\infty}\int_{-\infty}^{+\infty}f(|\boldsymbol{\zeta}|^2,x_3,y_3)\zeta_1\zeta_2e^{i\boldsymbol{\zeta}\cdot(\mathbf{x}'-\mathbf{y}')}d\zeta_1d\zeta_2=\frac{-1}{8\pi}\sin2\alpha\int_{\mathcal{C}}f(\xi^2,x_3,y_3)H_2^{(1)}(\xi\rho)\xi^3d\xi, \\
	  &\frac{1}{(2\pi)^2}\int_{-\infty}^{+\infty}\int_{-\infty}^{+\infty}f(|\boldsymbol{\zeta}|^2,x_3,y_3)\zeta_1^2e^{i\boldsymbol{\zeta}\cdot(\mathbf{x}'-\mathbf{y}')}d\zeta_1d\zeta_2 \\
	  &\qquad\qquad\qquad\qquad\qquad\qquad=\frac{1}{8\pi}\int_{\mathcal{C}}f(\xi^2,x_3,y_3)(H_0^{(1)}(\xi\rho)-H_2^{(1)}(\xi\rho)\cos2\alpha)\xi^3d\xi, \\
	  &\frac{1}{(2\pi)^2}\int_{-\infty}^{+\infty}\int_{-\infty}^{+\infty}f(|\boldsymbol{\zeta}|^2,x_3,y_3)\zeta_2^2e^{i\boldsymbol{\zeta}\cdot(\mathbf{x}'-\mathbf{y}')}d\zeta_1d\zeta_2 \\
	  &\qquad\qquad\qquad\qquad\qquad\qquad=\frac{1}{8\pi}\int_{\mathcal{C}}f(\xi^2,x_3,y_3)(H_0^{(1)}(\xi\rho)+H_2^{(1)}(\xi\rho)\cos2\alpha)\xi^3d\xi, 
	\end{align*}
	where the integration path $\mathcal{C}$ is depicted in Figure {\rm 3.3}. 
\end{theorem}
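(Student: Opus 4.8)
The plan is to reduce all six identities to one common two-step scheme: an angular integration, dispatched by Lemma~\ref{lem3.5}, followed by a folding of the resulting half-line Bessel integral onto the contour $\mathcal{C}$ via the connection formulas relating $J_m$, $H_m^{(1)}$ and $H_m^{(2)}$.

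First I would introduce polar coordinates in the $\boldsymbol{\zeta}$-plane, $\zeta_1=\xi\cos\gamma$, $\zeta_2=\xi\sin\gamma$ with $\xi\in(0,\infty)$, $\gamma\in[0,2\pi]$, so that $d\zeta_1\,d\zeta_2=\xi\,d\xi\,d\gamma$ and, combined with $x_1-y_1=\rho\cos\alpha$, $x_2-y_2=\rho\sin\alpha$, one has $\boldsymbol{\zeta}\cdot(\mathbf{x}'-\mathbf{y}')=\xi\rho\cos(\gamma-\alpha)$. Since $f$ depends on $\boldsymbol{\zeta}$ only through $|\boldsymbol{\zeta}|^2=\xi^2$, it pulls out of the inner integral, and the six prefactors become $1,\ \xi\cos\gamma,\ \xi\sin\gamma,\ \xi^2\cos\gamma\sin\gamma,\ \xi^2\cos^2\gamma,\ \xi^2\sin^2\gamma$. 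Applying the six formulas of Lemma~\ref{lem3.5} with $t=\xi\rho$ then collapses each double integral to a single half-line integral $\frac{1}{2\pi}\int_0^{\infty}\xi^{n}f(\xi^2,x_3,y_3)\,m_\alpha\,J_m(\xi\rho)\,d\xi$, where $m\in\{0,1,2\}$, $n$ is the accumulated power of $\xi$, and $m_\alpha$ is the $\alpha$-factor produced by the lemma (one of $1$, $i\cos\alpha$, $i\sin\alpha$, $-\sin\alpha\cos\alpha$, $\tfrac12(1\mp\cos2\alpha)$); these $m_\alpha$ already match the $\alpha$-dependent prefactors on the right-hand sides, with $\zeta_1^2$ and $\zeta_2^2$ producing a sum of a $J_0$-term and a $J_2$-term.

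The crux is then the one-dimensional identity $\int_0^{\infty}\xi^{n}f(\xi^2,x_3,y_3)J_m(\xi\rho)\,d\xi=\tfrac12\int_{\mathcal{C}}\xi^{n}f(\xi^2,x_3,y_3)H_m^{(1)}(\xi\rho)\,d\xi$. Here I would write $J_m=\tfrac12(H_m^{(1)}+H_m^{(2)})$, invoke the analytic-continuation relation $H_m^{(2)}(z)=(-1)^{m+1}H_m^{(1)}(ze^{i\pi})$, and substitute $\xi\mapsto-\xi$ in the $H_m^{(2)}$-integral. Two structural facts make this exact. The accumulated power satisfies $n\equiv m+1\pmod 2$ (it equals the degree of the original $\boldsymbol{\zeta}$-monomial plus one, and that degree always has the same parity as $m$), so the sign $(-1)^{n}$ picked up by the monomial under $\xi\mapsto-\xi$ cancels the sign $(-1)^{m+1}$ from the continuation formula. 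And $f(\xi^2,\cdot)$, together with the radicals $\beta_{a,\pm}(\xi)=\sqrt{\xi^2-k_{a,\pm}^2}$ that enter $f$, is invariant under $\xi\mapsto-\xi$, because the union of their branch cuts (Figure~\ref{fig3.1}) is symmetric about the origin. Hence the $H_m^{(2)}$-contribution over $(0,\infty)$ coincides with the $H_m^{(1)}$-contribution over $(-\infty,0)$, the two reassemble into $\tfrac12\int_{-\infty}^{\infty}$, and $\R$ may be deformed onto $\mathcal{C}$ since $\xi^{n}H_m^{(1)}(\xi\rho)\to0$ at the origin, the branch points $\pm k_{a,\pm}$ are integrable singularities, and $D$ has no real zero so no pole is crossed. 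Feeding this back and multiplying by the constants accumulated above ($\tfrac{1}{2\pi}\cdot\tfrac12=\tfrac{1}{4\pi}$ for $m=0$, $\tfrac{1}{2\pi}\cdot\tfrac12\cdot i=\tfrac{i}{4\pi}$ for $m=1$, and so on) delivers exactly the six claimed formulas.

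The main obstacle is the careful bookkeeping in this last step: fixing the branch of $H_m^{(1)}(ze^{i\pi})$ consistently with the prescribed branches of $\beta_{a,\pm}$ on the negative real axis, where the argument $-\xi\rho$ lands on the cut of $H_m^{(1)}$; checking that $\xi\mapsto-\xi$ genuinely interchanges the two branch-cut configurations; and justifying both the exchange of the $\xi$- and $\gamma$-integrations and the deformation onto $\mathcal{C}$ for integrands that are only conditionally convergent. For the convergence issues I would first establish the identities for rapidly decaying $f$ and pass to the limit, or else work within the oscillatory-integral and contour-estimate framework of \cite{HED05,NB84,CP17}, exactly as in the two-dimensional analysis of Section~\ref{sec3.1}.
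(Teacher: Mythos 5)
Your proposal is correct and follows essentially the same route as the paper: pass to polar coordinates, apply Lemma \ref{lem3.5} to reduce each angular integral to $J_m$, and then fold the half-line Bessel integral onto $\mathcal{C}$ via the connection formula $J_m(z)=\tfrac12\bigl(H_m^{(1)}(z)-(-1)^mH_m^{(1)}(-z)\bigr)$, which is exactly the identity your $H_m^{(2)}$-continuation argument reproduces. The paper's proof is a two-line citation of these two facts; your parity bookkeeping ($n\equiv m+1 \pmod 2$), the evenness of $\beta_{a,\pm}$ on the real line, and the behaviour of $\xi^nH_m^{(1)}(\xi\rho)$ at the origin are precisely the details it leaves implicit, and they all check out.
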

\begin{proof}
	Directly utilize Lemma \ref{lem3.5} and the fact that \cite{GN95} 
	$$
	  J_m(z)=\frac{1}{2}(H_m^{(1)}(z)-(-1)^mH_m^{(1)}(-z)),~-\pi<\arg z\leq\pi, 
	$$
	for $m=0,1,2$. 
\end{proof}

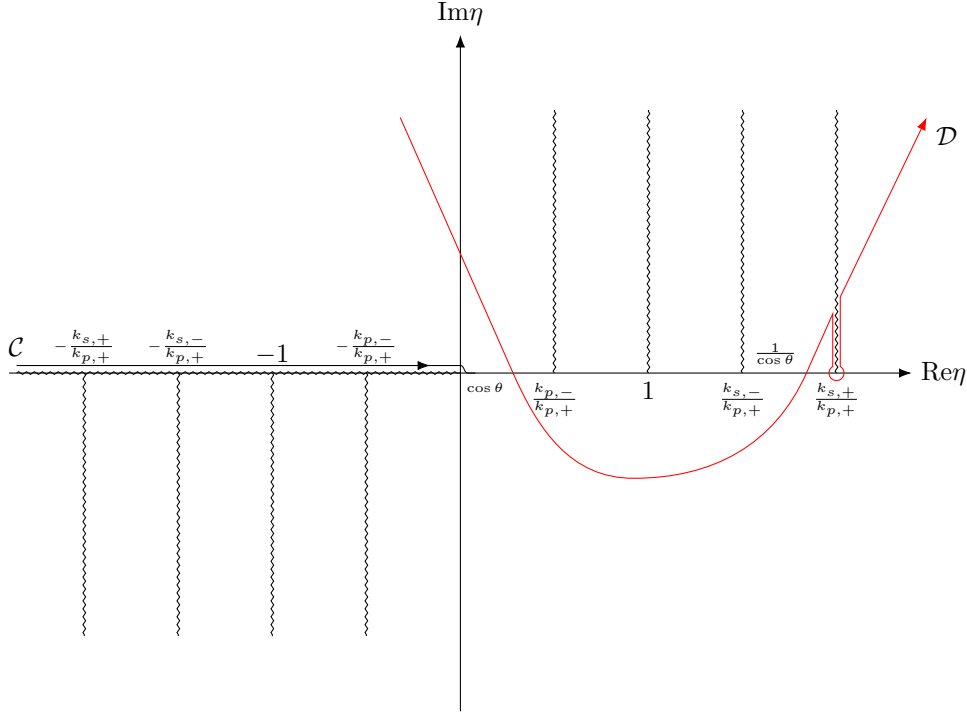
\begin{figure}\label{fig3.3}
\begin{adjustbox}{margin=0cm 0cm 0cm 0cm}
	\begin{tikzpicture}
		\tikzmath{\value=sqrt(3)/20;}
		\draw[-Latex] (-6,0) -- (6,0) node[right]{$\Rt\eta$}; 
		\draw[-Latex] (0,-4.5) -- (0,4.5) node[above]{$\I\eta$}; 
		\draw[decorate,decoration={zigzag, amplitude=0.5pt, segment length=3pt}] (1.25,0) node[font=\tiny, below]{$\frac{k_{p,-}}{k_{p,+}}$} -- (1.25,3.5); 
		\draw[decorate,decoration={zigzag, amplitude=0.5pt, segment length=3pt}] (-1.25,0) node[font=\tiny,above]{$-\frac{k_{p,-}}{k_{p,+}}$} -- (-1.25,-3.5); 
		\draw[decorate,decoration={zigzag, amplitude=0.5pt, segment length=3pt}] (2.5,0) node[below]{$1$} -- (2.5,3.5); 
		\draw[decorate,decoration={zigzag, amplitude=0.5pt, segment length=3pt}] (-2.5,0) node[above]{$-1$} -- (-2.5,-3.5); 
		\draw[decorate,decoration={zigzag, amplitude=0.5pt, segment length=3pt}] (3.75,0) node[font=\tiny,below]{$\frac{k_{s,-}}{k_{p,+}}$} -- (3.75,3.5); 
		\draw[decorate,decoration={zigzag, amplitude=0.5pt, segment length=3pt}] (-3.75,0) node[font=\tiny,above]{$-\frac{k_{s,-}}{k_{p,+}}$} -- (-3.75,-3.5); 
		\draw[decorate,decoration={zigzag, amplitude=0.5pt, segment length=3pt}] (5,0) node[font=\tiny,below]{$\frac{k_{s,+}}{k_{p,+}}$} -- (5,3.5); 
		\draw[decorate,decoration={zigzag, amplitude=0.5pt, segment length=3pt}] (-5,0) node[font=\tiny,above]{$-\frac{k_{s,+}}{k_{p,+}}$} -- (-5,-3.5); 
		\draw[red, smooth, -Latex] (-0.8,3.4) -- (0.7,0) node[font=\tiny, black, below left]{$\cos\theta$} to[out=-atan(34/15), in=180] (2.3,-1.4) to[out=0,in=180+atan(9/4)] (4.6,0) node[font=\tiny,black, above left]{$\frac{1}{\cos\theta}$} -- (4.95,0.7875) -- (4.95,\value) arc (120:420:0.1) -- (5.05,1.0125) -- (6.2,3.4) node[black, below right]{$\mathcal{D}$}; 
		\draw[decorate,decoration={zigzag, amplitude=0.5pt, segment length=3pt}] (-5.9,0) -- (0,0); 
		\draw[black,smooth] (-5.9,0.1) node[above]{$\mathcal{C}$} -- (0,0.1) to[out=0, in=180] (0.1,0) -- (0.2,0); 
		\draw[-Latex] (-0.5,0.1) -- (-0.4,0.1); 
	\end{tikzpicture}
\end{adjustbox}
\caption{Integration path $\mathcal{C}$ in the Hankel transform and the steepest descent path $\mathcal{D}$ utilized in the asymptotic approximation.}
\end{figure}

For $\mathbf{y}\in\R^3_+\cup\R^3_-$ and $\mathbf{x}=r\mathbf{\hat x}=r(\cos\varphi_x\cos\theta_x,\sin\varphi_x\cos\theta_x,\sin\theta_x)$ with $\theta_x\in(0,\pi/2]$ and $\varphi_x\in[0,2\pi]$, now letting 
$$
  x_1-y_1=R\cos\beta\cos\alpha,~x_2-y_2=R\cos\beta\sin\alpha,~x_3+|y_3|=R\sin\beta
$$
with $\rho=R\cos\beta$ and $\alpha\in[0,2\pi]$, $\beta\in[0,\pi/2)$. In view of Theorem \ref{thm3.6}, from the uniform asymptotic expansion of the Hankel function \cite{GN95}, for $m=0,1,2$, 
$$
  H_m^{(1)}(z)=\sqrt{\frac{2}{\pi z}}e^{i(z-\frac{\pi}{2}m-\frac{\pi}{4})}\left(1+O(|z|^{-1})\right)~\text{as}~|z|\rightarrow\infty, 
$$
we see that the integral expression for $\wid G_{p,j},\mathbf{\wid G}_{s,j},U_{p,j}$ and $\mathbf{U}_{s,j}$ can be handled similarily as in the two-dimensional case. Following the lines in \cite[Section 2.3.4]{CP17} (see details from (2.61) to (2.74) in \cite{CP17}), analyzing analogously as in the two-dimensional case, we can obtain the corresponding asymptotic expansions. 
\begin{theorem}\label{thm3.7}
  For $a=p,s$, $\mathbf{y}\in\R^3_+\cup\R^3_-$ and $\mathbf{x}=r\mathbf{\hat x}=r(\hat x_1,\hat x_2,\hat x_3)=r(\cos\varphi_x\cos\theta_x,\sin\varphi_x\cos\theta_x,\sin\theta_x)$ with $\theta_x\in(0,\pi/2]$ and $\varphi_x\in[0,2\pi]$, \\
  1. if $f\in\{A_{p,3}^+,B_{p,3}^+,A_{s,1}^{(2)+},B_{s,1}^{(2)+},A_{s,2}^{(1)^+},B_{s,2}^{(1)+},R_{p,3},T_{p,3}\}$, $f=f(|\boldsymbol{\zeta}|^2,y_3)$,  then for 
  $$
    F_{1,+}(\mathbf{x},\mathbf{y}):=\frac{1}{(2\pi)^2}\int_{-\infty}^{+\infty}\int_{-\infty}^{+\infty}f(|\boldsymbol{\zeta}|^2,y_3)e^{-\beta_{a,+}x_3}e^{i\boldsymbol{\zeta}\cdot(\mathbf{x}'-\mathbf{y}')}d\zeta_1d\zeta_2
  $$
  we have the asymptotic behaviors 
  \begin{align*}
    F_{1,+}(\mathbf{x},\mathbf{y})&=\frac{e^{ik_{a,+}r}}{r}F_{1,+}^\infty(\hat{\mathbf{x}},\mathbf{y})+O(r^{-\frac{5}{4}}), \\ 
    \nabla_{\mathbf{y}}F_{1,+}(\mathbf{x},\mathbf{y})&=\frac{e^{ik_{a,+}r}}{r}\nabla_{\mathbf{y}}F_{1,+}^\infty(\hat{\mathbf{x}},\mathbf{y})+O(r^{-\frac{5}{4}}), 
  \end{align*}
  where 
  $$
    F_{1,+}^\infty(\hat{\mathbf{x}},\mathbf{y}):=e^{-i\frac{\pi}{2}}\frac{k_{a,+}}{2\pi}\hat x_3f(k_{a,+}^2\cos^2\theta_x,y_3)e^{-ik_{a,+}\hat{\mathbf{x}}\cdot\mathbf{y}'}, 
  $$
  2. if $f\in\{A_{p,1}^+,B_{p,1}^+,A_{s,2}^{(3)+},B_{s,2}^{(3)+},A_{s,3}^{(2)+},B_{s,3}^{(2)+},R_{p,1},T_{p,1},R_{s,3}^{(2)},T_{s,3}^{(2)}\}$, $f=f(|\boldsymbol{\zeta}|^2,y_3)$,  then for 
  $$
    F_{2,+}(\mathbf{x},\mathbf{y}):=\frac{1}{(2\pi)^2}\int_{-\infty}^{+\infty}\int_{-\infty}^{+\infty}f(|\boldsymbol{\zeta}|^2,y_3)e^{-\beta_{a,+}x_3}\zeta_1e^{i\boldsymbol{\zeta}\cdot(\mathbf{x}'-\mathbf{y}')}d\zeta_1d\zeta_2
  $$
  we have the asymptotic behaviors 
  \begin{align*}
  F_{2,+}(\mathbf{x},\mathbf{y})&=\frac{e^{ik_{a,+}r}}{r}F_{2,+}^\infty(\hat{\mathbf{x}},\mathbf{y})+O(r^{-\frac{5}{4}}), \\ 
  \nabla_{\mathbf{y}}F_{2,+}(\mathbf{x},\mathbf{y})&=\frac{e^{ik_{a,+}r}}{r}\nabla_{\mathbf{y}}F_{2,+}^\infty(\hat{\mathbf{x}},\mathbf{y})+O(r^{-\frac{5}{4}}), 
  \end{align*}
  where 
  $$
    F_{2,+}^\infty(\hat{\mathbf{x}},\mathbf{y}):=e^{-i\frac{\pi}{2}}\frac{k_{a,+}^2}{2\pi}\hat x_1\hat x_3f(k_{a,+}^2\cos^2\theta_x,y_3)e^{-ik_{a,+}\hat{\mathbf{x}}\cdot\mathbf{y}'}, 
  $$
  3. if $f\in\{A_{p,2}^+,B_{p,2}^+,A_{s,1}^{(3)+},B_{s,1}^{(3)+},A_{s,3}^{(1)+},B_{s,3}^{(1)+},R_{p,2},T_{p,2},R_{s,3}^{(1)},T_{s,3}^{(1)}\}$, $f=f(|\boldsymbol{\zeta}|^2,y_3)$,  then for 
  $$
  F_{3,+}(\mathbf{x},\mathbf{y}):=\frac{1}{(2\pi)^2}\int_{-\infty}^{+\infty}\int_{-\infty}^{+\infty}f(|\boldsymbol{\zeta}|^2,y_3)e^{-\beta_{a,+}x_3}\zeta_2e^{i\boldsymbol{\zeta}\cdot(\mathbf{x}'-\mathbf{y}')}d\zeta_1d\zeta_2
  $$
  we have the asymptotic behaviors 
  \begin{align*}
  F_{3,+}(\mathbf{x},\mathbf{y})&=\frac{e^{ik_{a,+}r}}{r}F_{3,+}^\infty(\hat{\mathbf{x}},\mathbf{y})+O(r^{-\frac{5}{4}}), \\ 
  \nabla_{\mathbf{y}}F_{3,+}(\mathbf{x},\mathbf{y})&=\frac{e^{ik_{a,+}r}}{r}\nabla_{\mathbf{y}}F_{3,+}^\infty(\hat{\mathbf{x}},\mathbf{y})+O(r^{-\frac{5}{4}}), 
  \end{align*}
  where 
  $$
  F_{3,+}^\infty(\hat{\mathbf{x}},\mathbf{y}):=e^{-i\frac{\pi}{2}}\frac{k_{a,+}^2}{2\pi}\hat x_2\hat x_3f(k_{a,+}^2\cos^2\theta_x,y_3)e^{-ik_{a,+}\hat{\mathbf{x}}\cdot\mathbf{y}'}, 
  $$
  4. if $f\in\{A_{s,1}^{(1)},B_{s,1}^{(1)},A_{s,2}^{(2)},B_{s,2}^{(2)},R_{s,1}^{(1)},T_{s,1}^{(1)},R_{s,2}^{(2)},T_{s,2}^{(2)}\}$, $f=f(|\boldsymbol{\zeta}|^2,y_3)$,  then for 
  $$
  F_{4,+}(\mathbf{x},\mathbf{y}):=\frac{1}{(2\pi)^2}\int_{-\infty}^{+\infty}\int_{-\infty}^{+\infty}f(|\boldsymbol{\zeta}|^2,y_3)e^{-\beta_{a,+}x_3}\zeta_1\zeta_2e^{i\boldsymbol{\zeta}\cdot(\mathbf{x}'-\mathbf{y}')}d\zeta_1d\zeta_2
  $$
  we have the asymptotic behaviors 
  \begin{align*}
  F_{4,+}(\mathbf{x},\mathbf{y})&=\frac{e^{ik_{a,+}r}}{r}F_{4,+}^\infty(\hat{\mathbf{x}},\mathbf{y})+O(r^{-\frac{5}{4}}), \\ 
  \nabla_{\mathbf{y}}F_{4,+}(\mathbf{x},\mathbf{y})&=\frac{e^{ik_{a,+}r}}{r}\nabla_{\mathbf{y}}F_{4,+}^\infty(\hat{\mathbf{x}},\mathbf{y})+O(r^{-\frac{5}{4}}), 
  \end{align*}
  where 
  $$
  F_{4,+}^\infty(\hat{\mathbf{x}},\mathbf{y}):=e^{-i\frac{\pi}{2}}\frac{k_{a,+}^3}{2\pi}\hat x_1\hat x_2\hat x_3f(k_{a,+}^2\cos^2\theta_x,y_3)e^{-ik_{a,+}\hat{\mathbf{x}}\cdot\mathbf{y}'}, 
  $$
  5. if $f\in\{\hat A_{s,1}^{(2)},\hat B_{s,1}^{(2)},R_{s,1}^{(2)},T_{s,1}^{(2)}\}$, $f=f(|\boldsymbol{\zeta}|^2,y_3)$,  then for 
  $$
  F_{5,+}(\mathbf{x},\mathbf{y}):=\frac{1}{(2\pi)^2}\int_{-\infty}^{+\infty}\int_{-\infty}^{+\infty}f(|\boldsymbol{\zeta}|^2,y_3)e^{-\beta_{a,+}x_3}\zeta_1^2e^{i\boldsymbol{\zeta}\cdot(\mathbf{x}'-\mathbf{y}')}d\zeta_1d\zeta_2
  $$
  we have the asymptotic behaviors 
  \begin{align*}
  F_{5,+}(\mathbf{x},\mathbf{y})&=\frac{e^{ik_{a,+}r}}{r}F_{5,+}^\infty(\hat{\mathbf{x}},\mathbf{y})+O(r^{-\frac{5}{4}}), \\ 
  \nabla_{\mathbf{y}}F_{5,+}(\mathbf{x},\mathbf{y})&=\frac{e^{ik_{a,+}r}}{r}\nabla_{\mathbf{y}}F_{5,+}^\infty(\hat{\mathbf{x}},\mathbf{y})+O(r^{-\frac{5}{4}}), 
  \end{align*}
  where 
  $$
  F_{5,+}^\infty(\hat{\mathbf{x}},\mathbf{y}):=e^{-i\frac{\pi}{2}}\frac{k_{a,+}^3}{2\pi}\hat x_1^2\hat x_3f(k_{a,+}^2\cos^2\theta_x,y_3)e^{-ik_{a,+}\hat{\mathbf{x}}\cdot\mathbf{y}'}, 
  $$
  6. if $f\in\{\hat A_{s,2}^{(1)},\hat B_{s,2}^{(1)},R_{s,2}^{(1)},T_{s,2}^{(1)}\}$, $f=f(|\boldsymbol{\zeta}|^2,y_3)$, then for 
  $$
  F_{6,+}(\mathbf{x},\mathbf{y}):=\frac{1}{(2\pi)^2}\int_{-\infty}^{+\infty}\int_{-\infty}^{+\infty}f(|\boldsymbol{\zeta}|^2,y_3)e^{-\beta_{a,+}x_3}\zeta_2^2e^{i\boldsymbol{\zeta}\cdot(\mathbf{x}'-\mathbf{y}')}d\zeta_1d\zeta_2
  $$
  we have the asymptotic behaviors 
  \begin{align*}
  F_{6,+}(\mathbf{x},\mathbf{y})&=\frac{e^{ik_{a,+}r}}{r}F_{6,+}^\infty(\hat{\mathbf{x}},\mathbf{y})+O(r^{-\frac{5}{4}}), \\ 
  \nabla_{\mathbf{y}}F_{6,+}(\mathbf{x},\mathbf{y})&=\frac{e^{ik_{a,+}r}}{r}\nabla_{\mathbf{y}}F_{6,+}^\infty(\hat{\mathbf{x}},\mathbf{y})+O(r^{-\frac{5}{4}}), 
  \end{align*}
  where 
  $$
  F_{6,+}^\infty(\hat{\mathbf{x}},\mathbf{y}):=e^{-i\frac{\pi}{2}}\frac{k_{a,+}^3}{2\pi}\hat x_2^2\hat x_3f(k_{a,+}^2\cos^2\theta_x,y_3)e^{-ik_{a,+}\hat{\mathbf{x}}\cdot\mathbf{y}'}. 
  $$
\end{theorem}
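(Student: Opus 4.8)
The plan is to reduce each two-dimensional Fourier integral $F_{k,+}$, $k=1,\dots,6$, to a one-dimensional contour integral over $\mathcal{C}$ by means of Theorem~\ref{thm3.6}, and then to extract its leading order as $r\to\infty$ by the steepest descent method, repeating the two-dimensional argument of Section~\ref{sec3.1} (which itself follows \cite[Section 2.3.4]{CP17}), with the bookkeeping of the constants and of the $|y_3|$-dependence adapted to three dimensions.

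First I would apply Theorem~\ref{thm3.6} to each integrand, with $f$ replaced by $f(|\boldsymbol{\zeta}|^2,y_3)e^{-\beta_{a,+}x_3}$ and the relevant monomial in $\zeta_1,\zeta_2$: this turns $F_{k,+}$ into a Hankel-type contour integral over $\mathcal{C}$ of the form $\int_{\mathcal{C}}f(\xi^2,y_3)e^{-\beta_{a,+}x_3}H_m^{(1)}(\xi\rho)\,\xi^{m+1}\,d\xi$ ($m=0,1,2$, with an additional $H_0^{(1)}$-term in the $\zeta_1^2$ and $\zeta_2^2$ cases), multiplied by the trigonometric prefactor in $\alpha$ given by Theorem~\ref{thm3.6}. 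I would then insert the uniform asymptotics $H_m^{(1)}(z)=\sqrt{2/(\pi z)}\,e^{i(z-m\pi/2-\pi/4)}(1+O(|z|^{-1}))$, substitute $\xi=k_{a,+}\eta$, and pass to the coordinates $\rho=R\cos\beta$, $x_3+|y_3|=R\sin\beta$; the $R$-dependent exponential then becomes $e^{\beta_{a,+}|y_3|}e^{R\phi(\eta)}$ with exactly the phase $\phi(\eta)=ik_{a,+}(\eta\cos\beta+i\sqrt{\eta^2-1}\sin\beta)$ already analyzed in Section~\ref{sec3.1}, so the saddle point is $\eta_0=\cos\beta$ with $\phi(\eta_0)=ik_{a,+}$, $\phi''(\eta_0)=-ik_{a,+}/\sin^2\beta$, and the path of steepest descent $\mathcal{D}$ is that of Figure~\ref{fig3.3}.

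Next I would carry out, verbatim from Section~\ref{sec3.1}, the analysis of deforming $\mathcal{C}$ onto $\mathcal{D}$, keeping track of the three possible contributions: (i) residues at zeros of $D$ lying between $\mathcal{C}$ and $\mathcal{D}$, which are exponentially small relative to $e^{ik_{a,+}R}$ since $\Rt\phi$ is non-increasing along the path joining $\eta_0$ to such a zero by \cite[Section 7.2]{NB84}, as after \eqref{3.7}; (ii) local deformations around the branch cuts issuing from $\eta=k/k_{a,+}$, $k\in\{k_{p,\pm},k_{s,\pm}\}\setminus\{k_{a,+}\}$, where the integrand has a jump $O((\eta-k/k_{a,+})^{1/2})$ and formula $(A.4)$ of \cite{CP17} gives a contribution $O(R^{-3/2})$, improving to $O(R^{-5/4})$ in the borderline case when a cut meets $\eta_0$; and (iii) the saddle point contribution, computed by $(A.2)$ or $(A.3)$ of \cite{CP17} after checking that the amplitude near $\eta_0$ is of the form $p(\eta)+q(\eta)(\eta-\eta_0)^{1/2}$ with $p,q$ analytic, giving $e^{-i\pi/4}\sqrt{2\pi/(R|\phi''(\eta_0)|)}\,e^{ik_{a,+}R}$ times the amplitude at $\eta_0$, with remainder $O(R^{-5/4})$ in the worst case. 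Combining the Hankel prefactor, the power $k_{a,+}^{m+1}$, the Gaussian factor $\sqrt{2\pi}\,\sin\beta/\sqrt{Rk_{a,+}}$ (whose $\sin\beta$ becomes $\hat x_3$), the $\alpha$-prefactor of Theorem~\ref{thm3.6} and the value at $\eta_0$ of the monomial (in the $\zeta_1^2,\zeta_2^2$ cases the $H_0^{(1)}$- and $H_2^{(1)}$-pieces recombine through $\frac12(1\mp\cos2\alpha)$ into $\cos^2\alpha$, resp. $\sin^2\alpha$), so that $\xi^{m+1}$ and the trigonometric prefactor produce $k_{a,+}^{m+1}$ times the $\hat{\mathbf{x}}$-monomial appearing in $F_{k,+}^\infty$, and using $R=r-\hat{\mathbf{x}}\cdot\mathbf{y}'+\hat x_3|y_3|+O(r^{-1})$ together with $\beta_{a,+}(k_{a,+}\cos\theta_x)=-ik_{a,+}\sin\theta_x$, $\beta\to\theta_x$, $\alpha\to\varphi_x$, one sees that $e^{\beta_{a,+}(\eta_0)|y_3|}$ and the $|y_3|$-part of $e^{ik_{a,+}R}$ cancel, leaving precisely $e^{ik_{a,+}r}r^{-1}F_{k,+}^\infty(\hat{\mathbf{x}},\mathbf{y})$ with remainder $O(r^{-5/4})$. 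The expansion of $\nabla_{\mathbf{y}}F_{k,+}$ follows identically, since $\partial_{y_\ell}$ only inserts a bounded factor ($-i\zeta_\ell$, or $\pm\beta_{a,\pm}$ acting on the $y_3$-exponential inside $f$) into the amplitude, and all the above estimates are uniform in $\mathbf{y}$ on compact subsets of $\R^3_+\cup\R^3_-$.

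The main difficulty I anticipate is computational rather than conceptual: one must verify, case by case and for each admissible $f$ (including the ``transmitted'' ones, in which the $y_3$-exponential carries $\beta_{a,-}$ instead of $\beta_{a,+}$), that after the substitution $\xi=k_{a,+}\eta$ the integrand has near each relevant saddle/branch point the local structure $p(\eta)+q(\eta)(\eta-\eta_0)^{1/2}$ needed to apply $(A.2)$--$(A.4)$ of \cite{CP17}, and that the various constants---from Theorem~\ref{thm3.6}, the Hankel prefactors, the Gaussian factor, and the discrepancy $R-r$---combine into exactly the compact formulas $F_{k,+}^\infty$; but no new idea beyond the two-dimensional argument is required.
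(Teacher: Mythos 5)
Your proposal is correct and follows essentially the same route as the paper: reduce each $F_{k,+}$ to a Hankel-transform contour integral via Theorem \ref{thm3.6}, insert the uniform large-argument asymptotics of $H_m^{(1)}$, pass to the coordinates $\rho=R\cos\beta$, $x_3+|y_3|=R\sin\beta$, and rerun the two-dimensional steepest-descent analysis of Section \ref{sec3.1} (pole, branch-cut, and saddle-point contributions) to obtain the stated far-field patterns and $O(r^{-5/4})$ remainders. Your write-up is in fact more explicit than the paper's (which defers almost entirely to the 2D case and \cite[Section 2.3.4]{CP17}), notably in tracking the cancellation of the $|y_3|$-exponentials and the recombination of the $H_0^{(1)}$ and $H_2^{(1)}$ pieces.
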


If $\theta_x\in[-\pi/2,0)$, it follows similarily that: 
\begin{theorem}\label{thm3.8}
	For $a=p,s$, $\mathbf{y}\in\R^3_+\cup\R^3_-$ and $\mathbf{x}=r\mathbf{\hat x}=r(\hat x_1,\hat x_2,\hat x_3)=r(\cos\varphi_x\cos\theta_x,\sin\varphi_x\cos\theta_x,\sin\theta_x)$ with $\theta_x\in[-\pi/2,0)$ and $\varphi_x\in[0,2\pi]$, \\ 
	1. if $f\in\{A_{p,3}^-,B_{p,3}^-,A_{s,1}^{(2)-},B_{s,1}^{(2)-},A_{s,2}^{(1)^-},B_{s,2}^{(1)-},R_{p,3},T_{p,3}\}$, $f=f(|\boldsymbol{\zeta}|^2,y_3)$,  then for 
	$$
	F_{1,-}(\mathbf{x},\mathbf{y}):=\frac{1}{(2\pi)^2}\int_{-\infty}^{+\infty}\int_{-\infty}^{+\infty}f(|\boldsymbol{\zeta}|^2,y_3)e^{\beta_{a,-}x_3}e^{i\boldsymbol{\zeta}\cdot(\mathbf{x}'-\mathbf{y}')}d\zeta_1d\zeta_2
	$$
	we have the asymptotic behaviors 
	\begin{align*}
	F_{1,-}(\mathbf{x},\mathbf{y})&=\frac{e^{ik_{a,-}r}}{r}F_{1,-}^\infty(\hat{\mathbf{x}},\mathbf{y})+O(r^{-\frac{5}{4}}), \\ 
	\nabla_{\mathbf{y}}F_{1,-}(\mathbf{x},\mathbf{y})&=\frac{e^{ik_{a,-}r}}{r}\nabla_{\mathbf{y}}F_{1,-}^\infty(\hat{\mathbf{x}},\mathbf{y})+O(r^{-\frac{5}{4}}), 
	\end{align*}
	where 
	$$
	F_{1,-}^\infty(\hat{\mathbf{x}},\mathbf{y}):=e^{i\frac{\pi}{2}}\frac{k_{a,-}}{2\pi}\hat x_3f(k_{a,-}^2\cos^2\theta_x,y_3)e^{-ik_{a,-}\hat{\mathbf{x}}\cdot\mathbf{y}'}, 
	$$
	2. if $f\in\{A_{p,1}^-,B_{p,1}^-,A_{s,2}^{(3)-},B_{s,2}^{(3)-},A_{s,3}^{(2)-},B_{s,3}^{(2)-},R_{p,1},T_{p,1},R_{s,3}^{(2)},T_{s,3}^{(2)}\}$, $f=f(|\boldsymbol{\zeta}|^2,y_3)$,  then for 
	$$
	F_{2,-}(\mathbf{x},\mathbf{y}):=\frac{1}{(2\pi)^2}\int_{-\infty}^{+\infty}\int_{-\infty}^{+\infty}f(|\boldsymbol{\zeta}|^2,y_3)e^{\beta_{a,-}x_3}\zeta_1e^{i\boldsymbol{\zeta}\cdot(\mathbf{x}'-\mathbf{y}')}d\zeta_1d\zeta_2
	$$
	we have the asymptotic behaviors 
	\begin{align*}
	F_{2,-}(\mathbf{x},\mathbf{y})&=\frac{e^{ik_{a,-}r}}{r}F_{2,-}^\infty(\hat{\mathbf{x}},\mathbf{y})+O(r^{-\frac{5}{4}}), \\ 
	\nabla_{\mathbf{y}}F_{2,-}(\mathbf{x},\mathbf{y})&=\frac{e^{ik_{a,-}r}}{r}\nabla_{\mathbf{y}}F_{2,-}^\infty(\hat{\mathbf{x}},\mathbf{y})+O(r^{-\frac{5}{4}}), 
	\end{align*}
	where 
	$$
	F_{2,-}^\infty(\hat{\mathbf{x}},\mathbf{y}):=e^{i\frac{\pi}{2}}\frac{k_{a,-}^2}{2\pi}\hat x_1\hat x_3f(k_{a,-}^2\cos^2\theta_x,y_3)e^{-ik_{a,-}\hat{\mathbf{x}}\cdot\mathbf{y}'}, 
	$$
	3. if $f\in\{A_{p,2}^-,B_{p,2}^-,A_{s,1}^{(3)-},B_{s,1}^{(3)-},A_{s,3}^{(1)-},B_{s,3}^{(1)-},R_{p,2},T_{p,2},R_{s,3}^{(1)},T_{s,3}^{(1)}\}$, $f=f(|\boldsymbol{\zeta}|^2,y_3)$,  then for 
	$$
	F_{3,-}(\mathbf{x},\mathbf{y}):=\frac{1}{(2\pi)^2}\int_{-\infty}^{+\infty}\int_{-\infty}^{+\infty}f(|\boldsymbol{\zeta}|^2,y_3)e^{\beta_{a,-}x_3}\zeta_2e^{i\boldsymbol{\zeta}\cdot(\mathbf{x}'-\mathbf{y}')}d\zeta_1d\zeta_2
	$$
	we have the asymptotic behaviors 
	\begin{align*}
	F_{3,-}(\mathbf{x},\mathbf{y})&=\frac{e^{ik_{a,-}r}}{r}F_{3,-}^\infty(\hat{\mathbf{x}},\mathbf{y})+O(r^{-\frac{5}{4}}), \\ 
	\nabla_{\mathbf{y}}F_{3,-}(\mathbf{x},\mathbf{y})&=\frac{e^{ik_{a,-}r}}{r}\nabla_{\mathbf{y}}F_{3,-}^\infty(\hat{\mathbf{x}},\mathbf{y})+O(r^{-\frac{5}{4}}), 
	\end{align*}
	where 
	$$
	F_{3,-}^\infty(\hat{\mathbf{x}},\mathbf{y}):=e^{i\frac{\pi}{2}}\frac{k_{a,-}^2}{2\pi}\hat x_2\hat x_3f(k_{a,-}^2\cos^2\theta_x,y_3)e^{-ik_{a,-}\hat{\mathbf{x}}\cdot\mathbf{y}'}, 
	$$
	4. if $f\in\{A_{s,1}^{(1)},B_{s,1}^{(1)},A_{s,2}^{(2)},B_{s,2}^{(2)},R_{s,1}^{(1)},T_{s,1}^{(1)},R_{s,2}^{(2)},T_{s,2}^{(2)}\}$, $f=f(|\boldsymbol{\zeta}|^2,y_3)$,  then for 
	$$
	F_{4,-}(\mathbf{x},\mathbf{y}):=\frac{1}{(2\pi)^2}\int_{-\infty}^{+\infty}\int_{-\infty}^{+\infty}f(|\boldsymbol{\zeta}|^2,y_3)e^{\beta_{a,-}x_3}\zeta_1\zeta_2e^{i\boldsymbol{\zeta}\cdot(\mathbf{x}'-\mathbf{y}')}d\zeta_1d\zeta_2
	$$
	we have the asymptotic behaviors 
	\begin{align*}
	F_{4,-}(\mathbf{x},\mathbf{y})&=\frac{e^{ik_{a,-}r}}{r}F_{4,-}^\infty(\hat{\mathbf{x}},\mathbf{y})+O(r^{-\frac{5}{4}}), \\ 
	\nabla_{\mathbf{y}}F_{4,-}(\mathbf{x},\mathbf{y})&=\frac{e^{ik_{a,-}r}}{r}\nabla_{\mathbf{y}}F_{4,-}^\infty(\hat{\mathbf{x}},\mathbf{y})+O(r^{-\frac{5}{4}}), 
	\end{align*}
	where 
	$$
	F_{4,-}^\infty(\hat{\mathbf{x}},\mathbf{y}):=e^{i\frac{\pi}{2}}\frac{k_{a,-}^3}{2\pi}\hat x_1\hat x_2\hat x_3f(k_{a,-}^2\cos^2\theta_x,y_3)e^{-ik_{a,-}\hat{\mathbf{x}}\cdot\mathbf{y}'}, 
	$$
	5. if $f\in\{\hat A_{s,1}^{(2)},\hat B_{s,1}^{(2)},R_{s,1}^{(2)},T_{s,1}^{(2)}\}$, $f=f(|\boldsymbol{\zeta}|^2,y_3)$,  then for 
	$$
	F_{5,-}(\mathbf{x},\mathbf{y}):=\frac{1}{(2\pi)^2}\int_{-\infty}^{+\infty}\int_{-\infty}^{+\infty}f(|\boldsymbol{\zeta}|^2,y_3)e^{\beta_{a,-}x_3}\zeta_1^2e^{i\boldsymbol{\zeta}\cdot(\mathbf{x}'-\mathbf{y}')}d\zeta_1d\zeta_2
	$$
	we have the asymptotic behaviors 
	\begin{align*}
	F_{5,-}(\mathbf{x},\mathbf{y})&=\frac{e^{ik_{a,-}r}}{r}F_{5,-}^\infty(\hat{\mathbf{x}},\mathbf{y})+O(r^{-\frac{5}{4}}), \\ 
	\nabla_{\mathbf{y}}F_{5,-}(\mathbf{x},\mathbf{y})&=\frac{e^{ik_{a,-}r}}{r}\nabla_{\mathbf{y}}F_{5,-}^\infty(\hat{\mathbf{x}},\mathbf{y})+O(r^{-\frac{5}{4}}), 
	\end{align*}
	where 
	$$
	F_{5,-}^\infty(\hat{\mathbf{x}},\mathbf{y}):=e^{i\frac{\pi}{2}}\frac{k_{a,-}^3}{2\pi}\hat x_1^2\hat x_3f(k_{a,-}^2\cos^2\theta_x,y_3)e^{-ik_{a,-}\hat{\mathbf{x}}\cdot\mathbf{y}'}, 
	$$
	6. if $f\in\{\hat A_{s,2}^{(1)},\hat B_{s,2}^{(1)},R_{s,2}^{(1)},T_{s,2}^{(1)}\}$, $f=f(|\boldsymbol{\zeta}|^2,y_3)$, then for 
	$$
	F_{6,-}(\mathbf{x},\mathbf{y}):=\frac{1}{(2\pi)^2}\int_{-\infty}^{+\infty}\int_{-\infty}^{+\infty}f(|\boldsymbol{\zeta}|^2,y_3)e^{\beta_{a,-}x_3}\zeta_2^2e^{i\boldsymbol{\zeta}\cdot(\mathbf{x}'-\mathbf{y}')}d\zeta_1d\zeta_2
	$$
	we have the asymptotic behaviors 
	\begin{align*}
	F_{6,-}(\mathbf{x},\mathbf{y})&=\frac{e^{ik_{a,-}r}}{r}F_{6,-}^\infty(\hat{\mathbf{x}},\mathbf{y})+O(r^{-\frac{5}{4}}), \\ 
	\nabla_{\mathbf{y}}F_{6,-}(\mathbf{x},\mathbf{y})&=\frac{e^{ik_{a,-}r}}{r}\nabla_{\mathbf{y}}F_{6,-}^\infty(\hat{\mathbf{x}},\mathbf{y})+O(r^{-\frac{5}{4}}), 
	\end{align*}
	where 
	$$
	F_{6,-}^\infty(\hat{\mathbf{x}},\mathbf{y}):=e^{i\frac{\pi}{2}}\frac{k_{a,-}^3}{2\pi}\hat x_2^2\hat x_3f(k_{a,-}^2\cos^2\theta_x,y_3)e^{-ik_{a,-}\hat{\mathbf{x}}\cdot\mathbf{y}'}. 
	$$
\end{theorem}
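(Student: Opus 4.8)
The plan is to establish Theorem~\ref{thm3.8} by running the steepest-descent argument that proves Theorem~\ref{thm3.7}, with the sign modifications forced by the fact that $\mathbf{x}$ now lies in the lower half-space $\R^3_-$; this is precisely the three-dimensional counterpart of the passage from Theorem~\ref{thm3.2} to Theorem~\ref{thm3.3} in the two-dimensional case. For each of the six families of amplitudes $f=f(|\boldsymbol{\zeta}|^2,y_3)$ listed in the statement, I would first apply Theorem~\ref{thm3.6} to rewrite the corresponding double Fourier integral $F_{i,-}(\mathbf{x},\mathbf{y})$ as a one-dimensional integral over the path $\mathcal{C}$ of Figure~3.3 involving $H_0^{(1)},H_1^{(1)},H_2^{(1)}$ together with the angular prefactors in $\alpha$; these prefactors are what produce the $\hat x_1,\hat x_2$ dependence of $F_{i,-}^\infty$ through $\cos\alpha$, $\sin\alpha$, $\cos2\alpha$, $\sin2\alpha$. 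Inserting the uniform large-argument expansion $H_m^{(1)}(z)=\sqrt{2/(\pi z)}\,e^{i(z-\pi m/2-\pi/4)}(1+O(|z|^{-1}))$ from \cite{GN95} and expressing $\mathbf{x}'-\mathbf{y}'$ and $x_3+|y_3|$ in the spherical-type coordinates $(R,\alpha,\beta)$ with $R\sim r$, each $F_{i,-}$ becomes an oscillatory integral with large parameter carrying the factor $e^{\beta_{a,-}x_3}$, $x_3<0$, so the relevant wave number at infinity is $k_{a,-}$ rather than $k_{a,+}$.

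I would then carry out the steepest-descent analysis exactly as in Section~\ref{sec3.1} and \cite[Section~2.3.4]{CP17}: the phase has the single saddle point $\eta_0=\cos\theta_x$, the contour $\mathcal{C}$ is deformed to the path of steepest descent $\mathcal{D}$ of Figure~3.3, and the asymptotics is assembled from the residues at the poles of the amplitude (zeros of $D$ between $\mathcal{C}$ and $\mathcal{D}$), the saddle-point contribution, and the local deformations of $\mathcal{D}$ around the branch cuts of $\beta_{a,\pm}$. As in two dimensions one verifies, using \cite[Section~7.2]{NB84}, that $\Rt\phi(\widetilde\eta)<\Rt\phi(\eta_0)$ at every relevant pole $\widetilde\eta$, so the residue terms decay exponentially; and since across each branch point the jump of every amplitude occurring in the six cases is of square-root type $C(\eta-k/k_{a,-})^{1/2}$, the branch-cut contributions, evaluated by $(A.4)$ of \cite{CP17}, decay faster than $r^{-5/4}$ and are hence absorbed in the stated remainder. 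The one genuine difference from Theorem~\ref{thm3.7} is that, because $\sin\theta_x<0$ for $\mathbf{x}\in\R^3_-$, the direction of steepest descent through $\eta_0$ is reversed, which replaces the overall saddle-point phase factor $e^{-i\pi/2}$ by $e^{i\pi/2}$ --- the exact 3D analogue of the change from $e^{-i\pi/4}$ in Theorem~\ref{thm3.2} to $e^{i3\pi/4}$ in Theorem~\ref{thm3.3}.

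Evaluating the saddle-point term by $(A.2)$ or $(A.3)$ of \cite{CP17} then gives the leading coefficient: $|\boldsymbol{\zeta}|^2$ is frozen at $k_{a,-}^2\cos^2\theta_x$, the angular integration produces $\hat x_1,\hat x_2$ or their quadratic combinations, $e^{\beta_{a,-}x_3}$ together with the Jacobian produces the factor $\hat x_3$, and $e^{i\boldsymbol{\zeta}\cdot(\mathbf{x}'-\mathbf{y}')}$ evaluated at the saddle produces $e^{-ik_{a,-}\hat{\mathbf{x}}\cdot\mathbf{y}'}$; collecting these reproduces the claimed formulas for $F_{i,-}^\infty$ in cases 1--6. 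For $\nabla_{\mathbf{y}}F_{i,-}$ I would differentiate under the integral sign, which is legitimate because $\nabla_{\mathbf{y}}$ only brings down bounded factors ($i\boldsymbol{\zeta}$ from $e^{-i\boldsymbol{\zeta}\cdot\mathbf{y}'}$ and $\pm\beta_{a,-}$ from $e^{\pm\beta_{a,-}y_3}$, neither of which alters the square-root structure at the branch points), and then rerun the same argument to obtain the companion asymptotics with the identical error order. The main obstacle, as in the two-dimensional case, is the bookkeeping in the contour deformation: checking case by case that each pole lying between $\mathcal{C}$ and $\mathcal{D}$ is reachable from $\eta_0$ along a path on which $\Rt\phi$ is non-increasing, and that the branch-cut jumps of all the explicit amplitudes $R_{p,j},T_{p,j},R_{s,j}^{(l)},T_{s,j}^{(l)}$ and their $A,B,\hat A,\hat B$ counterparts are genuinely $O((\eta-k/k_{a,-})^{1/2})$, so that only the saddle point contributes at order $r^{-1}$.
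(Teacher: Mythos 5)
Your proposal follows essentially the same route as the paper, which proves Theorem~\ref{thm3.8} simply by remarking that it "follows similarly" to Theorem~\ref{thm3.7}: reduce the double Fourier integral to a one-dimensional contour integral via the Hankel-transform identities of Theorem~\ref{thm3.6}, insert the large-argument expansion of $H_m^{(1)}$, and rerun the two-dimensional steepest-descent analysis of Section~\ref{sec3.1} (saddle point, exponentially decaying pole residues, $O(r^{-3/2})$ branch-cut contributions), with the sign of $\hat x_3$ accounting for the phase factor $e^{i\pi/2}$ in place of $e^{-i\pi/2}$. Your write-up is in fact more detailed than the paper's own treatment.
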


From Theorems \ref{thm3.7} and \ref{thm3.8}, we see that all the $\wid G_{p,j},\mathbf{\wid G}_{s,j},U_{p,j}$ and $\mathbf{U}_{s,j}$ satisfy the radiation condition \eqref{1.3b}. Therefore, we obtain the existence of the Green's tensor. 
\begin{theorem}\label{thm3.9}
	The two-layered Green's tensor $\mathbf{G}(\mathbf{x},\mathbf{y})$ defined by \eqref{3.9} exists and is unique. Moreover, it has the explicit form $\mathbf{G}(\mathbf{x},\mathbf{y})=(\mathbf{G}_1(\mathbf{x},\mathbf{y}),\mathbf{G}_2(\mathbf{x},\mathbf{y}),$ $\mathbf{G}_3(\mathbf{x},\mathbf{y}))$ with the column vector $\mathbf{G}_j=-k_p^{-2}\grad_{\mathbf{x}}G_{p,j}+k_s^{-2}\curl_{\mathbf{x}}\mathbf{G}_{s,j}$, $j=1,2,3$, where $G_{p,j}=\wid G_{p,j}+U_{p,j}$ and $\mathbf{G}_{s,j}=\mathbf{\wid G}_{s,j}+\mathbf{U}_{s,j}$. 
\end{theorem}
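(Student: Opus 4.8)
The plan is to obtain uniqueness directly from Theorem~\ref{thm2.4} and to establish existence by collecting the explicit ingredients constructed above and checking that their assembly solves \eqref{3.9}. Uniqueness is immediate: the difference of two solutions of \eqref{3.9} has its Dirac source cancelled, hence is smooth across $\mathbf{y}$ and solves the homogeneous Navier system \eqref{1.1} in $\R^3_\pm$ with the transmission conditions of \eqref{1.2} ($\mathbf{u}^{in}=0$, $a_0=1$) and the radiation condition \eqref{1.3b}, so Theorem~\ref{thm2.4} forces it to vanish.

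For existence, the first step is to justify that \eqref{3.9} is equivalent to the decoupled scalar/Maxwell system \eqref{3.10}. Writing $G_{p,j}:=\dive_{\mathbf{x}}\mathbf{G}_j$ and $\mathbf{G}_{s,j}:=\curl_{\mathbf{x}}\mathbf{G}_j$, applying $\dive_{\mathbf{x}}$ and $\curl_{\mathbf{x}}$ to the Navier equation shows that $(G_{p,j},\mathbf{G}_{s,j})$ satisfies the first two equations of \eqref{3.10}, while Lemma~\ref{lem2.1} and Corollary~\ref{cor2.3} translate the transmission conditions $[\mathbf{G}]=[\mathbf{P}^{(\mathbf{x})}_{\tilde\mu,\tilde\lambda}\mathbf{G}]=0$ into the four jump conditions on $\Gamma_0$; conversely, given a solution of \eqref{3.10} one must check that the Helmholtz superposition $\mathbf{G}_j:=-k_p^{-2}\grad_{\mathbf{x}}G_{p,j}+k_s^{-2}\curl_{\mathbf{x}}\mathbf{G}_{s,j}$ reproduces the matrix Dirac singularity $-\delta(\mathbf{x},\mathbf{y})\mathbf{I}$ and recovers \eqref{1.2}--\eqref{1.3b}. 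As already noted, this makes the construction independent of the choice of $\tilde\mu,\tilde\lambda$.

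The second step exhibits the solution of \eqref{3.10} in the split form $G_{p,j}=\wid G_{p,j}+U_{p,j}$, $\mathbf{G}_{s,j}=\mathbf{\wid G}_{s,j}+\mathbf{U}_{s,j}$. The auxiliary fields $\wid G_{p,j},\mathbf{\wid G}_{s,j}$ are the prescribed derivatives of the two-layered acoustic and Maxwell Green's functions, whose existence, regularity and Sommerfeld/Silver--M\"uller behavior are available from \cite{HED05,CP17,LJ24,NB84}, and solve \eqref{3.11}. The corrections $U_{p,j},\mathbf{U}_{s,j}$ then solve a transmission problem with homogeneous Helmholtz/Maxwell equations in $\R^3_\pm$, radiation conditions, and boundary data $f_j,\mathbf{F}_j$ built from $\wid G$; the partial Fourier transform in $\mathbf{x}'$ reduces each to a linear algebraic system whose explicit solution is the one recorded above. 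Since the denominator $D$ has no real zeros and the integrands decay, these Fourier integrals define functions smooth in $\R^3_\pm$, and the equations and the first jump conditions of \eqref{3.10} hold by construction, the data $f_j,\mathbf{F}_j$ being chosen precisely to cancel the mismatch of $\wid G$ in the mixed conditions. The only remaining point for \eqref{3.10} is the radiation condition: rewriting the double integrals as Hankel transforms by Theorem~\ref{thm3.6} and applying the steepest-descent analysis of Theorems~\ref{thm3.7} and \ref{thm3.8} to every elementary integral $F_{\ell,\pm}$ and to $\nabla_{\mathbf{y}}F_{\ell,\pm}$, together with the known asymptotics of $\wid G$, shows that $G_{p,j}$ obeys the Sommerfeld condition with wave number $k_{p,\pm}$ and $\mathbf{G}_{s,j}$ the Silver--M\"uller condition with $k_{s,\pm}$, uniformly in $\hat{\mathbf{x}}\in\Sp^2_\pm$. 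Reassembling $\mathbf{G}_j$ and running the equivalence of step one in reverse then yields a solution of \eqref{3.9}, which with uniqueness proves the theorem.

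I expect the step requiring the most care to be the backward direction of the equivalence in step one: confirming that the Helmholtz superposition of the scalar and vectorial potentials reconstructs the full matrix Dirac source $-\delta(\mathbf{x},\mathbf{y})\mathbf{I}$ — which amounts to recognizing the free-space parts of $G_{p,j}$ and $\mathbf{G}_{s,j}$ as the Helmholtz decomposition of the elastic Kupradze fundamental tensor in $\R^3_\pm$ — and that the mixed transmission conditions of \eqref{3.9} genuinely follow from those of \eqref{3.10}, i.e.\ that the reduction via Lemma~\ref{lem2.1} and Corollary~\ref{cor2.3} is a two-way equivalence rather than a one-way implication. The accompanying bookkeeping is also delicate: one has to verify convergence of all the Fourier integrals for $U_{p,j},\mathbf{U}_{s,j}$ and their $\mathbf{y}$-derivatives, and check that the branch-point behavior of $\beta_{a,\pm}$ near $|\xi|=k_{a,\pm}$ is compatible with the contour deformations underlying Theorems~\ref{thm3.7}--\ref{thm3.8}.
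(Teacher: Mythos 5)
Your proposal is correct and follows essentially the same route as the paper: uniqueness from Theorem \ref{thm2.4}, reduction of \eqref{3.9} to the decoupled system \eqref{3.10} via the Helmholtz decomposition and Corollary \ref{cor2.3}, the split $G=\wid G+U$ with explicit Fourier/Hankel representations, and verification of the radiation condition through the steepest-descent asymptotics of Theorems \ref{thm3.7} and \ref{thm3.8}. The delicate points you flag (the two-way nature of the equivalence, reconstruction of the Dirac source, convergence of the Fourier integrals) are indeed left implicit in the paper, so identifying them is a useful refinement rather than a deviation.
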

\begin{remark}\label{rem3.11}
	Still, we have $\mathbf{G}(\mathbf{x},\mathbf{y})=\mathbf{G}(\mathbf{y},\mathbf{x})^\top$ for $\mathbf{x},\mathbf{y}\in\R^3_\pm$. 
\end{remark}

\section{Existence of solution}\label{sec4}
\setcounter{equation}{0}
In this section, we prove the existence of solutions to problem \eqref{1.1}--\eqref{1.3} when $a_0=1$. Following the arguments in \cite{GXY18,GGT18}, we propose a variational formulation in $B_R$ coupled with a Dirichlet-to-Neumann map derived from the integral representation of the scattered field and transmitted field in $\R^d\setminus\ov B_R$. The variational formulation is then shown to be of Fredholm type, and thus the existence of solutions follows from the uniqueness, thereby achieving the first well-posedness result for elastic scattering by rough interfaces. 

In this section, by Remark \ref{rem2.2}, without loss of genearlity, we always set $\tilde{\mu}=\mu(\lambda+\mu)/(\lambda+3\mu)$ and $\tilde{\lambda}=(\lambda+\mu)(\lambda+2\mu)/(\lambda+3\mu)$. Denote by $\boldsymbol{\Pi}_\pm(\mathbf{x},\mathbf{y})$ the free space Green's tensor for the Navier equation $(\Delta^*+\rho_\pm\om^2)\mathbf{u}=0$, which is given by 
$$
  \boldsymbol{\Pi}_\pm(\mathbf{x},\mathbf{y})=\frac{1}{\mu}\Phi_{k_{s,\pm}}(\mathbf{x},\mathbf{y})\mathbf{I}+\frac{1}{\rho_\pm\om^2}\nabla_{\mathbf{x}}\nabla_{\mathbf{x}}^\top(\Phi_{k_{s,\pm}}(\mathbf{x},\mathbf{y})-\Phi_{k_{p,\pm}}(\mathbf{x},\mathbf{y})). 
$$
We refer to \cite{TA01,PH98} for some basic properties of $\boldsymbol{\Pi}_\pm$. From the definition and the explicit expression of the two-layered Green's tensor $\mathbf{G}$, it is noted that $\mathbf{G}(\mathbf{x},\mathbf{y})-\boldsymbol{\Pi}_\pm(\mathbf{x},\mathbf{y})\in [C^1(\R^d\times\R^d)]^{d\times d}$. 

We shall consider the point source incident wave $\mathbf{u}^{in}(\mathbf{x},\mathbf{z},\mathbf{a})=\boldsymbol{\Pi}_+(\mathbf{x},\mathbf{z})\mathbf{a}$ with $\mathbf{z}\in D^+$ and $\mathbf{a}\in\C^d$. In the following, we choose $R>0$ sufficiently large such that $\Gamma$ is flat outside $B_{R/2}$ and $\mathbf{z}\notin\pa B_R$. First we introduce the reflected wave $\mathbf{u}^{re}_\pm(\mathbf{x},\mathbf{z},\mathbf{a})$, which is the scattered field corresponding to the unperturbed problem \eqref{1.1}--\eqref{1.3}, i.e., $\Gamma=\Gamma_0$. Further, we define the reference wave $\mathbf{u}^{0}(\mathbf{x},\mathbf{z},\mathbf{a})$ by 
\begin{align*}
\mathbf{u}^{0}(\mathbf{x},\mathbf{z},\mathbf{a}):=
\left\{
\begin{array}{ll}
	\mathbf{u}^{in}(\mathbf{x},\mathbf{z},\mathbf{a})+\mathbf{u}^{re}_+(\mathbf{x},\mathbf{z},\mathbf{a}),\;&\mathbf{x}\in\R^d_+,\\ [1mm]
	\mathbf{u}^{re}_-(\mathbf{x},\mathbf{z},\mathbf{a}),\;&\mathbf{x}\in\R^d_-. 
\end{array}
\right.
\end{align*}
In view of the uniqueness of the direct problem, it is easily deduced that $\mathbf{u}^{0}(\mathbf{x},\mathbf{z},\mathbf{a})=\mathbf{G}(\mathbf{x},\mathbf{z})\mathbf{a}$ if $\mathbf{z}\in D^+\cap\R^d_+$, and $\mathbf{u}^{0}(\mathbf{x},\mathbf{z},\mathbf{a})=0$ if $\mathbf{z}\in D^+\setminus\R^d_+$. 

Now we split the scattered wave $\mathbf{u}_\pm$ into two parts in $\R^d\setminus B_R$. Define $\mathbf{\wid u}(\mathbf{x},\mathbf{z},\mathbf{a}):=\mathbf{u}_\pm(\mathbf{x},\mathbf{z},\mathbf{a})-\mathbf{u}^{re}_\pm(\mathbf{x},\mathbf{z},\mathbf{a})$ in $\ov\R^d_\pm\setminus B_R$. It is seen that $[\mathbf{\wid u}(\cdot,\mathbf{z},\mathbf{a})]=[\mathbf{P}_{\tilde{\mu},\tilde{\lambda}}\mathbf{\wid u}(\cdot,\mathbf{z},\mathbf{a})]=0$ on $\Gamma\setminus\ov B_R$. By applying the Betti formula and Theorems \ref{thm2.4a} and \ref{thm2.5a}, similarily as in \cite{GXY18,PH98,VD79}, it is easy to derive the Green's representation formula for $\mathbf{\wid u}$: 
\begin{align}\label{4.1}
  \mathbf{\wid u}(\mathbf{x},\mathbf{z},\mathbf{a})=\int_{\pa B_R}\left(\boldsymbol{\Pi}^{(2)}_{\tilde{\mu},\tilde{\lambda}}(\mathbf{x},\mathbf{y})\mathbf{\wid u}(\mathbf{y},\mathbf{z},\mathbf{a})-\mathbf{G}(\mathbf{x},\mathbf{y})\mathbf{P}_{\tilde{\mu},\tilde{\lambda}}^{(\mathbf{y})}\mathbf{\wid u}(\mathbf{y},\mathbf{z},\mathbf{a})\right)ds(\mathbf{y}),~~~
\end{align}
where $\mathbf{x}\in\R^d\setminus\ov B_R$ and  $(\boldsymbol{\Pi}^{(2)}_{\tilde{\mu},\tilde{\lambda}})_{jk}(\mathbf{x},\mathbf{y}):=(\mathbf{P}_{\tilde{\mu},\tilde{\lambda}}^{(\mathbf{y})}(\mathbf{G}_{j\cdot}(\mathbf{x},\mathbf{y}))^\top)_k$ with $\mathbf{G}=(\mathbf{G}_{jk})$. Taking the limit $\mathbf{x}\rightarrow\pa B_R$ in \eqref{4.1} and setting $\mathbf{p}=\mathbf{P}_{\tilde{\mu},\tilde{\lambda}}\mathbf{\wid u}|_{\pa B_R}$, we obtain that 
\begin{align}\label{4.2}
  (\frac{1}{2}\mathcal{I}-\mathcal{K})(\mathbf{\wid u}|_{\pa B_R})+\mathcal{S}\mathbf{p}=0~~~{\text on}~\pa B_R. 
\end{align}
Here $\mathcal{I}$ is the identity operator, $\mathcal{K}$ and $\mathcal{S}$ are the boundary integral operators over $\pa B_R$ defined by 
\begin{align*}
 &( \mathcal{K}\mathbf{g})(\mathbf{x}):=\int_{\pa B_R}\boldsymbol{\Pi}_{\tilde{\mu},\tilde{\lambda}}^{(2)}(\mathbf{x},\mathbf{y})\mathbf{g}(\mathbf{y})ds(\mathbf{y}), \\ 
 &(\mathcal{S}\mathbf{g})(\mathbf{x}):=\int_{\pa B_R}\mathbf{G}(\mathbf{x},\mathbf{y})\mathbf{g}(\mathbf{y})ds(\mathbf{y}). 
\end{align*}
We note that the mapping properties and the jump relations of these operators remain valid due to the smoothness of $\mathbf{G}-\boldsymbol{\Pi}_\pm$ and our special choice for $\tilde{\mu}$ and $\tilde{\lambda}$ (see details in \cite{GXY18,WM10,VD79}). Let 
\begin{align*}
\mathbf{\hat u}(\mathbf{x},\mathbf{z},\mathbf{a}):=
\left\{
\begin{array}{ll}
	\mathbf{u}_+(\mathbf{x},\mathbf{z},\mathbf{a}),\;&\mathbf{x}\in B_R\cap D^+,\\ [1mm]
	\mathbf{u}_-(\mathbf{x},\mathbf{z},\mathbf{a})-\mathbf{u}^{in}(\mathbf{x},\mathbf{z},\mathbf{a}),\;&\mathbf{x}\in B_R\cap D^-. 
\end{array}
\right.
\end{align*}
Utilizing the Betti formula for $\mathbf{\hat u}$ in $B_R$, we then obtain the equivalent variational formualtion of problem \eqref{1.1}--\eqref{1.3}:  find $(\mathbf{\hat u},\mathbf{p})\in [H^1(B_R)]^d\times [H^{-1/2}(\pa B_R)]^d:=\mathbf{X}$ such that 
\begin{align}\label{4.3}
\mathbb{B}((\mathbf{\hat u},\mathbf{p}),(\boldsymbol{\varphi},\boldsymbol{\chi}))=
\left(
\begin{array}{l}
	b_1((\mathbf{\hat u},\mathbf{p}),(\boldsymbol{\varphi},\boldsymbol{\chi})) \\
	b_2((\mathbf{\hat u},\mathbf{p}),(\boldsymbol{\varphi},\boldsymbol{\chi})) 
\end{array}
\right)
=
\left(
\begin{array}{l}
	L_1(\boldsymbol{\varphi},\boldsymbol{\chi}) \\
	L_2(\boldsymbol{\varphi},\boldsymbol{\chi}) 
\end{array}
\right)
\end{align}
for all $(\boldsymbol{\varphi},\boldsymbol{\chi})\in\mathbf{X}$, where 
\begin{align*}
  &b_1((\mathbf{\hat u},\mathbf{p}),(\boldsymbol{\varphi},\boldsymbol{\chi})):=\int_{B_R}\left(\mathcal{E}_{\tilde{\mu},\tilde{\lambda}}(\mathbf{\hat u},\ov{\boldsymbol{\varphi}})-\rho\om^2\mathbf{\hat u}\cdot\ov{\boldsymbol{\varphi}}\right)d\mathbf{x}-\int_{\pa B_R}\mathbf{p}\cdot\ov{\boldsymbol{\varphi}}ds, \\
  &b_2((\mathbf{\hat u},\mathbf{p}),(\boldsymbol{\varphi},\boldsymbol{\chi})):=\int_{\pa B_R}\left((\frac{1}{2}\mathcal{I}-\mathcal{K})(\mathbf{\hat u}|_{\pa B_R})+\mathcal{S}\mathbf{p}\right)\cdot\ov{\boldsymbol{\chi}}ds, \\
  &L_1(\boldsymbol{\varphi},\boldsymbol{\chi}):=-\int_{B_R\cap D^-}\left(\mathcal{E}_{\tilde{\mu},\tilde{\lambda}}(\mathbf{u}^{in},\ov{\boldsymbol{\varphi}})-\rho_-\om^2\mathbf{u}^{in}\cdot\ov{\boldsymbol{\varphi}}\right)d\mathbf{x}+\int_{\pa(B_R\cap D^-)}\mathbf{P}_{\tilde{\mu},\tilde{\lambda}}\mathbf{u}^{in}\cdot\ov{\boldsymbol{\varphi}}ds \\
  &\qquad\qquad\qquad+\int_{\pa B_R^+}\mathbf{P}_{\tilde{\mu},\tilde{\lambda}}\mathbf{u}^{re}_+\cdot\ov{\boldsymbol{\varphi}}ds+\int_{\pa B_R^-}\mathbf{P}_{\tilde{\mu},\tilde{\lambda}}\mathbf{u}^{re}_-\cdot\ov{\boldsymbol{\varphi}}ds \\
  &L_2(\boldsymbol{\varphi},\boldsymbol{\chi}):=\int_{\pa B_R}\left((\frac{1}{2}\mathcal{I}-\mathcal{K})(\mathbf{u}^{re}_+|_{\pa B_R^+}+\mathbf{u}^{re}_-|_{\pa B_R^-}-\mathbf{u}^{in}|_{\pa B_R^-})\right)\cdot\ov{\boldsymbol{\chi}}ds, 
\end{align*}
and 
\begin{align*}
  \mathcal{E}_{\tilde{\mu},\tilde{\lambda}}(\mathbf{\hat u},\ov{\boldsymbol{\varphi}})=
  \left\{
  \begin{array}{ll}
  	(\mu+\tilde{\mu})\nabla\mathbf{\hat u}\cdot\na\ov{\boldsymbol{\varphi}}+\tilde{\lambda}\dive\mathbf{\hat u}\dive\ov{\boldsymbol{\varphi}}-\tilde{\mu}\dive^\perp\mathbf{\hat u}\dive^\perp\ov{\boldsymbol{\varphi}},~~~&{\rm if}~d=2,\\
  	(\mu+\tilde{\mu})\nabla\mathbf{\hat u}\cdot\na\ov{\boldsymbol{\varphi}}+\tilde{\lambda}\dive\mathbf{\hat u}\dive\ov{\boldsymbol{\varphi}}-\tilde{\mu}\curl\mathbf{\hat u}\cdot\curl\ov{\boldsymbol{\varphi}},~~~&{\rm if}~d=3. 
  \end{array}
  \right.
\end{align*}

Denote by $(\cdot,\cdot)$ the duality between $[H^1(B_R)]^d$ and $[H^{-1}(B_R)]^d$, and by $\langle\cdot,\cdot\rangle$ the duality between $[H^{1/2}(\pa B_R)]^d$ and $[H^{-1/2}(\pa B_R)]^d$. From the Riesz representation theorem, there exist linear bounded operators 
  \begin{align*}
  	T_1,J_1:&\;[H^1(B_R)]^d\rightarrow[H^{-1}(B_R)]^d, \\
  T_2:&\;[H^{-1/2}(\pa B_R)]^d\rightarrow[H^{-1}(B_R)]^d, \\
  T_3:&\;[H^{-1/2}(\pa B_R)]^d\rightarrow[H^{1/2}(\pa B_R)]^d, \\
  J_2:&\;[H^1(B_R)]^d\rightarrow[H^{1/2}(\pa B_R)]^d, 
  \end{align*}
such that for $(\mathbf{\hat u},\mathbf{p}),(\boldsymbol{\varphi},\boldsymbol{\chi})\in\mathbf{X}$, 
  \begin{align*}
  	(T_1\mathbf{\hat u},\boldsymbol{\varphi})&:=\frac{1}{2}\int_{B_R}\left(\mathcal{E}_{\tilde{\mu},\tilde{\lambda}}(\mathbf{\hat u},\ov{\boldsymbol{\varphi}})+\rho\om^2\mathbf{\hat u}\cdot\ov{\boldsymbol{\varphi}}\right)d\mathbf{x}, \\
  (J_1\mathbf{\hat u},\boldsymbol{\varphi})&:=-\int_{B_R}\rho\om^2\mathbf{\hat u}\cdot\ov{\boldsymbol{\varphi}}d\mathbf{x}, \\
  \langle T_2\mathbf{p},\boldsymbol{\varphi}\rangle&:=\frac{1}{2}\int_{\pa B_R}\mathbf{p}\cdot\ov{\boldsymbol{\varphi}}ds, \\
  \langle T_3\mathbf{p},\boldsymbol{\chi}\rangle&:=\int_{\pa B_R}\mathcal{S}\mathbf{p}\cdot\ov{\boldsymbol{\chi}}ds, \\
  \langle J_2\mathbf{\hat u},\boldsymbol{\chi}\rangle&:=-\int_{\pa B_R}\mathcal{K}(\mathbf{\hat u}|_{\pa B_R})\cdot\ov{\boldsymbol{\chi}}ds. 
  \end{align*}
Therefore, we may rewrite $\mathbb{B}:\mathbf{X}\times\mathbf{X}\rightarrow\C^2$ as 
$$
  \mathbb{B}((\mathbf{\hat u},\mathbf{p}),(\boldsymbol{\varphi},\boldsymbol{\chi}))=\langle\mathbb{B}_1((\mathbf{\hat u},\mathbf{p}),(\boldsymbol{\varphi},\boldsymbol{\chi}))\rangle+\langle\mathbb{B}_2((\mathbf{\hat u},\mathbf{p}),(\boldsymbol{\varphi},\boldsymbol{\chi}))\rangle, 
$$
where $\langle\cdot,\cdot\rangle$ denotes the duality bwtween $\mathbf{X}$ and $\mathbf{X}'$, and the operators $\mathbb{B}_j:\mathbf{X}\rightarrow\mathbf{X}'$, $j=1,2$ are defined as 
\begin{align*}
\mathbb{B}_1:=
\left(
\begin{array}{cc}
	T_1 & -T_2 \\
	T_2^* & T_3 
\end{array}
\right)
,~~~\mathbb{B}_2:=
\left(
\begin{array}{cc}
	J_1 & 0 \\
	J_2 & 0 
\end{array}
\right). 
\end{align*}
To show the existence of solutions, it then suffices to prove that $\mathbb{B}$ is of Fredholm type since the uniqueness follows from Theorem \ref{thm2.4}. By \cite[Lemma 3.2]{GXY18}, $T_1$ is coercive over $[H^1(B_R)]^d$. Combining the regularity of $\mathbf{G}-\boldsymbol{\Pi}_\pm$ and \cite[Theorem 7.6]{WM10}, it can be derived that $T_3$ is a strongly elliptic operator over $[H^{-1/2}(\pa B_R)]^d$, which implies that the real part of $\mathbb{B}_1$, given by 
\begin{align*}
\Rt\mathbb{B}_1:=\frac{\mathbb{B}_1+\mathbb{B}_1^*}{2}=
\left(
\begin{array}{cc}
	T_1 &  \\
	  & T_3 
\end{array}
\right), 
\end{align*}
is strongly elliptic over $\mathbf{X}$. Moreover, by the Sobolev embedding, $J_1$ is compact. Since from our choice for $\tilde{\mu}$ and $\tilde{\lambda}$, the operator $\mathcal{K}:[H^{1/2}(\pa B_R)]^d\rightarrow[H^{1/2}(\pa B_R)]^d$ is compact, which indicates that $J_2$ and thus $\mathbb{B}_2$ is compact. Hence, the operator $\mathbb{B}$ is Fredholm with index zero. In summary, we obtain the following result: 
\begin{theorem}\label{thm4.1}
	For any $\mathbf{z}\in D^+$ and $\mathbf{a}\in\C^d$, problem \eqref{1.1}--\eqref{1.3} with $a_0=1$ admits a unique solution $(\mathbf{u}_+,\mathbf{u}_-)\in H^1(B_R\cap D^+)\times H^1(B_R\cap D^-)$ such that $\mathbf{u}_+=\mathbf{\hat u}$ in $B_R\cap D^+$ and $\mathbf{u}_-=\mathbf{\hat u}+\mathbf{u}^{in}$ in $B_R\cap D^-$. 
\end{theorem}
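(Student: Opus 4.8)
The plan is to recast problem \eqref{1.1}--\eqref{1.3} with $a_0=1$ as the variational problem \eqref{4.3} on the bounded domain $B_R$ coupled with the Dirichlet-to-Neumann relation \eqref{4.2}, to show that the associated operator $\mathbb{B}$ on $\mathbf{X}=[H^1(B_R)]^d\times[H^{-1/2}(\pa B_R)]^d$ is Fredholm of index zero, and then to invoke the uniqueness of Theorem \ref{thm2.4} so that existence follows from the Fredholm alternative; the identification of $(\mathbf{u}_+,\mathbf{u}_-)$ with $\mathbf{\hat u}$ is built into the construction.

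First I would establish the equivalence in both directions. Given a solution $(\mathbf{u}_+,\mathbf{u}_-)$, set $\mathbf{\hat u}$ as in the statement and $\mathbf{p}=\mathbf{P}_{\tilde\mu,\tilde\lambda}\mathbf{\wid u}|_{\pa B_R}$ with $\mathbf{\wid u}=\mathbf{u}_\pm-\mathbf{u}^{re}_\pm$ in $\ov\R^d_\pm\setminus B_R$. Applying the Betti formula to $\mathbf{\hat u}$ in $B_R\cap D^\pm$ and adding, the interface integral over $\Gamma\cap B_R$ cancels by the transmission conditions \eqref{1.2} together with the corresponding identities for the reference wave $\mathbf{u}^0$, while the $\pa B_R$ boundary term produces the unknown traction $\mathbf{p}$; this yields $b_1=L_1$ for all $\boldsymbol{\varphi}$. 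The second equation $b_2=L_2$ is exactly \eqref{4.2}, obtained by letting $\mathbf{x}\to\pa B_R$ in the Green's representation \eqref{4.1}; here one uses that $\mathbf{\wid u}$ satisfies the homogeneous transmission conditions on $\Gamma\setminus\ov B_R$ and is radiating, and that the limit identities of Theorems \ref{thm2.4a}--\ref{thm2.5a} make the contribution at infinity in the Betti identity vanish, so that only the integral over $\pa B_R$ survives. Conversely, from a solution $(\mathbf{\hat u},\mathbf{p})\in\mathbf{X}$ of \eqref{4.3} one recovers $\mathbf{u}_\pm$ in $B_R$ and extends them to $\R^d\setminus\ov B_R$ by the right-hand side of \eqref{4.1}; the mapping and jump properties of $\mathcal{S},\mathcal{K}$ — available because $\mathbf{G}-\boldsymbol{\Pi}_\pm\in[C^1(\R^d\times\R^d)]^{d\times d}$ and because of our special choice of $\tilde\mu,\tilde\lambda$ — guarantee that the extension matches $\mathbf{\hat u}$ across $\pa B_R$ in $H^1$, solves the Navier equation in $\R^d_\pm\setminus\ov B_R$, fulfils the transmission conditions on $\Gamma$, and inherits the Kupradze radiation condition \eqref{1.3} from $\mathbf{G}$.

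Next I would analyse $\mathbb{B}=\mathbb{B}_1+\mathbb{B}_2$. Its Hermitian part reduces to $\Rt\mathbb{B}_1=\mathrm{diag}(T_1,T_3)$: by \cite[Lemma 3.2]{GXY18} the energy form $T_1$ built from $\mathcal{E}_{\tilde\mu,\tilde\lambda}$ with our fixed $\tilde\mu,\tilde\lambda$ is coercive on $[H^1(B_R)]^d$, and by the smoothness of $\mathbf{G}-\boldsymbol{\Pi}_\pm$ together with \cite[Theorem 7.6]{WM10} the single-layer operator $T_3$ is strongly elliptic on $[H^{-1/2}(\pa B_R)]^d$, so $\Rt\mathbb{B}_1$ is strongly elliptic on $\mathbf{X}$. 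For the remainder, $J_1$ is compact by the compact embedding $H^1(B_R)\hookrightarrow L^2(B_R)$, while $J_2$ is compact because the special value of $\tilde\mu,\tilde\lambda$ renders the double-layer kernel $\boldsymbol{\Pi}^{(2)}_{\tilde\mu,\tilde\lambda}$, up to a $C^1$ correction coming from $\mathbf{G}-\boldsymbol{\Pi}_\pm$, weakly singular, so $\mathcal{K}\colon[H^{1/2}(\pa B_R)]^d\to[H^{1/2}(\pa B_R)]^d$ is compact; hence $\mathbb{B}_2$ is compact. Therefore $\mathbb{B}=\Rt\mathbb{B}_1+\bigl(i\,\I\,\mathbb{B}_1+\mathbb{B}_2\bigr)$ is strongly elliptic plus compact, thus Fredholm with index zero on $\mathbf{X}$.

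Finally, Theorem \ref{thm2.4} gives that the homogeneous problem has only the trivial solution, hence so does the homogeneous variational problem via the equivalence; the Fredholm alternative then yields a unique $(\mathbf{\hat u},\mathbf{p})\in\mathbf{X}$ solving \eqref{4.3}, and transporting this back through the equivalence produces the unique $(\mathbf{u}_+,\mathbf{u}_-)\in H^1(B_R\cap D^+)\times H^1(B_R\cap D^-)$ with $\mathbf{u}_+=\mathbf{\hat u}$ in $B_R\cap D^+$ and $\mathbf{u}_-=\mathbf{\hat u}+\mathbf{u}^{in}$ in $B_R\cap D^-$. I expect the main obstacle to be the equivalence step: one must justify carefully that the integral-representation DtN coupling on $\pa B_R$ is well defined and exactly encodes both the radiation condition and the transmission conditions off $B_R$, which rests on the mapping and jump relations of $\mathcal{S},\mathcal{K}$ for the two-layered Green's tensor, and these in turn rely on the decomposition $\mathbf{G}=\boldsymbol{\Pi}_\pm+(\text{smooth})$ drawn from the explicit formulas of Section \ref{sec3}, on the limit identities of Theorems \ref{thm2.4a}--\ref{thm2.5a}, and on the choice of $\tilde\mu,\tilde\lambda$ that removes the strong singularity of the double layer.
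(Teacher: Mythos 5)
Your proposal is correct and follows essentially the same route as the paper: the variational reformulation \eqref{4.3} with the DtN coupling \eqref{4.2}, the splitting $\mathbb{B}=\mathbb{B}_1+\mathbb{B}_2$ with $\Rt\mathbb{B}_1=\mathrm{diag}(T_1,T_3)$ strongly elliptic (via \cite[Lemma 3.2]{GXY18} and \cite[Theorem 7.6]{WM10}) and $\mathbb{B}_2$ compact thanks to the special choice of $\tilde\mu,\tilde\lambda$ and the smoothness of $\mathbf{G}-\boldsymbol{\Pi}_\pm$, followed by the Fredholm alternative combined with the uniqueness of Theorem \ref{thm2.4}. Your additional discussion of the two-way equivalence between \eqref{1.1}--\eqref{1.3} and \eqref{4.3} is consistent with, and somewhat more explicit than, what the paper records.
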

%\begin{remark}\label{rem4.2}
%	If the Green's tensor for problem\eqref{1.1}--\eqref{1.3} with $a_0\neq1$ is computed, then the corresponding existence of solutions can be proved analogously as in this section. 
%\end{remark}

\section*{Acknowledgements}
This work was supported by the NNSF of China with Grants 12326607 and 1257010856. 
Our manuscript has no associated data, and there is no conflict of interest between authors. 
%We thank the editor and anonymous reviewers for their constructive suggestions on improving the presentation of this paper.

\end{document}